\documentclass[12pt]{article}
\usepackage[final]{showkeys}
\usepackage[obeyspaces,hyphens,spaces]{url}
\renewcommand*\showkeyslabelformat[1]{%
\fbox{\parbox[t]{1.4 cm}{\raggedright\normalfont\small\url{#1}}}}
\usepackage{amsthm,amsmath,amsfonts,mathtools,wasysym,empheq}
\usepackage{amssymb} %comment out if use mtpro2
\usepackage{mathpazo}
\usepackage[mathscr]{eucal}
\usepackage[top=2cm,bottom=2cm,left=2cm,right=2cm]{geometry}
\usepackage[title]{appendix}
\usepackage[table, dvipsnames]{xcolor} %
\usepackage[labelsep=space]{caption}  % or period
\usepackage{graphicx, soul}
\usepackage{float,booktabs,multirow}
\restylefloat{table*}
\definecolor{labelkey}{rgb}{.1,.1,.8}
\definecolor{refkey}{rgb}{0,0.6,0.0}

\usepackage[shortlabels]{enumitem}
\setlist[enumerate]{itemsep=0.5pt,topsep=1pt}
\setlist[itemize]{itemsep=0.5pt,topsep=1pt}
\definecolor{dgreen}{rgb}{0.00,0.49,0.00}
\definecolor{dblue}{rgb}{0,0.08,0.75}
\colorlet{myblue}{dblue}
\colorlet{mygreen}{dgreen}
\definecolor{myfirstblue}{rgb}{.8, .8, 1}

\usepackage{hyperref}
\hypersetup{
colorlinks= true,
urlcolor = magenta,
citecolor = mygreen,
linkcolor = myblue,
}

\allowdisplaybreaks
\usepackage[capitalize]{cleveref} %nameinlink for color in name
\crefdefaultlabelformat{#2\textup{#1}#3}
\crefrangeformat{enumi}{#3\textup{#1}#4\textup{\textendash}#5\textup{#2}#6}
\crefname{equation}{}{}

\crefname{chapter}{Appendix}{chapters}
\crefname{item}{}{items}
\crefname{figure}{Figure}{Figures}
\crefname{theorem}{\protect\theoremname}{Theorems}
\crefname{lemma}{\protect\lemmaname}{Lemmas}
\crefname{proposition}{\protect\propositionname}{Propositions}
\crefname{corollary}{\protect\corollaryname}{\protect\corollaryname}
\crefname{definition}{\protect\definitionname}{\protect\definitionname}
\crefname{fact}{\protect\factname}{\protect\factname}
\crefname{example}{\protect\examplename}{Examples}
\crefname{algorithm}{Algorithm}{Algorithms}
\crefname{remark}{\protect\remarkname}{\protect\remarkname}
\crefname{case}{\protect\casename}{\protect\casename}
\crefname{question}{\protect\questionname}{\protect\questionname}
\crefname{claim}{\protect\claimname}{\protect\claimname}
\crefname{enumi}{}{}
\crefname{appsec}{Appendix}{Appendices}

\makeatletter
\g@addto@macro\normalsize{%
  \setlength\abovedisplayskip{6pt}
  \setlength\belowdisplayskip{6pt}
  \setlength\abovedisplayshortskip{6pt}
  \setlength\belowdisplayshortskip{6pt}
}
\let\orgdescriptionlabel\descriptionlabel
\renewcommand*{\descriptionlabel}[1]{%
	\let\orglabel\label
	\let\label\@gobble
	\phantomsection
	\edef\@currentlabel{#1}%
	\let\label\orglabel
	\orgdescriptionlabel{#1}%
}

\let\leq\leqslant
\let\geq\geqslant

\def\th@plain{%
	\thm@notefont{} % same as heading font
	\itshape % body font
}
\def\th@definition{%
	\thm@notefont{}% same as heading font
	\normalfont % body font
}
\g@addto@macro\th@remark{\thm@headpunct{}}
\g@addto@macro\th@definition{\thm@headpunct{}}
\g@addto@macro\th@plain{\thm@headpunct{}}
\makeatother
\theoremstyle{plain}
\newtheorem{theorem}{\protect\theoremname}[section]
\newtheorem{corollary}[theorem]{\protect\corollaryname}
\newtheorem{lemma}[theorem]{\protect\lemmaname}

\theoremstyle{definition}
\newtheorem{remark}[theorem]{\protect\remarkname}
\newtheorem{definition}[theorem]{\protect\definitionname}
\newtheorem{example}[theorem]{\protect\examplename}

\newtheorem{fact}[theorem]{\protect\factname}

\newtheorem*{openprob}{Open Problem}
\providecommand{\theoremname}{Theorem}
\providecommand{\propositionname}{Proposition}
\providecommand{\corollaryname}{Corollary}
\providecommand{\factname}{Fact}
\providecommand{\lemmaname}{Lemma}
\providecommand{\assumptionname}{Assumption}
\providecommand{\algorithmname}{Algorithm}
\providecommand{\definitionname}{Definition}
\providecommand{\notationname}{Notation}
\providecommand{\remarkname}{Remark}
\providecommand{\examplename}{Example}
\providecommand{\claimname}{Claim}
\providecommand{\algorithmname}{Algorithm}

\let\originalleft\left
\let\originalright\right
\renewcommand{\left}{\mathopen{}\mathclose\bgroup\originalleft}
\renewcommand{\right}{\aftergroup\egroup\originalright}
\DeclarePairedDelimiterX\menge[2]{ \{ }{ \} }{ {#1} ~ \delimsize \vert ~ \mathopen{}  {#2} }  % set	
\DeclarePairedDelimiterX\fa[2]{ ( }{ )_{#2} }{#1}  % indexed family of vectors
\DeclarePairedDelimiterX\set[2]{ \{ }{ \}_{#2} }{#1}  % for indexed set

\newcommand{\scal}[2]{\langle{{#1},{#2}}\rangle}

\newcommand{\barx}{\ensuremath{\overline{x}}}

\newcommand{\HH}{\ensuremath{\mathcal H}}
\newcommand{\RR}{\ensuremath{\mathbb R}}
\newcommand{\NN}{\ensuremath{\mathbb N}}
\newcommand{\EXR}{\ensuremath{ \left] \minf, \pinf \right] }}
\newcommand{\CVF}{\ensuremath{\Gamma_{0}(\HH)}}
\newcommand{\BB}{\ensuremath{\mathbb B}}

\newcommand{\minf}{\ensuremath{ {-}\infty}}
\newcommand{\pinf}{\ensuremath{ {+}\infty}}
\newcommand{\RX}{\ensuremath{ \left] \minf, \pinf \right] }}

\DeclareMathOperator*{\argmin}{argmin}

\newcommand{\closu}[1]{\ensuremath{\overline{#1} }}
\newcommand{\inte}{\ensuremath{\operatorname{int}}}
\newcommand{\reli}{\ensuremath{\operatorname{ri}}}

\newcommand{\conv}{\ensuremath{\operatorname{conv}}}
\newcommand{\aff}{\ensuremath{\operatorname{aff}}}
\newcommand{\ran}{\ensuremath{\operatorname{ran}}}
\newcommand{\bdry}{\ensuremath{\operatorname{bdry}}}
\newcommand{\dom}{\ensuremath{\operatorname{dom}}}
\newcommand{\jj}{\ensuremath{\operatorname{q}}}

\newcommand{\Id}{\ensuremath{\operatorname{Id}}}
\newcommand{\prox}[1]{\ensuremath{\mathop{\operatorname{Prox}_{#1}}}}
\newcommand{\gra}{\ensuremath{\operatorname{gra}}}

\let\subset\subseteq
\newcommand{\email}[1]{\href{mailto:#1}{\nolinkurl{#1}}} % for email		
\let\oldFootnote\footnote
\newcommand\nextToken\relax
\renewcommand\footnote[1]{%
    \oldFootnote{#1}\futurelet\nextToken\isFootnote}
\newcommand\isFootnote{%
    \ifx\footnote\nextToken\textsuperscript{,}\fi}
\begin{document}

\title{\sffamily
%More on Characterizations of
On Characterizations of
(Almost) Strictly Convex Functions\footnote{Dedicated to Simeon Reich on the occasion of his 80th birthday.}
%Strictly and Almost Strictly Convex Functions
}

\author{
Heinz H. Bauschke\thanks{
Department of Mathematics, University of British Columbia, Kelowna, B.C.\ V1V~1V7, Canada.
Email: \email{heinz.bauschke@ubc.ca}.}, ~
Honglin Luo\thanks{
{School of Mathematical Sciences, Chongqing Normal University, Chongqing, PRC. Email: \email{071025013@fudan.edu.cn}.}},~
and Xianfu Wang\thanks{
Department of Mathematics, University of British Columbia, Kelowna, B.C.\ V1V~1V7, Canada.
Email: \email{shawn.wang@ubc.ca}.}
}

\date{May 4, 2026}

\maketitle

\begin{abstract}
\noindent

In this paper, we unify and improve existing results on characterizing strict and almost stricty convex functions
via subdifferential mapping, Moreau envelope, and proximal mappings.
In particular, it is shown that if a convex function is subdifferentiable on its domain, then it is strictly convex
if and only if its subdifferential is strictly monotone, equivalently, almost strictly monotone.
Rockafellar-Wets' characterizations of almost strictly convex functions via almost differentiability of
Fenchel
conjugates and strict monotonicity of subdifferentials are extended from a
finite-dimensional space to a Hilbert space.
We also establish similar results for paramonotone operators.
\end{abstract}

{\small
\noindent
{\bfseries 2022 Mathematics Subject Classification:}
{Primary
52A41,
49H05, 49H09; %mathematical programming methods
Secondary
26B25, 90C25
}

\noindent {\bfseries Keywords:}
Almost strictly convex function, almost strictly monotone operator,
convex function, Fenchel conjugate, maximally monotone operator,
paramonotone operator, strictly convex function,
strictly monotone operator, subdifferential.
}

%introduction
\section{Introduction}
Throughout, $\HH$ is a real Hilbert space with inner product defined by
$\scal{x}{y}$ and induced norm $\|x\|:=\sqrt{\scal{x}{x}}$ for
$x,y\in\HH$.
%  $\EXR$ denotes $\left[-\infty,+\infty\right]$.
Strictly convex and almost strictly convex functions are important in optimization since they have
a unique global minimizer, if any. However, in the literature, results on
 strictly and almost strictly convex functions in
a general Hilbert space are sporadic; see, e.g., \cite{bb-combettes,bglw08,borwein06,Rock70,RW,volle12}.
We believe that it is very interesting to give a systematic analysis: review old results with new proofs and insights,
and provide some new results.
Let us remark that almost strictly convex functions are also known as
essentially strictly convex functions in the literature; see, e.g., \cite{bb-combettes,borwein06,Rock70}.
Our goal here is threefold. First, we consider what conditions are necessary and sufficient for characterizing
strictly convex functions and almost strictly convex functions in terms of subdifferential mappings, proximal mappings,
and Moreau envelopes. Second,
we extend the results on subdifferential mappings to paramontone operators which are maximally monotone.
Third, when characterizing an almost strictly convex function via its Fenchel conjugate, we highlight that
while in finite-dimensional spaces its conjugate has a nonempty interior domain for free, in
infinite-dimensional spaces it is a price to pay.

Let $f:\HH\rightarrow\RX$ be a proper, lsc, and convex function with subdifferential $\partial f$ and Fenchel
conjugate $f^*$. Consider the following properties:
\begin{enumerate}
\item\label{i:f:strict} $f$ is strictly convex.
\item \label{i:sub:smono} $\partial f$ is strictly monotone.
\item \label{i:sub:mono:almost} $\partial f$ is strictly monotone on convex subsets of $\dom\partial f$.
\item \label{i:f:strict:almost} $f$ is almost strictly convex.
\item \label{i:conjugate} $f^*$ is almost differentiable.
\end{enumerate}

In this paper, we show that \ref{i:f:strict}$\Rightarrow$\ref{i:sub:smono},
but the converse fails in $\RR^2$; and that
%$\Rightarrow$ \ref{i:sub:mono:almost}, but the converse fails; and that
\ref{i:sub:smono}$\Leftrightarrow$\ref{i:sub:mono:almost}$\Leftrightarrow$
\ref{i:f:strict:almost}$\Leftrightarrow$\ref{i:conjugate}.
This sheds a new light on: while strict convexity and almost strict convexity differ for convex functions (well-known),
strict monotonicity and almost strict monotonicity are the same for subdifferentials (new).
Our investigation relies on the paramonotonicity, introduced
by Censor, Isuem, and Zenios \cite{censor,iusem}.
Moreover, we show that a convex function is almost strictly
convex if and only if its subdifferential is strictly monotone in
a Hilbert space, which extends a finite-dimensional result by Rockafellar-Wets \cite{RW}.
Similar results are also established for paramonotone operators which are maximally monotone.
Almost strictly convex functions and almost differentiable functions, also known as
essentially strictly convex functions and essentially smooth
functions respectively,
have been
comprehensively studied in general Banach spaces in
% Bauschke, Borwein, and Combettes
\cite{bb-combettes}; which, however,
does not contain characterizations in terms of strict and almost strict monotonicities of subdifferential
mappings. Therefore, many results in the present paper are also new even in a Hilbert
space setting.

The paper is organized as follows. Section~\ref{s:differentiable} reviews some
facts on differentiable strictly convex functions and presents general second order derivative tests
for strict convexity. In section~\ref{s:strict:almosts} we discuss
relationship between strictly and almost strictly convex functions. Section~\ref{s:strict:convex}
provides an account of the main properties of strictly convex functions via subgradient inequalities and
subdifferential mappings.
In Section~\ref{s:almostfunction} we characterize almost strictly convex functions via the
strict monotonicity of
their subdifferential
mappings, which generalize the result by Rockafellar-Wets from a finite-dimensional space to a general Hilbert space.
Moreover, we show that the subdifferential mapping of a proper, lsc, and convex function is almost strictly
monotone if and only if it is strictly monotone.
 In Section~\ref{s:paramonotone}, the general results of section~\ref{s:almostfunction}
are extended to paramontone operators. Notably, an almost strict convexity versus almost
differentiability duality is given in
Section~\ref{s:duality}. Moreover, different variants of almost differentiable functions are shown to be the same.
Section~\ref{s:openprobs} is devoted to the connection between almost strictly convex functions and tilted-stable optimization, and open problems.

In the remainder of this section we recall some basic concepts used in the sequel. Our notations
basically follow \cite{Bauschke-Combettes-2017, RW}.

\subsection{Convex functions and monotone operators}
\begin{definition} A function $f:\HH\rightarrow\EXR$ is convex if $\dom f$ is convex
and
$$(\forall x, y\in\dom f)
(\forall \lambda\in \left]0,1\right[)\ f(\lambda x+(1-\lambda)y)\leq
\lambda f(x)+(1-\lambda)f(y);$$
and $f$ is strictly convex if $\dom f$ is convex
and
$$(\forall x, y\in\dom f, x\neq y)(\forall \lambda\in \left]0,1\right[)\ f(\lambda x+(1-\lambda)y)<\lambda f(x)+(1-\lambda)f(y).$$
\end{definition}
We call $f$ a proper function if $f(x)<+\infty$ for at least one $x\in\HH$, and $f(x)>-\infty$ for all $x\in\HH$.
The set of proper, lsc, and convex functions from $\HH$ to $\RX$ is denoted by $\Gamma_{0}(\HH)$. For $f\in\CVF$,
its subdifferential mapping is the set-valued operator
$$\partial f:\HH\rightarrow 2^{\HH}:
x\mapsto \menge{v\in\HH}{(\forall y\in\HH)\ f(y)\geq f(x)+\scal{v}{y-x}},$$
and its directional derivative at $x\in\dom f$ in the direction $u\in \HH$ is
$$f'(x;u)=\lim_{t\downarrow 0}\frac{f(x+tu)-f(x)}{t}.$$
Moreover, $v\in\partial f(x)\Leftrightarrow [(\forall u\in\HH)\ f'(x;u)\geq \scal{v}{u}].$
The Fenchel conjugate of $f$ is given by
$$f^*:\HH\rightarrow\RX:v\mapsto \sup_{x\in\HH}(\scal{x}{v}-f(x)),$$
$f^*\in\CVF$, and $\partial f^*=(\partial f)^{-1}$; see \cite[Corollaries 13.38, 16.29]{Bauschke-Combettes-2017}.
Subdifferential mapping and Fenchel conjugate play an important role in convex analysis and optimization; see, e.g.,
\cite{Bauschke-Combettes-2017, BeckSIAM, hiriart93i, hiriart93ii, Rock70, RW}.
It is well-known that for $f\in\CVF$, $\dom\partial f$ might not be convex; see, e.g.,
\cite{moffat16, Rock70}.
\begin{definition} A function $f\in\CVF$ is almost strictly convex if it is strictly convex
along every line segment in $\dom\partial f$.
\end{definition}
Different from \cite[Definition 5.2(ii)]{bb-combettes}, we do not require $(\partial f)^{-1}$ to be locally bounded,
in particular, $\inte\dom(\partial f)^{-1}=\inte\dom f^*\neq\varnothing$, which is automatic for an almost strictly
convex function $f$ on a finite-dimensional space, see Lemma~\ref{l:finite} in section~\ref{s:duality}.
\begin{example}\emph{(\cite[Example 5.14]{bb-combettes})}\label{e:empty:interior}
 Let $(p_{n})_{n\in\NN}$ be a sequence in $\left[2,+\infty\right[$
and $p_{n}\rightarrow\infty$
as $n\rightarrow\infty$. In the Hilbert space $\ell^{2}$, define
$$f:\ell^2\rightarrow\RR: x\mapsto \sum_{n}\frac{1}{p_{n}}|x_{n}|^{p_{n}}.$$
Then $f$ is strictly convex and G\^ateaux differentiable. The Fenchel conjugate of $f$ is
$$f^*:\ell^2\rightarrow\RX:y\mapsto \sum_{n}\frac{1}{q_{n}}|y_{n}|^{q_{n}}$$
where $1/p_{n}+1/q_{n}=1$. Then $f^*$ is strictly convex, and
$\partial f^*$ is at most single-valued and $\inte\dom f^*=\varnothing$.
\end{example}

\begin{definition}
An operator $A:\HH\rightarrow 2^{\HH}$ is monotone if
$$(\forall (x_{i},v_{i})\in\gra A, i=0,1)\ \scal{x_{0}-x_{1}}{v_{0}-v_{1}}\geq 0;$$
strictly monotone if
$$[(\forall (x_{i},v_{i})\in\gra A, i=0,1)\ x_{0}\neq x_{1}]\ \Rightarrow\ \scal{x_{0}-x_{1}}{v_{0}-v_{1}}>0.$$
% almost strictly monotone if $A$ is strictly monotone on
%every line segment in $\dom A$.
\end{definition}
\begin{definition}
An operator $A:\HH\rightarrow 2^{\HH}$ is maximally monotone if no enlargement of its graph
is possible in $\HH\times\HH$ without destroying monotonicity, i.e., for every pair
$$(\forall (x,v)\in\HH\times\HH\setminus\gra A)(\exists (\bar x,\bar v)\in \gra A)\ \scal{\bar v- v}{\bar x-x}<0.$$
\end{definition}
A fundamental example of a maximally monotone operator is the subdifferential
of a function in $\CVF$; see, e.g., \cite{Bauschke-Combettes-2017,Rock70,simons}.
It is well-known that even the domain of a maximally monotone operator $A:\HH\rightarrow 2^{\HH}$ might
not be convex; see, e.g., \cite{moffat16,Rock70,simons}. Therefore,
it is natural to introduce the following notion.
\begin{definition} A monotone operator $A:\HH\rightarrow 2^{\HH}$ is almost strictly monotone
if it is strictly monotone on
every line segment in $\dom A$.
\end{definition}

\subsection{Notations}
In Hilbert space $\HH$,
the expressions $x_{k}\rightharpoonup x$ and $x_{k}\rightarrow x$ denote, respectively, the weak
and strong convergence to $x$ of a sequence $(x_{k})_{k\in\NN}$ in $\HH$.
 For any two different points $x_{0}, x_{1}\in\HH$, the closed line segment is
$[x_{0},x_{1}]:=\menge{(1-t)x_{0}+tx_{1}}{0\leq t\leq 1}$, and the open line segment
is $\left]x_{0},x_{1}\right[:=\menge{(1-t)x_{0}+tx_{1}}{0<t<1}$.
For an operator $A:\HH\rightarrow 2^{\HH}$, its graph is
$\gra A:=\menge{(x,v)\in\HH\times\HH}{v\in Ax}$, domain $\dom A:=\menge{x\in\HH}{Ax\neq\varnothing}$,
and range $\ran A:=\menge{v\in\HH}{v\in Ax, x\in\dom A}$.
The set-valued inverse of $A$ is $A^{-1}$ with $\gra A^{-1}:=\menge{(v,x)\in\HH\times\HH}{(x,v)\in\gra A}.$
The resolvent of $A$ is denoted by $J_{A}:=(\Id+A)^{-1}$, where $\Id$ is the identity operator on $\HH$.
When $A:\HH\rightarrow\HH$ is a linear operator, $A^*$ is its adjoint.
For a function $f\in\CVF$, we use
$\dom f, \dom \partial f, \ran\partial f$ for its domain, subdifferential domain,
and subdifferential range.
Let $C$ be a subset of $\HH$. Its interior is denoted by $\inte C$, closure in the strong topology by
$\closu{C}$, closure of its convex hull in the strong topology by $\closu{\conv}C$, and closure of its convex hull in the weak topology by $\closu{\conv}^w C$.
When $\HH$ is finite-dimensional, the relative interior of $C$ is
$\reli C:=\menge{x\in\HH}{(\exists \delta>0)\ \BB_{\delta}(x)\cap \aff(C)\subset C}$
in which $\BB_{\delta}(x)$ is the ball centered at $x$ with radius $\delta>0$ and $\aff C$ is the affine hull of $C$.
Finally, the normal cone mapping of $C$ is defined by
$N_{C}(x):=\menge{v}{(\forall y\in C)\ \scal{v}{y-x}\leq 0}$ if $x\in C$, and $\varnothing$ if $x\not\in C$.

\section{Differentiable and strictly convex functions on $\RR^n$}\label{s:differentiable}
We start with some facts and results on strictly convex functions on $\RR$.  These facts can be found
in \cite[pages 45--47]{RW}. More general second order derivative tests for strict convexity are also given.
\begin{fact}\emph{(\cite[Theorem 2.13]{RW})}\label{f:one:dim}
For a differentiable function $f$ on an open interval $I\subseteq\RR$, each of the following is sufficient and necessary
for $f$ to be strictly convex on $I$:
\begin{enumerate}
\item\label{i:oned1} $f'$ is strictly increasing on $I$.
\item $f(y)>f(x)+f'(x)(y-x)$ for all $x,y\in I$ with $x\neq y$.
\end{enumerate}
A sufficient  (but not necessary) condition is:
 $f"(x)>0$ for all $x\in I$ (assuming twice differentiability).
\end{fact}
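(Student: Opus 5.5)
The plan is to prove the cycle: strict convexity $\Rightarrow$ (ii) $\Rightarrow$ strict convexity, and separately strict convexity $\Leftrightarrow$ \ref{i:oned1}. The one-variable mean value theorem is the main tool. Throughout, $f$ is differentiable on the open interval $I$.

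\emph{Strict convexity implies (ii).} Fix $x\neq y$ in $I$ and put $z=(x+y)/2$. Convexity gives the chord-slope monotonicity, which yields $f'(x)(y-x)\le f(z)-f(x)$; this is the limit of difference quotients from $x$ toward $z$, and it holds for either order of $x$ and $y$. Strict convexity at the midpoint gives $f(z)<\tfrac12 f(x)+\tfrac12 f(y)$. Combining the two,
\[
f(x)+f'(x)(y-x)=f(x)+2f'(x)(z-x)\le f(x)+2\bigl(f(z)-f(x)\bigr)<f(y).
\]

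\emph{(ii) implies strict convexity.} Let $x\neq y$, $\lambda\in\left]0,1\right[$, and $z=\lambda x+(1-\lambda)y$. Apply (ii) at the base point $z$ twice, once with target $x$ and once with target $y$, multiply by $\lambda$ and $1-\lambda$ respectively, and add. The derivative terms cancel because $\lambda(x-z)+(1-\lambda)(y-z)=0$, which leaves $\lambda f(x)+(1-\lambda)f(y)>f(z)$.

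\emph{(ii) implies \ref{i:oned1}.} Write (ii) with base $x$ and target $y$, then with base $y$ and target $x$, and add. This gives $(f'(y)-f'(x))(y-x)>0$, so $f'$ is strictly increasing.

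\emph{\ref{i:oned1} implies strict convexity.} This is the step needing real care, since $f'$ need not be continuous. I would use the mean value theorem instead of integration. For $x<z<y$ with $z=\lambda x+(1-\lambda)y$, choose $\xi\in\left]x,z\right[$ and $\eta\in\left]z,y\right[$ with
\[
\frac{f(z)-f(x)}{z-x}=f'(\xi)<f'(\eta)=\frac{f(y)-f(z)}{y-z}.
\]
Rearranging gives $f(z)<\lambda f(x)+(1-\lambda)f(y)$, because $z-x=(1-\lambda)(y-x)$ and $y-z=\lambda(y-x)$.

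\emph{The second-derivative condition.} If $f''>0$ on $I$, the mean value theorem applied to $f'$ shows $f'$ is strictly increasing, so \ref{i:oned1} gives strict convexity. The condition is not necessary: $f(x)=x^4$ on $\RR$ is strictly convex, since $f'(x)=4x^3$ is strictly increasing, but $f''(0)=0$.
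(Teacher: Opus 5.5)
The paper gives no proof of this statement. It is quoted from Rockafellar--Wets, Theorem 2.13, and your argument is correct, complete, and essentially the standard textbook proof. It uses chord-slope monotonicity for the strict gradient inequality, the weighted averaging at the base point $z$ for the converse, and the mean value theorem to get from a strictly increasing $f'$ to strict convexity.

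One slip to fix is in your first step. Chord-slope monotonicity along the segment from $x$ to $z=(x+y)/2$ yields $f'(x)(z-x)\le f(z)-f(x)$, not $f'(x)(y-x)\le f(z)-f(x)$ as you wrote. The latter is false in general: for $f(t)=t^2$, $x=1$, $y=3$ it reads $4\le 3$. Your displayed chain already uses the correct inequality, so the conclusion stands.
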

Below is a more general second order derivative test for strict convexity.
\begin{theorem}\label{t:main}
 Let $f:[a,b]\rightarrow\RR$ be piecewise differentiable. Suppose that $f'$ is differentiable
almost everywhere.
If $f"\geq 0$ almost everywhere, and the Lebesgue measure
$\mu(I\cap\{x|\ f"(x)>0\})>0$ for very open interval $I$, then
$f$ is strictly convex.
\end{theorem}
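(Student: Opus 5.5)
The plan is to reduce the statement to \cref{f:one:dim}\ref{i:oned1}: show that $f'$ is strictly increasing on the open interval $\left]a,b\right[$, deduce strict convexity there, and then pass to the closed interval $[a,b]$.

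\textbf{Key identity.} The central tool is the fundamental theorem of calculus for $f'$:
\begin{equation*}
f'(y)-f'(x)=\int_x^y f''(t)\,dt \qquad (a<x<y<b).
\end{equation*}
I will read the hypotheses as making this valid, that is, $f'$ is absolutely continuous on each piece and the pieces fit together. This reading is needed because ``$f'$ differentiable almost everywhere with $f''\geq 0$ a.e.'' alone does not force $f'$ to be monotone; a Cantor-type $f'$ is a counterexample. I expect this to be the main obstacle. Stating it explicitly, as absolute continuity of $f'$ across the finitely many break points of the piecewise structure, is what makes the proof go through.

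\textbf{Strict monotonicity of $f'$.} Fix $a<x<y<b$ and split the integral over $\left]x,y\right[$ into two parts.
\begin{itemize}
\item On $\left]x,y\right[\cap\{t\mid f''(t)>0\}$ the integrand is strictly positive, and this set has positive Lebesgue measure by hypothesis.
\item On the remaining part, $f''\geq 0$ almost everywhere.
\end{itemize}
The integral of a function that is nonnegative a.e.\ and strictly positive on a set of positive measure is strictly positive. Hence $f'(y)>f'(x)$. If $f$ is only piecewise differentiable, I apply this on each piece. At a break point $c$, I use the identity above, or equivalently the inequality $f'_-(c)\leq f'_+(c)$ it implies, to chain the strict inequalities across $c$. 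This gives that the (one-sided) derivative is strictly increasing on all of $\left]a,b\right[$. By \cref{f:one:dim}\ref{i:oned1}, applied piecewise and then glued via the monotone one-sided derivatives, $f$ is strictly convex on $\left]a,b\right[$.

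\textbf{Extension to the endpoints.} Since $f$ is continuous on $[a,b]$, convexity on $\left]a,b\right[$ passes to $[a,b]$ by taking limits. Suppose strictness failed for some $x\neq y$ in $[a,b]$ and some $\lambda\in\left]0,1\right[$, i.e.
\begin{equation*}
f(\lambda x+(1-\lambda)y)=\lambda f(x)+(1-\lambda)f(y).
\end{equation*}
A convex function that touches its chord at an interior point of the segment coincides with the chord on the whole segment, so $f$ would be affine on $[x,y]$. In particular $f$ would be affine on a nondegenerate subinterval of $\left]a,b\right[$, contradicting strict convexity there. Hence $f$ is strictly convex on $[a,b]$.
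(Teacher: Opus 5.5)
Your argument matches the paper's. The paper's whole proof is the identity $f'(y)-f'(x)=\int_x^y f''(t)\,dt>0$ combined with Fact~\ref{f:one:dim}\ref{i:oned1}, and your splitting of the integral, gluing across break points, and extension to the endpoints make explicit steps the paper leaves out.

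Your caveat is correct, and it applies equally to the paper's proof, which uses that identity without justification. Take $f(x)=\tfrac14x^{2}-\int_0^x c(t)\,dt$ on $[0,1]$, where $c$ is the Cantor function. It satisfies all the stated hypotheses, since $f''=\tfrac12$ a.e., yet $f'(1)=-\tfrac12<0=f'(0)$, so $f$ is not even convex. Likewise $f(x)=x^{2}-|x|$ on $[-1,1]$ shows that kinks at break points must also be excluded. So an extra assumption such as your absolute continuity of $f'$ across the pieces is genuinely required.
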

\begin{proof} We only need to show that $f'$ is strictly increasing by Fact~\ref{f:one:dim}\ref{i:oned1}.
Indeed, for $x<y$,
by the assumption, we have
$$f'(y)-f'(x)=\int_{x}^y f"(t)dt>0.$$
\end{proof}
\begin{corollary} Let $f:[a,b]\rightarrow\RR$ be piecewise differentiable. Suppose that $f'$ is differentiable
except for a set of countable points.
If $f">0$ whenever it exists, then
$f$ is strictly convex.
\end{corollary}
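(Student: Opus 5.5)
The natural route is through Theorem~\ref{t:main}: check its hypotheses and the conclusion follows. Piecewise differentiability of $f$ and almost-everywhere differentiability of $f'$ are immediate, since a countable set has Lebesgue measure zero. Writing $E$ for the countable exceptional set where $f''$ fails to exist, we have $f''>0$ on $[a,b]\setminus E$. Hence $f''\geq 0$ almost everywhere. Moreover, for every open interval $I\subseteq[a,b]$,
\[
\mu\bigl(I\cap\{x\mid f''(x)>0\}\bigr)\geq \mu(I)-\mu(E)=\mu(I)>0,
\]
which is the measure condition required in Theorem~\ref{t:main}.

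The argument of Theorem~\ref{t:main} uses the identity $f'(y)-f'(x)=\int_x^y f''(t)\,dt$, which is only valid when $f'$ is absolutely continuous on $[x,y]$. This is where the main obstacle lies. Everywhere differentiability of $f'$ outside a countable set, together with an integrable derivative, is enough by a standard result: if $g$ is continuous on $[x,y]$ and differentiable off a countable set, then $g(y)-g(x)=\int_x^y g'$ whenever $g'$ is Lebesgue integrable. This can fail, however, if $f'$ is not continuous, for instance at the junctions of the pieces of $f$.

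To avoid the integral altogether, I would argue directly with the monotonicity theorem. Let $g:=f'$ and suppose $g$ is continuous on a subinterval. A continuous function whose derivative is $>0$ (in particular $\geq 0$) outside a countable set is nondecreasing; this is proved by the usual $\varepsilon$-perturbation argument applied to $g(t)+\varepsilon t$, using the fact that a countable set cannot carry the image of an interval. Now suppose $g(x)=g(y)$ for some $x<y$. Then $g$ is constant on $[x,y]$, so $g'=0$ there, which contradicts $g'>0$ at all but countably many points of $[x,y]$. Hence $g$ is strictly increasing on each piece.

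Across the finitely many break points of the piecewise structure, I would glue the pieces using one-sided derivatives. Here I would read the hypothesis ``$f''>0$ wherever it exists'' together with convexity across the junctions, namely that the left derivative at each break point does not exceed the right derivative. With that, $f'$ (one-sided where necessary) is strictly increasing on all of $[a,b]$. Fact~\ref{f:one:dim}\ref{i:oned1}, applied on each open piece and extended via the chord-slope characterization of strict convexity, then gives strict convexity of $f$ on $[a,b]$.
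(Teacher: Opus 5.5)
\textbf{Gap 1: the junction step.} The inequality $f'_{-}(p)\le f'_{+}(p)$ at the break points is not a reading of ``$f''>0$ wherever it exists''. It is an extra hypothesis, and it does not follow from the stated ones. Take $f(x)=x^2-|x|$ on $[-1,1]$. It is continuous and differentiable on $[-1,0]$ and on $[0,1]$. Also $f'$ is differentiable except at $0$, and $f''=2>0$ wherever it exists. Yet $f(0)=0>-\tfrac14=\tfrac12 f(-\tfrac12)+\tfrac12 f(\tfrac12)$, so $f$ is not even convex. The corollary is therefore false as stated.

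The paper gives no separate proof and presents the corollary as a consequence of Theorem~\ref{t:main}. That theorem's identity $f'(y)-f'(x)=\int_x^y f''$ fails for this $f$, so your suspicion of that step is justified. Theorem~\ref{t:main} even fails for $C^1$ functions. With $C$ the Cantor function and $0<\varepsilon<1$, $f(x)=\int_0^x(\varepsilon t-C(t))\,dt$ has $f''=\varepsilon$ off the Cantor set, but $f'(1)<f'(0)$. So your first paragraph, which uses only that $E$ is null, cannot carry a proof; what matters is that $E$ is countable. Your argument can establish a corrected statement: $f$ continuous on $[a,b]$ and $f'_{-}(p)\le f'_{+}(p)$ at every interior break point $p$ (for example, $f$ differentiable on $]a,b[$). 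You should state these hypotheses explicitly.

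\textbf{Gap 2: inside each piece.} You write ``suppose $g$ is continuous'' and then conclude that $g$ is strictly increasing on each piece. Continuity of $f'$ on a piece is not among the hypotheses and is not evident; it is true a posteriori, but only once monotonicity is known. Points of $E$ may lie, even densely, inside a piece. Your $\varepsilon$-perturbation argument needs continuity to get $h(s)=c$ at the last crossing point $s$.

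You can avoid needing continuity of $f'$ by tilting $f$ itself:
\begin{enumerate}
\item Let $[x,y]$ lie in a piece, let $\ell$ be the chord of $f$ over $[x,y]$ with slope $\sigma$, and suppose $f(z)>\ell(z)$ for some $z\in]x,y[$.
\item For $|\lambda|$ small, $\psi(t):=f(t)-\ell(t)+\lambda(t-x)$ attains its maximum over $[x,y]$ at an interior point $m$. There $\psi'(m)=0$, i.e., $\lambda=\sigma-f'(m)$.
\item Choose $\lambda$ outside the countable set $\{\sigma-f'(e)\mid e\in E\cap\,]x,y[\,\}$. This forces $m\notin E$, so $\psi''(m)=f''(m)>0$.
\item The mean value theorem then gives $\psi>\psi(m)$ just to the right of $m$, which is a contradiction.
\end{enumerate}
Hence $f$ is convex on each piece. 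An affine segment would force $f''=0$ on an open interval, so $f$ is strictly convex there. Under the added junction hypothesis, the gluing then proceeds essentially as you describe.
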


\begin{corollary} Let $f:[a,b]\rightarrow\RR$ be piecewise differentiable. Suppose that $f'$ is differentiable
except for a set of finite points.
If $f">0$ whenever it exists, then
$f$ is strictly convex.
\end{corollary}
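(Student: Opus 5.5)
This is a special case of the preceding corollary: a finite exceptional set is countable. So the plan is to reduce to that corollary, or to Theorem~\ref{t:main}, and to check that the hypotheses carry over.

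First, let $E\subset[a,b]$ be the finite set of points where $f'$ fails to be differentiable (together with the finitely many break points of the piecewise differentiability). Since $E$ is finite, it is countable, and the preceding corollary applies verbatim and gives strict convexity of $f$.

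To keep the argument self-contained, I would also verify the hypotheses of Theorem~\ref{t:main} directly.
\begin{itemize}
\item A finite set has Lebesgue measure zero, so $f'$ is differentiable almost everywhere, and $f''>0$ holds on $[a,b]\setminus E$.
\item In particular $f''\geq 0$ almost everywhere.
\item For every open interval $I\subset[a,b]$, the set $I\cap\{x \mid f''(x)>0\}$ contains $I\setminus E$. Hence its measure is $\mu(I)>0$.
\end{itemize}
Theorem~\ref{t:main} then yields that $f$ is strictly convex.

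The only delicate point is the step hidden inside Theorem~\ref{t:main}, namely the identity $f'(y)-f'(x)=\int_x^y f''$. This identity needs $f'$ to be absolutely continuous, or at least continuous with a derivative existing off a countable set and integrable. In the finite-exception case I would avoid that issue altogether. Split $[x,y]$ at the finitely many points of $E$. On each open subinterval $f'$ is differentiable with $f''>0$, so $f'$ is strictly increasing there by the mean value theorem. I then need $f'$ to be monotone across the points of $E$. This follows if $f'$ is continuous at those points, which is implicit in "$f$ differentiable"; then $f'$ is strictly increasing on all of $[x,y]$. Fact~\ref{f:one:dim}\ref{i:oned1} concludes, applied on the interior of $[a,b]$, and strict convexity extends to the endpoints by continuity.

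I expect gluing across the exceptional points to be the main obstacle. The mean value theorem handles the open pieces, but if $f'$ only has one-sided limits at a point of $E$, I would need a monotonicity or continuity assumption to connect the pieces, and I would make that assumption explicit.
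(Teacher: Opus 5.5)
Your first paragraph is exactly how the paper obtains this corollary. The paper states it without proof, as the finite case of the preceding corollary, i.e.\ of Theorem~\ref{t:main}. You are right to distrust that route. The identity $f'(y)-f'(x)=\int_x^y f''$ can fail when $f'$ is not absolutely continuous, and Theorem~\ref{t:main} is in fact false as stated. Take $h$ the Cantor function and $f(x):=\tfrac12x^2-2\int_0^x h(t)\,dt$ on $[0,1]$. Then $f$ is $C^1$, and $f''=1$ exists off the (null) Cantor set, so every hypothesis of Theorem~\ref{t:main} holds. Yet $f'(1)=-1<0=f'(0)$, so $f$ is not even convex. The exceptional set there is uncountable, so this does not contradict the corollary you are proving. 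It does mean that checking the hypotheses of Theorem~\ref{t:main} proves nothing, and your splitting argument is what actually carries the proof.

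Its gluing step, however, is justified incorrectly. Differentiability of $f$ does not make $f'$ continuous (think of $x^2\sin(1/x)$). You do not need continuity. Suppose $f$ is differentiable at $e\in E$ and $f'$ is strictly increasing on each side of $e$. Take $s<s'<e$. For $u\in\left]s',e\right[$ the mean value theorem gives $\xi_u\in\left]u,e\right[$ with $(f(e)-f(u))/(e-u)=f'(\xi_u)>f'(s')$. Letting $u\uparrow e$ yields $f'(e)\geq f'(s')>f'(s)$. Symmetrically, $f'(e)<f'(t)$ whenever $t>e$. So $f'$ is strictly increasing across $e$ with no extra hypothesis.

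The break points of the piecewise differentiability are a different matter: there no gluing is possible, and the statement genuinely fails. The function $f(x):=x^2-|x|$ on $[-1,1]$ is piecewise differentiable, and $f'$ is differentiable off $\{0\}$ with $f''=2$. Yet $f(0)=0>-\tfrac14=\tfrac12 f(-\tfrac12)+\tfrac12 f(\tfrac12)$. Hence you must not put such points into $E$ and claim the preceding corollary ``applies verbatim''. The corollary has to be read with $f$ differentiable on $\left]a,b\right[$, which the paper tacitly assumes when its proof of Theorem~\ref{t:main} invokes Fact~\ref{f:one:dim}. Alternatively, it must be supplemented by $f'_{-}(e)\leq f'_{+}(e)$ at each corner, which is the explicit assumption you anticipated. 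Under that reading, your mean-value-theorem argument together with the gluing above is a complete and more elementary proof than the paper's route.

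One small fix remains. Passing to the limit at $a$ and $b$ yields only convexity on $[a,b]$, not strict convexity. Strictness follows because equality in the convexity inequality for some $x<y$ in $[a,b]$ would force $f$ to be affine on $[x,y]$, hence on a nondegenerate subinterval of $\left]a,b\right[$.
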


The converse of Theorem~\ref{t:main} fails, as the following example shows.
\begin{example}
Let $h:\RR\rightarrow\RR$ be a Cantor-type singular function, i.e., continuous, strictly increasing,
and $h'=0$ almost everywhere \cite{stromberg}. Define
$f(x):=\int_{0}^x h(t)dt.$
Then $f$ is strictly convex, but $f"=0$ almost everywhere.
\end{example}

Next we consider strictly convex functions on $\RR^n$.

\begin{fact}\emph{(\cite[Theorem 2.14]{RW}, \cite[Theorem 2.1.12]{zalinescu})}\label{f:strict:convex}
For a differentiable function $f$ on an open convex set $O\subseteq\RR^n$, each of the following is sufficient and necessary
for $f$ to be strictly convex on $O$:
\begin{enumerate}
\item\label{i:twod1}
 $\scal{\nabla f(x_{1})-\nabla f(x_{2})}{x_{1}-x_{2}}>0$ for all $x_{1},x_{2}\in O$ with $x_{1}\neq x_{2}$.
\item\label{i:tangentline}  $f(x_{2})>f(x_{1})+\scal{\nabla f(x_{1})}{x_{2}-x_{1}}$ for all $x_{1},x_{2}\in O$ with $x_{1}\neq x_{2}$.
\end{enumerate}
A sufficient  (but not necessary) condition is:
its Hessian matrix $\nabla^2 f(x)$ is positive definite for all $x\in O$ (assuming twice differentiability).
\end{fact}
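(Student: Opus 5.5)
The plan is to reduce everything to the one-dimensional case, Fact~\ref{f:one:dim}, by restricting $f$ to line segments in $O$. Fix $x_1\neq x_2$ in $O$. Since $O$ is open and convex, there is an open interval $I\supset[0,1]$ such that $x_1+t(x_2-x_1)\in O$ for all $t\in I$. Define
\[
\varphi(t):=f(x_1+t(x_2-x_1)), \qquad t\in I .
\]
Then $\varphi$ is differentiable on $I$ with $\varphi'(t)=\scal{\nabla f(x_1+t(x_2-x_1))}{x_2-x_1}$. Moreover, $f$ is strictly convex on $O$ if and only if every such $\varphi$ is strictly convex on its interval. The backward direction of this equivalence is immediate from the definition, since strict convexity of $f$ only involves pairs of points and the segment between them.

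For necessity of \ref{i:twod1}: if $f$ is strictly convex, then $\varphi$ is strictly convex. By Fact~\ref{f:one:dim}\ref{i:oned1}, $\varphi'$ is strictly increasing, so $\varphi'(1)>\varphi'(0)$. This is exactly $\scal{\nabla f(x_2)-\nabla f(x_1)}{x_2-x_1}>0$.

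For sufficiency of \ref{i:twod1}: take $s<t$ in $I$ and put $y_s=x_1+s(x_2-x_1)$, $y_t=x_1+t(x_2-x_1)$. Then
\[
(t-s)\bigl(\varphi'(t)-\varphi'(s)\bigr)=\scal{\nabla f(y_t)-\nabla f(y_s)}{y_t-y_s}>0 .
\]
Hence $\varphi'$ is strictly increasing, and by Fact~\ref{f:one:dim} $\varphi$ is strictly convex on $I$. As $x_1,x_2$ were arbitrary, $f$ is strictly convex.

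Condition \ref{i:tangentline} is handled the same way. Its left side is $\varphi(1)$ and its right side is $\varphi(0)+\varphi'(0)\cdot 1$. Necessity follows from the second item of Fact~\ref{f:one:dim} applied at the points $0,1$. For sufficiency, the point $x_1$ in \ref{i:tangentline} may be replaced by any point $y_s$ on the line, so the one-dimensional tangent inequality $\varphi(t)>\varphi(s)+\varphi'(s)(t-s)$ holds for all $s\neq t$ in $I$. This uses only that $\nabla f(y_s)$ paired with $y_t-y_s=(t-s)(x_2-x_1)$ equals $\varphi'(s)(t-s)$.

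For the Hessian statement: $\varphi''(t)=\scal{\nabla^2 f(y_t)(x_2-x_1)}{x_2-x_1}>0$, so strict convexity follows from the sufficient condition in Fact~\ref{f:one:dim}. Non-necessity is shown by $f(x)=\sum_i x_i^4$ on $\RR^n$ (or $x^4$ on $\RR$). This function is strictly convex, but its Hessian vanishes at $0$.

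The only delicate point is the bookkeeping for sufficiency. One must check that the multidimensional hypothesis, applied to arbitrary pairs of points on the line through $x_1,x_2$ (not only to $x_1,x_2$ themselves), gives the one-dimensional hypothesis on all of $I$. It also requires openness of $O$, so that $I$ is an open interval as Fact~\ref{f:one:dim} demands. Everything else is the chain rule.
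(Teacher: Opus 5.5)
Your proof is correct. The paper gives no proof of its own for this Fact; it simply cites Rockafellar--Wets (Theorem~2.14) and Z\u{a}linescu (Theorem~2.1.12). Your argument, which restricts $f$ to the one-dimensional functions $t\mapsto f(x_1+t(x_2-x_1))$ on an open interval and then applies Fact~\ref{f:one:dim}, is essentially the standard Rockafellar--Wets proof, and it is the same restriction-to-a-segment device the paper uses in Lemma~\ref{l:inequality:s} and Corollary~\ref{c:nice}.
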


The following extends Theorem~\ref{t:main} to higher dimensional spaces.
\begin{theorem}\label{t:main1} Let $O\subseteq\RR^n$ be a nonempty open convex set, and let
 $f:O\rightarrow\RR$ be differentiable. Suppose that $\nabla f$ is differentiable
almost everywhere.
If $\nabla^2 f(x)$ is positive semidefinite for almost everywhere $x\in O$,
and the Lebesgue measure
\begin{equation}\label{e:pmeasure}
\mu(\{t|\ t\in [0,1], \nabla^2 f(x_{1}+t(x_{2}-x_{1})) \text{ is positive definite}\})>0
\end{equation}
for all $x_{1},x_{2}\in O$, then
$f$ is strictly convex.
\end{theorem}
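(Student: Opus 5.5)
Fix $x_1\neq x_2$ in $O$ and reduce to the one-dimensional case. Put $d:=x_2-x_1$ and
$$g(t):=f(x_1+td),\qquad t\in I:=\menge{t\in\RR}{x_1+td\in O}.$$
Since $O$ is open and convex, $I$ is an open interval containing $[0,1]$, and $g$ is differentiable with $g'(t)=\scal{\nabla f(x_1+td)}{d}$. It suffices to show $\scal{\nabla f(x_2)-\nabla f(x_1)}{d}>0$, that is, $g'(1)>g'(0)$. Fact~\ref{f:strict:convex}\ref{i:twod1} then gives strict convexity of $f$ on $O$. Equivalently, one can show $g'$ is strictly increasing on $[0,1]$ and apply Fact~\ref{f:one:dim}\ref{i:oned1} along each segment.

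Next, along the segment I would write
$$g'(1)-g'(0)=\int_0^1 \scal{\nabla^2 f(x_1+td)\,d}{d}\,dt.$$
Where $\nabla f$ is differentiable at $x_1+td$, the integrand equals $g''(t)$. Where $\nabla^2 f$ is positive semidefinite, it is $\geq 0$. On the set of positive measure given by \eqref{e:pmeasure}, it is $>0$, because $d\neq 0$. Hence the integral is strictly positive, mirroring the proof of Theorem~\ref{t:main}.

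The main obstacle is justifying this integral representation. First, "almost everywhere in $O$" (an $n$-dimensional null set) does not imply "almost everywhere on the segment", since a segment is itself $n$-dimensional null. Second, the fundamental theorem of calculus for $g'$ needs more than a.e. differentiability; the Cantor-type function shows this.

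I would handle both points as follows.
\begin{enumerate}
\item Convexity of $f$ does hold: I would prove it first (or take it as implicit, as the paper seems to), so that $g'$ is nondecreasing. Then Lebesgue's theorem gives $g'(1)-g'(0)\geq\int_0^1 g''(t)\,dt$ for monotone functions. This inequality points the right way, so no absolute continuity is needed.
\item To get convexity and the a.e.-on-segments property together, I would use Fubini. For almost every pair, or for segments parallel to $d$ through a.e. base point, the exceptional set meets the segment in a null set. Then I would extend to all segments by continuity of $\nabla f$ in the monotonicity inequality: the limit is $\geq 0$ for every segment, which gives convexity.
\item For strictness on the given segment, I would use the hypothesis \eqref{e:pmeasure} directly on that segment. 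There $g''(t)$ exists and is $>0$ on a set of positive measure, while $g''\geq 0$ wherever it exists because $g'$ is nondecreasing. This yields $\int_0^1 g''>0$ and hence $g'(1)>g'(0)$.
\end{enumerate}
If these measure-theoretic subtleties resist, I would read the hypotheses as holding along segments, which is how the proof of Theorem~\ref{t:main} implicitly treats them.
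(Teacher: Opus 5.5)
Your reduction (fix $x_1\neq x_2$, show $g'(1)>g'(0)$, invoke Fact~\ref{f:strict:convex}\ref{i:twod1}) is the paper's. The paper's entire proof is the identity $\scal{\nabla f(x_2)-\nabla f(x_1)}{x_2-x_1}=\int_0^1\scal{\nabla^2 f(x_1+td)d}{d}\,dt>0$, stated without justification, and you are right to distrust it. But your repair breaks at ``prove convexity first''. Fubini does give that, for a.e.\ base point $y$, the function $g_y(t):=f(y+td)$ satisfies $g_y''(t)\geq 0$ for a.e.\ $t\in[0,1]$. Getting from there to $g_y'(1)\geq g_y'(0)$, however, requires $g_y'$ to be absolutely continuous. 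That is precisely the Cantor obstruction you raised yourself. Lebesgue's inequality $g'(1)-g'(0)\geq\int_0^1 g''(t)\,dt$ is available only once $g'$ is known to be nondecreasing, i.e.\ once convexity is known. So steps 1 and 2 are circular, and the continuity argument in step 2 only transports an inequality you never obtained on any segment.

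In fact nothing can close this gap: if \eqref{e:pmeasure} is required (as intended) only for $x_1\neq x_2$, the statement is false. Take $n=1$ and $O=\RR$. Let $h$ be the Cantor function extended by $0$ on $\left]-\infty,0\right]$ and by $1$ on $\left[1,+\infty\right[$, and set $f(t):=\int_0^t (s-2h(s))\,ds$.
\begin{itemize}
\item $f$ is $C^1$ with $f'(t)=t-2h(t)$, and $f''=1$ off the null Cantor set.
\item Hence all hypotheses hold, with \eqref{e:pmeasure} even of measure $1$ for every $x_1\neq x_2$; this also holds under your fallback reading ``along segments''.
\item Yet $f'(1)=-1<0=f'(0)$, so $f$ is not even convex.
\end{itemize}
The same $f$ refutes Theorem~\ref{t:main}. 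Only the literal reading rescues the statement: \eqref{e:pmeasure} with $x_1=x_2$ forces $\nabla^2 f(x)$ to exist and be positive definite at every $x\in O$. Then the mean value theorem gives $g'(1)-g'(0)=g''(\xi)>0$ with no measure theory at all.

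Your plan does prove the natural corrected version. If $\nabla f$ is in addition locally Lipschitz, every $g_y'$ is absolutely continuous. Your Fubini-plus-continuity argument then yields monotonicity of $\nabla f$, hence convexity, and your step 3 gives $g'(1)-g'(0)\geq\int_0^1 g''(t)\,dt>0$. Likewise, if $f$ is assumed convex from the outset, step 3 works verbatim.
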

\begin{proof} We only need to show that $\nabla f$ is strictly monotone by Fact~\ref{f:strict:convex}\ref{i:twod1}.
Indeed, for $x_{1},x_{2}\in O$ with
$x_{1}\neq x_{2}$,
by the assumption, we have
$$\scal{\nabla f(x_{2})-\nabla f(x_{1})}{x_{2}-x_{1}}=\int_{0}^1\scal{\nabla^2f(x_{1}+t(x_{2}-x_{1}))(x_{2}-x_{1})}{x_{2}-x_{1}}dt>0.$$
\end{proof}

\section{Strictly versus almost strictly convex functions}\label{s:strict:almosts}
This section is devoted to relationship between strictly and almost strictly convex functions.
Clearly, a strictly convex function is almost strictly convex.
Observe that $\partial f(x)\neq\varnothing$ for $x\in\reli\dom f$ in $\RR^n$ ($\inte\dom f$ in $\HH$) and that
$\reli\dom f$ (
$\inte\dom f$) is convex. If $f$ is almost strictly convex it must be strictly convex on
$\reli\dom f$ in $\RR^n$ ($\inte\dom f$ in $\HH$). Thus, an almost strictly convex function
$f$ can  fail to be strictly convex only on line segments in the relative boundary or
boundary of $\dom f$.

On the real line,
the following pleasing result holds.
\begin{theorem}\label{t:line:same}
Let $f\in\Gamma_{0}(\RR)$. Then the following are equivalent:
\begin{enumerate}
\item\label{i:one1}
$f$ is almost strictly convex.
\item\label{i:one2} $f$ is strictly convex on $\inte\dom f$.
\item\label{i:one3} $f$ is strictly convex.
\end{enumerate}
\end{theorem}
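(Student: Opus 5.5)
We prove the cycle \ref{i:one3}$\Rightarrow$\ref{i:one1}$\Rightarrow$\ref{i:one2}$\Rightarrow$\ref{i:one3}. The first implication is immediate: a strictly convex function is strictly convex along every segment, in particular along segments in $\dom\partial f$. For \ref{i:one1}$\Rightarrow$\ref{i:one2}, we use that on $\RR$ the domain $\dom f$ is an interval $I$ and $\partial f(x)\neq\varnothing$ for every $x\in\inte I$. Hence $\inte\dom f\subset\dom\partial f$, and any two points of the open interval $\inte\dom f$ span a segment inside $\dom\partial f$. Almost strict convexity therefore gives strict convexity on $\inte\dom f$.

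The substantive step is \ref{i:one2}$\Rightarrow$\ref{i:one3}. If $\inte\dom f=\varnothing$, then $\dom f$ is a single point and strict convexity is vacuous. Otherwise $\dom f$ is an interval with endpoints $a<b$ (possibly infinite), and the only new pairs $x\neq y$ to check are those where one or both points are finite endpoints lying in $\dom f$. Take $x<y$ in $\dom f$ and $\lambda\in\left]0,1\right[$, and set $z=\lambda x+(1-\lambda)y\in\left]x,y\right[\subset\inte\dom f$. Suppose, for contradiction, that equality $f(z)=\lambda f(x)+(1-\lambda)f(y)$ holds; convexity forbids a strict inequality the other way. Equality at one interior point of a convex function on $[x,y]$ forces $f$ to be affine on $[x,y]$. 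To see this, observe that the chord lies above $f$ on $[x,y]$ and that $f$ touches it at the interior point $z$. On $[x,z]$, convexity gives $f\leq$ the chord from $(x,f(x))$ to $(z,f(z))$, which is the same line. If $f$ were strictly below the line at some $w\in\left]x,z\right[$, then convexity on $[w,y]$ would put $f(z)$ strictly below the line, a contradiction. The interval $[z,y]$ is handled symmetrically.

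Consequently $f$ is affine on $[x,y]$, hence affine on a nondegenerate subinterval of $\left]x,y\right[\subset\inte\dom f$, for instance on $[z-\varepsilon,z+\varepsilon]$. This contradicts strict convexity on $\inte\dom f$. The main point to handle carefully is this ``equality at an interior point implies affine on the whole segment'' argument, since it is what transfers information from the boundary points $x,y$ to the interior. Lower semicontinuity is not actually needed for this; convexity alone suffices, so the argument is elementary.
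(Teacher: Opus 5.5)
Your proof is correct and follows essentially the paper's route: the same cycle of implications, with the key step (strict convexity on $\inte\dom f$ implies strict convexity) obtained by showing that any failure of strict convexity makes $f$ affine on a segment whose interior lies in $\inte\dom f$. The paper splits into cases by the shape of $\dom f$ and leaves implicit the step that ``equality at one interior point forces affinity on the whole segment''; you treat all cases at once (including a singleton domain) and prove that step explicitly.
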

\begin{proof} ``\ref{i:one1}$\Rightarrow$\ref{i:one2}":
  Suppose that $f$ is almost strictly convex.
Observe that $\dom f$ is an interval.
 Because $f$ is subdifferentiable on $\inte\dom f$ and
$\inte\dom f$ is an interval, $f$ is strictly convex on $\inte\dom f$.

``\ref{i:one2}$\Rightarrow$\ref{i:one3}":
%This is equivalent to that $f$ is strictly convex on $\dom f$.
To see this, consider three cases:

\noindent\emph{Case 1: $\dom f$ is an open interval.} This is clear.

\noindent\emph{Case 2: $\dom f=\left[a,b\right]$.}
Suppose that $f$ is affine on $\left[a, \gamma\right]$ with $a<\gamma\leq b$. Then
$f$ is affine on $\left]\alpha,\gamma\right[$, which is a contradiction. Suppose that $f$ is affine on $\left[\gamma, b\right]$ with $a\leq \gamma< b$. Then
$f$ is affine on $\left]\gamma, b\right[$, which is a contradiction.
%Suppose $f$ is affine on $[a, b]$. Then
%$f$ is affine on $]a, b[$, which is a contradiction.

\noindent\emph{Case 3: $\dom f=\left[a,b\right[$ allowing $b=+\infty$,
and $\left]a, b\right]$ allowing $a=-\infty$.} The arguments are similar
as in \emph{Case 2}.
%Using left and right continuity of $f$ at the endpoints, we deduce that $f$ is strictly convex on $\dom f$.

``\ref{i:one3}$\Rightarrow$\ref{i:one1}": This is clear.
\end{proof}

A remarkable example due to Rockafellar in $\RR^2$  is in order. Denote
the positive orthant in $\RR^n$ by $\RR^n_{++}:=\menge{(x_{i})_{i=1}^{n}}{x_{i}>0, i=1,\ldots, n}$.
%comes first:
\begin{example}
Define $f:\RR^2\rightarrow\RX$ by
$$(x_{1},x_{2})\mapsto \begin{cases}
x_{2}^2/(2x_{1})-2x_{2}^{1/2}, &\text{ if $x_{1}>0$, $x_{2}\geq 0$;}\\
0, &\text{ if $x_{1}=x_{2}=0$;}\\
+\infty, &\text{ otherwise.}
\end{cases}
$$
Then $f$ is strictly convex $\dom \partial f=\RR^2_{++}$, but $f(x_{1},0)=0$ for $x_{1}\geq 0$ not strictly convex.
Hence $f$ is almost strictly convex but not strictly convex. See \cite[page 253]{Rock70} for further details.
\end{example}
Although strictly and almost strictly convex functions are different, they share a surprising common property: unique minimizer, if any. This is one of the key motivation for optimizers to study them.
\begin{theorem} Suppose that $f\in\CVF$ verifies one of the following conditions:
\begin{enumerate}
\item\label{i:min1} $f$ is strictly convex.
\item\label{i:min2} $f$ is almost strictly convex.
\end{enumerate}
Then $(\forall x^*\in\HH)$ the function $x\mapsto f(x)-\scal{x^*}{x}$ has at most one minimizer.
\end{theorem}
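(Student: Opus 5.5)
Since a strictly convex function is almost strictly convex, it suffices to treat case \ref{i:min2}. Fix $x^*\in\HH$ and set $g:=f-\scal{x^*}{\cdot}$. Then $g\in\CVF$, $\partial g=\partial f-x^*$, and hence $\dom\partial g=\dom\partial f$. Subtracting a continuous linear functional does not affect strict convexity along any segment, because the linear part contributes equally to both sides of the defining inequality. Consequently $g$ is again almost strictly convex, and we are reduced to the case $x^*=0$: showing that $f$ itself has at most one minimizer.

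Suppose, toward a contradiction, that $x_0\neq x_1$ both minimize $f$, with common minimum value $m=f(x_0)=f(x_1)\in\RR$. These values are finite because $f$ is proper. By Fermat's rule, $0\in\partial f(x_0)$ and $0\in\partial f(x_1)$. Next take any $x_t:=(1-t)x_0+tx_1$ with $t\in[0,1]$. Convexity gives $f(x_t)\leq m$, and minimality gives $f(x_t)\geq m$, so $f(x_t)=m$. Hence every $x_t$ is also a minimizer, so $0\in\partial f(x_t)$. This shows that the entire closed segment $[x_0,x_1]$ lies in $\dom\partial f$.

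The key point, and the only place where almost strict convexity enters, is the following. Since $[x_0,x_1]$ is a line segment contained in $\dom\partial f$, the function $f$ must be strictly convex on it. In particular, with $\lambda=1/2$,
\[
f\bigl(\tfrac12 x_0+\tfrac12 x_1\bigr)<\tfrac12 f(x_0)+\tfrac12 f(x_1)=m,
\]
which contradicts the fact that $m$ is the minimum value of $f$. Therefore $f$ has at most one minimizer, and the same conclusion holds for $g$ for every $x^*$.

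The main obstacle is that $\dom\partial f$ need not be convex, so almost strict convexity gives no information on an arbitrary segment joining two points. What makes the argument work is the observation that the set of minimizers, $\argmin f=\partial f^{-1}(0)$, is convex and automatically contained in $\dom\partial f$. This supplies exactly the segment on which the hypothesis can be applied.
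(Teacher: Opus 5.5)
Your proof is correct and takes essentially the same route as the paper: two distinct minimizers of the tilted function span a segment of minimizers (the paper obtains this from convexity of $\partial f^*(x^*)$, you from convexity of the argmin directly), every point of it has $x^*$ as a subgradient and so lies in $\dom\partial f$, and the tilted function is constant there, contradicting almost strict convexity. Your reduction to $x^*=0$ and your derivation of the strictly convex case from the almost strictly convex one are harmless variations; the paper handles the strictly convex case directly by noting that the tilted function is strictly convex.
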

\begin{proof}
\ref{i:min1}: The function $x\mapsto f(x)-\scal{x^*}{x}$ is strictly convex.

\ref{i:min2}: Suppose that $\argmin_{x\in\HH}\big(f(x)-\scal{x^*}{x}\big)$ contains two point
$x_{0},x_{1}$ with $x_{0}\neq x_{1}$. Then $x_{0},x_{1}\in\partial f^*(x^*)$ so that
$[x_{0},x_{1}]\subset \partial f^*(x^*)$. Since $x^*\in\partial f(x)$ for every $x\in [x_{0},x_{1}]$ we deduce that
$[x_{0}, x_{1}]\subset\dom\partial f$ and
$f(x)-\scal{x^*}{x}$ attains global min on $[x_{0},x_{1}]$. Thus,
$$(\exists c\in\RR)(\forall x\in [x_{0},x_{1}])\ f(x)=\scal{x^*}{x}+c,$$
contradicting that $f$ is almost strictly convex.
\end{proof}

\section{Strictly convex functions on $\HH$}\label{s:strict:convex}

In general, characterizing a strictly convex function via its subdifferential mapping is murky,
unless the domain of its subdifferential is nice.
We begin with a strict gradient inequality result for a strictly convex function.

\begin{lemma}\label{l:inequality:s}
Suppose that $f\in \CVF$ is strictly convex and $x_{0}\in\dom\partial f$. Then
$$(\forall x\in\HH\setminus\{x_{0}\})(\forall v\in\partial f(x_{0})) \
f(x)>f(x_{0})+\scal{v}{x-x_{0}}.$$
\end{lemma}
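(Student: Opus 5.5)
The plan is to argue by contradiction, exploiting the fact that the subgradient inequality is already non-strict, so only the case of equality has to be excluded. Fix $v\in\partial f(x_{0})$ and $x\neq x_{0}$. By the definition of $\partial f$ we have $f(x)\geq f(x_{0})+\scal{v}{x-x_{0}}$. If $x\notin\dom f$, then $f(x)=\pinf$ and the strict inequality holds trivially, because $x_{0}\in\dom\partial f\subset\dom f$ makes the right-hand side finite. So we may assume $x\in\dom f$ and, to reach a contradiction, that
$$f(x)=f(x_{0})+\scal{v}{x-x_{0}}.$$

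The key step is to take the midpoint $z:=\tfrac12 x_{0}+\tfrac12 x$. Any $\lambda\in\left]0,1\right[$ would work equally well. Since $\dom f$ is convex, $z\in\dom f$. Since $x\neq x_{0}$, strict convexity of $f$ gives
$$f(z)<\tfrac12 f(x_{0})+\tfrac12 f(x)=f(x_{0})+\tfrac12\scal{v}{x-x_{0}},$$
where the equality uses the assumed equality case. On the other hand, the subgradient inequality at $x_{0}$ applied to the point $z$ gives
$$f(z)\geq f(x_{0})+\scal{v}{z-x_{0}}=f(x_{0})+\tfrac12\scal{v}{x-x_{0}}.$$
These two displays contradict each other, which proves the strict inequality.

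There is no substantial obstacle. The only points needing care are the case $x\notin\dom f$, handled above, and making sure every value involved is finite so that the arithmetic is legitimate. Finiteness holds because $f$ is proper and $x_{0},x,z\in\dom f$.
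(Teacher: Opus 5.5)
Your argument is correct, and it takes a more elementary route than the paper. The paper restricts $f$ to the segment via $\varphi(t):=f((1-t)x_{0}+tx)$. It then invokes a one-dimensional result (Theorem~2.1.13 in Z\u{a}linescu's book) giving $\varphi(1)-\varphi(0)>\varphi'_{+}(0)$ for strictly convex $\varphi$. Finally it passes through the directional derivative, $\varphi'_{+}(0)=f'(x_{0};x-x_{0})\geq\scal{v}{x-x_{0}}$.

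You avoid both directional derivatives and the cited theorem. If equality held, $f$ would touch its affine minorant $y\mapsto f(x_{0})+\scal{v}{y-x_{0}}$ at both $x_{0}$ and $x$. Strict convexity would then push $f$ strictly below that minorant at the midpoint, contradicting the subgradient inequality there. In effect you inline the midpoint argument behind the one-dimensional slope inequality, $\varphi'_{+}(0)\leq 2\bigl(\varphi(\tfrac12)-\varphi(0)\bigr)<\varphi(1)-\varphi(0)$. So your proof is self-contained and uses only the definitions.

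Your proof is also a little more careful. The paper declares $\varphi\colon[0,1]\to\RR$, which tacitly assumes $x\in\dom f$, whereas you dispose of the case $x\notin\dom f$ explicitly. What the paper's formulation buys is a derivative-based template that it reuses verbatim in the proof of Corollary~\ref{c:nice}.
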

\begin{proof}
Let $x\neq x_{0}$ and
$v\in\partial f(x_{0})$. The function
$$\varphi:[0,1]\rightarrow\RR: t\mapsto f((1-t)x_{0}+tx)$$ is strictly convex. Using
\cite[Theorem 2.1.13]{zalinescu} and $(\forall u\in\HH)\ f'(x_{0};u)\geq \scal{u}{v}$, we
obtain
$f(x)-f(x_{0})=\varphi(1)-\varphi(0)>\varphi'_{+}(0)
=f'(x_{0};x-x_{0})\geq \scal{v}{x-x_{0}}.$
\end{proof}

This gives the following well-known but one-way fact, see, e.g.,  \cite[Example 22.4(ii)]{Bauschke-Combettes-2017}.

\begin{theorem}\label{l:strictly:mono} Suppose that $f\in \CVF$ is strictly convex. Then
$\partial f$ is strictly monotone.
\end{theorem}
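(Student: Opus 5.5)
The plan is to apply Lemma~\ref{l:inequality:s} twice, once at each of two points of the graph, and add the two strict inequalities. Take $(x_{0},v_{0}),(x_{1},v_{1})\in\gra\partial f$ with $x_{0}\neq x_{1}$. Both points lie in $\dom\partial f\subset\dom f$, so $f(x_{0})$ and $f(x_{1})$ are real numbers, and the arithmetic below is legitimate (no $\infty-\infty$ issues).

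First, apply Lemma~\ref{l:inequality:s} at the base point $x_{0}$ with the test point $x=x_{1}\neq x_{0}$ and subgradient $v_{0}$:
\[
f(x_{1})>f(x_{0})+\scal{v_{0}}{x_{1}-x_{0}}.
\]
Second, apply it symmetrically at the base point $x_{1}$ with test point $x_{0}$ and subgradient $v_{1}$:
\[
f(x_{0})>f(x_{1})+\scal{v_{1}}{x_{0}-x_{1}}.
\]

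Adding the two inequalities and cancelling the finite quantity $f(x_{0})+f(x_{1})$ gives
\[
0>\scal{v_{0}}{x_{1}-x_{0}}+\scal{v_{1}}{x_{0}-x_{1}}=-\scal{v_{0}-v_{1}}{x_{0}-x_{1}}.
\]
Hence $\scal{x_{0}-x_{1}}{v_{0}-v_{1}}>0$, which is exactly strict monotonicity of $\partial f$.

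There is essentially no obstacle here. All the work, namely the strict subgradient inequality for a strictly convex function, was already done in Lemma~\ref{l:inequality:s}. The only point needing care is that finiteness of $f$ at the two points justifies the cancellation, and this holds because both points lie in $\dom\partial f$.
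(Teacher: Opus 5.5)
Your proof is correct and is essentially the paper's argument: apply Lemma~\ref{l:inequality:s} at $x_0$ (with $v_0$) and at $x_1$ (with $v_1$), then add the two strict inequalities. Your note that $x_0,x_1\in\dom\partial f\subset\dom f$, so that cancelling $f(x_0)+f(x_1)$ is legitimate, is a detail the paper leaves implicit.
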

\begin{proof} Let $x_{0}\neq x_{1}$ and $v_{i}\in\partial f(x_{i})$ with $i=0,1$. Applying Lemma~\ref{l:inequality:s}
gives
\begin{equation}\label{e:strict:1}
f(x_{1})>f(x_{0})+\scal{v_{0}}{x_{1}-x_{0}}, \text{ and }
\end{equation}
\begin{equation}\label{e:strict:2}
f(x_{0})>f(x_{1})+\scal{v_{1}}{x_{0}-x_{1}}.
\end{equation}
The conclusion follows by adding \eqref{e:strict:1} and \eqref{e:strict:2}.
%We prove by contradiction. Suppose $\partial f$ is not strictly monotone, i.e.,
% $\exists v_{0}\in\partial f(x_{0}), v_{1}\in\partial f(x_{1})$, $x_{0}\neq x_{1}$ such that
%$\scal{v_{0}-v_{1}}{x_{0}-x_{1}}=0.$
%Apply Lemma~\ref{l:not:strict}\ref{i:faffine}.
\end{proof}

The following result on a general convex function is of independent interest.
\begin{lemma}\label{l:affine:segment} Let $f\in \CVF$.
 Suppose that $f$ is affine on $[x_0,x_{1}]$. Then
$$(\forall x\in \left]x_{0},x_{1}\right[)\ \partial f(x)=\partial f(x_{0})\cap \partial f(x_{1}).$$
\end{lemma}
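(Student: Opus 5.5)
We prove the two inclusions separately. Fix $x\in\left]x_{0},x_{1}\right[$ and write $x=(1-\lambda)x_{0}+\lambda x_{1}$ with $\lambda\in\left]0,1\right[$. Since $f$ is affine on $[x_{0},x_{1}]$ and $f$ is proper, $f$ is finite on the whole segment. Put $d:=x_{1}-x_{0}$. Then there is $\alpha\in\RR$ with $f(x_{0}+td)=f(x_{0})+t\alpha$ for all $t\in[0,1]$, where $\alpha=f(x_{1})-f(x_{0})$.

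\emph{Inclusion ``$\supseteq$''.} Let $v\in\partial f(x_{0})\cap\partial f(x_{1})$. Applying the subgradient inequality at $x_{0}$ and at $x_{1}$ and taking the convex combination with weights $1-\lambda$ and $\lambda$ gives, for every $y\in\HH$,
\begin{equation*}
f(y)\geq (1-\lambda)f(x_{0})+\lambda f(x_{1})+\scal{v}{y-x}.
\end{equation*}
Because $f$ is affine on the segment, $(1-\lambda)f(x_{0})+\lambda f(x_{1})=f(x)$. Hence $v\in\partial f(x)$.

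\emph{Inclusion ``$\subseteq$''.} This is the main step. Let $v\in\partial f(x)$. Evaluating the subgradient inequality at $x_{0}$ and $x_{1}$ gives
\begin{equation*}
f(x_{0})-f(x)\geq\scal{v}{x_{0}-x}=-\lambda\scal{v}{d},\qquad f(x_{1})-f(x)\geq\scal{v}{x_{1}-x}=(1-\lambda)\scal{v}{d}.
\end{equation*}
By affineness, $f(x_{0})-f(x)=-\lambda\alpha$ and $f(x_{1})-f(x)=(1-\lambda)\alpha$. The two inequalities therefore give $\alpha\leq\scal{v}{d}$ and $\alpha\geq\scal{v}{d}$, so $\scal{v}{d}=\alpha$. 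It follows that
\begin{equation*}
f(x_{0})=f(x)-\lambda\alpha=f(x)+\scal{v}{x_{0}-x}.
\end{equation*}
Now take any $y\in\HH$. Then
\begin{equation*}
f(y)\geq f(x)+\scal{v}{y-x}=f(x)+\scal{v}{x_{0}-x}+\scal{v}{y-x_{0}}=f(x_{0})+\scal{v}{y-x_{0}},
\end{equation*}
so $v\in\partial f(x_{0})$. The same argument with the roles of $x_{0}$ and $x_{1}$ exchanged gives $f(x_{1})=f(x)+\scal{v}{x_{1}-x}$ and hence $v\in\partial f(x_{1})$.

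The only point needing care is the identity $\scal{v}{d}=\alpha$: it uses that $x$ lies strictly inside the segment, so that both $\lambda$ and $1-\lambda$ are positive and we may divide by them. This identity makes the affine minorant $y\mapsto f(x)+\scal{v}{y-x}$ touch $f$ at both endpoints, and that is exactly what moves the subgradient from $x$ to $x_{0}$ and $x_{1}$.
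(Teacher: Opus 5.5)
Your proof is correct. For the inclusion $\partial f(x)\subseteq\partial f(x_{0})\cap\partial f(x_{1})$ you follow essentially the paper's argument. The paper restricts $f$ to the line via $g(t)=f(\barx+t(x_{1}-x_{0}))-f(\barx)$ and observes that a linear function dominating $t\mapsto t\scal{v}{x_{1}-x_{0}}$ on an interval with $0$ in its interior must coincide with it. You make the same squeeze explicit by testing the subgradient inequality at the two endpoints and dividing by $\lambda>0$ and $1-\lambda>0$. The subsequent transfer of the subgradient inequality from $x$ to $x_{0}$ and $x_{1}$ is identical.

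The reverse inclusion is where you take a different route. The paper argues by duality: it notes $x_{0},x_{1}\in\partial f^*(v)$ and uses convexity of $\partial f^*(v)$ together with $\partial f^*=(\partial f)^{-1}$, which is where $f\in\Gamma_{0}(\HH)$, in particular lower semicontinuity, enters. You simply average the two subgradient inequalities. Your version is more elementary and self-contained. It also does not need the affineness hypothesis, since convexity alone gives $(1-\lambda)f(x_{0})+\lambda f(x_{1})\geq f(x)$. The paper's formulation, in turn, anticipates the set-valued viewpoint (convexity of the sets $(\partial f)^{-1}(v)$) that it reuses later for paramonotone operators.
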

\begin{proof}
Fix $\barx\in \left]x_{0},x_{1}\right[$.
We first show $\partial f(\barx)\subset\partial f(x_{0})\cap \partial f(x_{1})$.
Let $v\in\partial f(\barx)$ (if nonempty).
Define $g:\RR\rightarrow\EXR$ by
$t\mapsto f(\barx+t(x_{1}-x_{0}))-f(\barx)$. Then $g$ is linear on the interval $[\alpha,\beta]$ with
$\barx+\alpha (x_{1}-x_{0})=x_{0}$ and $\barx+\beta (x_{1}-x_{0})=x_{1}$;
in particular, $0\in \left]\alpha,\beta\right[$ because
of $\barx\in \left]x_{0},x_{1}\right[$. By the convexity of $f$,
$g(t)\geq \scal{v}{\bar{x}+t(x_{1}-x_{0})-\barx}=t\scal{v}{x_{1}-x_{0}}$ on $[\alpha,\beta]$.
Since $0\in \left]\alpha,\beta\right[$,
we deduce
$g(t)=t\scal{v}{x_{1}-x_{0}}$ on $[\alpha,\beta]$. It follows that
\begin{equation*}
(\forall t\in[\alpha,\beta])\ f(\barx+t(x_{1}-x_{0}))-f(\barx)=t\scal{v}{x_{1}-x_{0}}=\scal{v}{\barx+t(x_{1}-x_{0})-\barx},
\end{equation*}
i.e., $(\forall y\in [x_{0},x_{1}])\ f(y)-f(\barx)=\scal{v}{y-\barx}.$
In particular,
\begin{align*}
f(x_{1})-f(\barx) &=\scal{v}{x_{1}-\barx}, \text{ and }\\
f(x_{0})-f(\barx) &=\scal{v}{x_{0}-\barx}.
\end{align*}
Now
\begin{align*}
(\forall y\in\RR^n)\
f(y)& \geq f(\barx)+\scal{v}{y-\barx}=f(\barx)+\scal{v}{y-x_{0}}+\scal{v}{x_{0}-\barx}\\
&=f(x_{0})+\scal{v}{y-x_{0}},
\end{align*}
so $v\in\partial f(x_{0}).$ Similarly,
\begin{align*}
(\forall y\in\RR^n)\
f(y)& \geq f(\barx)+\scal{v}{y-\barx}=f(\barx)+\scal{v}{y-x_{1}}+\scal{v}{x_{1}-\barx}\\
&=f(x_{1})+\scal{v}{y-x_{1}},
\end{align*}
so $v\in\partial f(x_{1}).$ Hence $v\in\partial f(x_{0})\cap \partial f(x_{1})$.

Conversely, we show $\partial f(x_{0})\cap \partial f(x_{1})\subset \partial f(\barx)$.
Let $v\in\partial f(x_{0})\cap \partial f(x_{1})$ (if nonempty). This gives
$x_{0}, x_{1}\in \partial f^*(v)$ so that $[x_{0}, x_{1}]\subset\partial f^*(v)$, in particular,
$\barx \in \partial f^*(v)$ , i.e., $v\in\partial f(\barx)$.
%Since $\partial f$ is paramonotone, $x_{0}, x_{1},\barx\in\dom \partial f$,
%$\barx\in ]x_{0},x_{1}[$ and $v\in\partial f(x_{0})\cap \partial f(x_{1})$,
%by \cite[Proposition 22.10(ii)]{Bauschke-Combettes-2017}, we have
%$\partial f(\barx)=\partial f(x_{0})\cap \partial f(x_{1})$,
%so $v\in\partial f(\barx)$.
\end{proof}

This leads to the following amazing result on $\RR$.

\begin{corollary}
Consider $f:\RR\rightarrow \EXR$. Then $f$ is strictly convex if and only if
$\partial f$ is strictly monotone.
\end{corollary}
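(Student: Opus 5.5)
The forward implication is Theorem~\ref{l:strictly:mono}, so only the converse needs work. Presumably $f\in\Gamma_{0}(\RR)$ is intended, since $\partial f$ and its monotonicity are defined in the paper only for such functions. Assume $\partial f$ is strictly monotone. By Theorem~\ref{t:line:same} it suffices to show that $f$ is strictly convex on $\inte\dom f$. If $\inte\dom f=\varnothing$, then $\dom f$ is a single point and $f$ is trivially strictly convex, so we may assume $\inte\dom f$ is a nonempty open interval.

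Suppose, to get a contradiction, that $f$ is not strictly convex on $\inte\dom f$. Then there are $x_{0}<x_{1}$ in $\inte\dom f$ and some $\lambda\in\left]0,1\right[$ with equality in the convexity inequality. By convexity of the one-variable function this forces $f$ to be affine on the whole segment $[x_{0},x_{1}]$: the graph on $[x_{0},x_{1}]$ lies below the chord and touches it at an interior point. (This one-dimensional fact is standard; alternatively, invoke the three-slope inequality.)

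Since $[x_{0},x_{1}]\subset\inte\dom f$, the set $\partial f(\barx)$ is nonempty for each $\barx\in\left]x_{0},x_{1}\right[$. Pick such a point $\barx$ and some $v\in\partial f(\barx)$. Lemma~\ref{l:affine:segment} gives $v\in\partial f(x_{0})\cap\partial f(x_{1})$. Hence $(x_{0},v),(x_{1},v)\in\gra\partial f$ with $x_{0}\neq x_{1}$, and
\[
\scal{x_{0}-x_{1}}{v-v}=0,
\]
which contradicts strict monotonicity. Therefore $f$ is strictly convex on $\inte\dom f$, and Theorem~\ref{t:line:same} upgrades this to strict convexity on all of $\dom f$.

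The one genuinely delicate point is the boundary of $\dom f$, where $\partial f$ may be empty. That is exactly why the argument passes through Theorem~\ref{t:line:same} rather than working directly with endpoints. The argument also uses that on $\RR$ the interior of the domain is convex and lies in $\dom\partial f$, which is what fails in $\RR^2$.
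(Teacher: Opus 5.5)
Your proof is correct and follows the paper's argument: subdifferentiability on $\inte\dom f$, Lemma~\ref{l:affine:segment} to exclude an affine segment there, then Theorem~\ref{t:line:same} to pass to all of $\dom f$. You also supply details the paper leaves implicit, namely that equality at a single $\lambda$ forces affinity and the degenerate case $\inte\dom f=\varnothing$. One small correction to your closing remark: in $\RR^2$ the interior of the domain is still convex and contained in $\dom\partial f$, so what actually fails there is the boundary step (ii)$\Rightarrow$(iii) of Theorem~\ref{t:line:same}, as Example~\ref{e:almost:strict} shows.
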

\begin{proof}
``$\Rightarrow$": Suppose that $f$ is strictly convex. Apply Theorem~\ref{l:strictly:mono}.
``$\Leftarrow$": Suppose that $\partial f$ is strictly monotone. Observe that $f$ is subdifferentiable on
the open interval $\inte\dom f$. The assumption implies that $f$ is strictly convex on
$\inte\dom f$ by Lemma~\ref{l:affine:segment}. It remains to apply Theorem~\ref{t:line:same}.
\end{proof}

The following example by Rockafellar \cite[page 253]{Rock70}
shows that the converse of Theorem~\ref{l:strictly:mono}
fails on $\RR^n$ when $n\geq 2$.
\begin{example}\label{e:almost:strict}
Define $f:\RR^2\rightarrow\EXR$ by
$$(x_{1},x_{2})\mapsto \begin{cases}
x_{2}^2/(2x_{1})-2x_{2}^{1/2}, &\text{ if $x_{1}>0$, $x_{2}\geq 0$;}\\
0, &\text{ if $x_{1}=x_{2}=0$;}\\
+\infty, &\text{ otherwise.}
\end{cases}
$$
Then
\begin{enumerate}
\item $\dom \partial f=\dom\nabla f=\RR^{2}_{++}$.
\item $\partial f$ is strictly monotone.
\item While $f$ is strictly convex on $\dom \partial f=\RR^2_{++}$,
 $f(x_{1},0)=0$ for $x_{1}\geq 0$ is not strictly convex.
Hence $f$ is not strictly convex.
\end{enumerate}
\end{example}
\begin{remark} \
\begin{enumerate}
\item Example~\ref{e:almost:strict} also shows that for $f\in \CVF$, the condition
$$(\forall y\in\HH\setminus\{x\})(\forall v\in\partial f(x))\ f(y)>f(x)+\scal{v}{y-x}$$
does not necessarily imply that $f$ is strictly convex. That is,
Fact~\ref{f:strict:convex}\ref{i:tangentline} fails for a general convex function.
The reason is that $\dom \partial f$ might not
be equal to $\dom f$.
\item Example~\ref{e:almost:strict} also shows that Theorem~\ref{t:main1} fails
without condition~\eqref{e:pmeasure}. That is,  $\nabla^2 f$ being positive definite
almost everywhere does not imply that $f$ is strictly convex.
\end{enumerate}
\end{remark}

For a finite-valued convex function on an open convex set, the
following pleasant result,
which generalizes Fact~\ref{f:strict:convex},  holds.
\begin{corollary}\label{c:nice}
Let $f\in\CVF$ and let $\dom f$ contain an open convex subset $O$ of $\HH$.
Then the following are equivalent:
\begin{enumerate}
\item\label{i:newyear1} $f$ is strictly convex on $O$.
\item\label{i:newyear2} $\partial f$ is strictly monotone on $O$.
\item\label{i:newyear3}  $f(x_{1})>f(x_{0})+\scal{v}{x_{1}-x_{0}}$ for all $x_{1},x_{0}\in O$ with
$x_{1}\neq x_{0}$ and
$v\in\partial f(x_{0})$.
\end{enumerate}
\end{corollary}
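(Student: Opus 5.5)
The plan is to prove the cycle \ref{i:newyear1}$\Rightarrow$\ref{i:newyear3}$\Rightarrow$\ref{i:newyear2}$\Rightarrow$\ref{i:newyear1}. The key preliminary observation is that $O\subseteq\inte\dom f$, because $O$ is open and contained in $\dom f$. Since $f\in\CVF$, $f$ is continuous on $\inte\dom f$ and $\partial f(x)\neq\varnothing$ for every $x\in O$, so $O\subseteq\dom\partial f$. Moreover, every segment $[x_0,x_1]$ with $x_0,x_1\in O$ lies in $O$ by convexity. This is exactly what fails in Example~\ref{e:almost:strict}, and it is what lets all three conditions be tested on segments of $O$.

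For \ref{i:newyear1}$\Rightarrow$\ref{i:newyear3}, I would repeat the proof of Lemma~\ref{l:inequality:s} with $x$ restricted to $O$. Fix $x_0\neq x_1$ in $O$ and $v\in\partial f(x_0)$. The function $\varphi(t)=f((1-t)x_0+tx_1)$ is strictly convex on $[0,1]$, because the whole segment lies in $O$. By \cite[Theorem 2.1.13]{zalinescu},
\[
\varphi(1)-\varphi(0)>\varphi'_+(0)=f'(x_0;x_1-x_0)\geq\scal{v}{x_1-x_0}.
\]
For \ref{i:newyear3}$\Rightarrow$\ref{i:newyear2}, take $x_0\neq x_1$ in $O$ and $v_i\in\partial f(x_i)$, write the strict inequality \ref{i:newyear3} at both points, and add them, exactly as in Theorem~\ref{l:strictly:mono}.

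For \ref{i:newyear2}$\Rightarrow$\ref{i:newyear1}, I would argue by contradiction. Suppose $f$ is not strictly convex on $O$. Then there are $x_0\neq x_1$ in $O$ and $\lambda\in\left]0,1\right[$ with equality in the convexity inequality. By the standard one-dimensional fact (a convex function on an interval that meets its chord at an interior point coincides with the chord), $f$ is affine on $[x_0,x_1]$. Choose $\barx\in\left]x_0,x_1\right[$. Since $\barx\in O$, we have $\partial f(\barx)\neq\varnothing$, so we can pick $v\in\partial f(\barx)$. Lemma~\ref{l:affine:segment} then gives $v\in\partial f(x_0)\cap\partial f(x_1)$. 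Consequently $(x_0,v),(x_1,v)\in\gra\partial f$ with $x_0\neq x_1$ and $\scal{x_0-x_1}{v-v}=0$, which contradicts the strict monotonicity of $\partial f$ on $O$.

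The main obstacle is the last implication, specifically two points in it. First, one must justify that $O\subseteq\dom\partial f$, which uses continuity of $f$ on $\inte\dom f$ (true for $f\in\CVF$ in a Hilbert space). Second, one must pass cleanly from a single equality in the convexity inequality to affineness on the whole segment. If the latter causes trouble, I would instead work with the subsegment $[x_0,\lambda x_0+(1-\lambda)x_1]$, on which affineness follows directly from the equality via the slopes of the convex function $\varphi$, and then apply Lemma~\ref{l:affine:segment} to that subsegment.
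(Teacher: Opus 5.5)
Your proof is correct, but for the key direction it takes a different route from the paper. The paper proves \ref{i:newyear1}$\Rightarrow$\ref{i:newyear3} exactly as you do. It then proves \ref{i:newyear3}$\Rightarrow$\ref{i:newyear1} directly: pick $v\in\partial f((1-t)x_0+tx_1)$, apply \ref{i:newyear3} from that point to $x_0$ and to $x_1$, and take the convex combination. It gets \ref{i:newyear2}$\Rightarrow$\ref{i:newyear3} from the subdifferential mean value theorem. That theorem gives $t\in\left]0,1\right[$ and $u\in\partial f(x_0+t(x_1-x_0))$ with $f(x_1)-f(x_0)=\scal{u}{x_1-x_0}$, hence $f(x_1)-f(x_0)-\scal{v}{x_1-x_0}=t^{-1}\scal{u-v}{(x_0+t(x_1-x_0))-x_0}>0$.

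You instead close the cycle with \ref{i:newyear2}$\Rightarrow$\ref{i:newyear1} by contradiction. Equality in the convexity inequality forces $f$ to be affine on $[x_0,x_1]$. Lemma~\ref{l:affine:segment} then turns a subgradient at an interior point into a common subgradient at $x_0$ and $x_1$, which violates strict monotonicity.

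What your route buys:
\begin{enumerate}
\item It avoids the mean value theorem and uses only one-dimensional convexity plus a lemma already in the paper.
\item It is exactly the mechanism of Lemma~\ref{l:strictmono}, so it shows that openness of $O$ is used only to guarantee $[x_0,x_1]\subseteq\dom\partial f$. Your whole cycle therefore carries over verbatim to any convex subset of $\dom\partial f$, anticipating Theorem~\ref{t:almost}.
\end{enumerate}
The paper's route is direct rather than by contradiction, and it derives the strict subgradient inequality \ref{i:newyear3} straight from \ref{i:newyear2}. Your chord fact (a convex function agreeing with its chord at one interior point of a segment is affine on it) is standard and correct, so your fallback to a subsegment is not needed.
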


\begin{proof} According to \cite[Corollary 8.39]{Bauschke-Combettes-2017}, $f$ is continuous
on the open convex set $O$. This ensures that $(\forall x\in O)\ \partial f(x)\neq\varnothing$
by \cite[Proposition 16.17(ii)]{Bauschke-Combettes-2017}.

``\ref{i:newyear1}$\Rightarrow$\ref{i:newyear3}'': Let $x_{0}\neq x_{1}$ and
$v\in\partial f(x_{0})$. The function
$$\varphi:[0,1]\rightarrow\RR: t\mapsto f((1-t)x_{0}+tx_{1})$$ is strictly convex. Using
\cite[Theorem 2.1.13]{zalinescu} and $(\forall u\in\HH)\ f'(x_{0},u)\geq \scal{u}{v}$, we
obtain
$\varphi(1)-\varphi(0)>\varphi'_{+}(0)
=f'(x_{0},x_{1}-x_{0})\geq \scal{v}{x_{1}-x_{0}}.$

``\ref{i:newyear3}$\Rightarrow$\ref{i:newyear1}":
Let $x_{0}\neq x_{1}$ and $t\in \left]0,1\right[$. Take $v\in\partial f((1-t)x_{0}+tx_{1})$, which is possible because $(1-t)x_{0}+tx_{1}\in O$.
We have
\begin{equation}\label{e:strict1}
f(x_{0})> f((1-t)x_{0}+tx_{1})+\scal{v}{x_{0}-((1-t)x_{0}+tx_{1})}=f((1-t)x_{0}+tx_{1})+t\scal{v}{x_{0}-x_{1})},
\end{equation}
\begin{equation}\label{e:strict2}
f(x_{1})> f((1-t)x_{0}+tx_{1})+\scal{v}{x_{1}-((1-t)x_{0}+tx_{1})}=f((1-t)x_{0}+tx_{1})+(1-t)\scal{v}{x_{1}-x_{0})}.
\end{equation}
Multiplying \eqref{e:strict1} by $(1-t)$ and \eqref{e:strict2} by $t$, followed by adding them, we obtain
$$(1-t)f(x_{0})+t f(x_{1})> f((1-t)x_{0}+tx_{1}),$$
as required.

%Combine Lemma~\ref{l:domandsub} and Corollary~\ref{c:strict:cm}.

``\ref{i:newyear2}$\Rightarrow$\ref{i:newyear3}": By the
Mean Value Theorem \cite[Theorem 16.56]{Bauschke-Combettes-2017},
$\exists t\in \left]0,1\right[$ and $u\in\partial f(x_{0}+t(x_{1}-x_{0}))$
such that $f(x_{1})-f(x_{0})=\scal{u}{x_{1}-x_{0}}$.
Then for every $v\in\partial f(x_{0})$,
\begin{align*}
& f(x_{1})-f(x_{0})-\scal{v}{x_{1}-x_{0}}\\
&=\scal{u-v}{x_{1}-x_{0}}=t^{-1}\scal{u-v}{(x_{0}+t(x_{1}-x_{0}))-x_{0}}>0
\end{align*}
by the assumption.

``\ref{i:newyear3}$\Rightarrow$\ref{i:newyear2}": This is clear.
\end{proof}

\section{Almost strictly convex functions on $\HH$}\label{s:almostfunction}
This section concerns characterizations of almost strictly convex functions
via subdifferential mappings.
We begin with the following simple result, which characterize an almost strict convex function via
its subgradient inequalities.
\begin{theorem}\label{t:inequality:s} Let $f\in\CVF$. Then the following are equivalent:
\begin{enumerate}
\item\label{i:almostsf} $f$ is almost strictly convex.
\item\label{i:sgradient}
 For all $[x_{0},x_{1}]\subset\dom\partial f$, $x_{0}\neq x_{1}$, $v\in\partial f(x_{0})$,
we have
$f(x_{1})>f(x_{0})+\scal{v}{x_{1}-x_{0}}.$
\end{enumerate}
\end{theorem}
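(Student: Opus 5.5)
The plan is to mimic the proofs of Lemma~\ref{l:inequality:s} and of ``\ref{i:newyear1}$\Leftrightarrow$\ref{i:newyear3}'' in Corollary~\ref{c:nice}. The only change is that everything is restricted to segments lying entirely in $\dom\partial f$, which is exactly the setting in the definition of almost strict convexity.

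For ``\ref{i:almostsf}$\Rightarrow$\ref{i:sgradient}'', fix $[x_{0},x_{1}]\subset\dom\partial f$ with $x_{0}\neq x_{1}$, and fix $v\in\partial f(x_{0})$. By almost strict convexity, $f$ is strictly convex on this segment. Hence the function
$$\varphi:[0,1]\rightarrow\RR: t\mapsto f((1-t)x_{0}+tx_{1})$$
is real-valued and strictly convex, since the segment lies in $\dom\partial f\subset\dom f$. As in Lemma~\ref{l:inequality:s}, \cite[Theorem 2.1.13]{zalinescu} gives $\varphi(1)-\varphi(0)>\varphi'_{+}(0)$. Moreover,
$$\varphi'_{+}(0)=f'(x_{0};x_{1}-x_{0})\geq\scal{v}{x_{1}-x_{0}},$$
so $f(x_{1})>f(x_{0})+\scal{v}{x_{1}-x_{0}}$. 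An alternative that avoids the one-dimensional fact is to argue by contradiction. Suppose equality held. Since $v$ is a subgradient at $x_{0}$, the convex function $f-\scal{v}{\cdot}$ would take the same minimal value at $x_{0}$ and $x_{1}$. It would then be constant on $[x_{0},x_{1}]$, so $f$ would be affine there, contradicting strict convexity on the segment.

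For ``\ref{i:sgradient}$\Rightarrow$\ref{i:almostsf}'', take a segment $[x_{0},x_{1}]\subset\dom\partial f$ with $x_{0}\neq x_{1}$, and let $t\in\left]0,1\right[$. Put $x_{t}:=(1-t)x_{0}+tx_{1}$, and pick $v\in\partial f(x_{t})$, which exists because $x_{t}\in\dom\partial f$.

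The key observation is that $[x_{t},x_{0}]$ and $[x_{t},x_{1}]$ are sub-segments of $[x_{0},x_{1}]$. They therefore also lie in $\dom\partial f$, so \ref{i:sgradient} applies with base point $x_{t}$ and gives
$$f(x_{0})>f(x_{t})+t\scal{v}{x_{0}-x_{1}},\qquad f(x_{1})>f(x_{t})+(1-t)\scal{v}{x_{1}-x_{0}}.$$
Multiplying the first inequality by $1-t$, the second by $t$, and adding cancels the inner-product terms exactly as in \eqref{e:strict1}--\eqref{e:strict2}. This yields $(1-t)f(x_{0})+tf(x_{1})>f(x_{t})$, which is strict convexity along the segment.

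I do not expect a real obstacle. The only points needing care are that all values on a segment contained in $\dom\partial f$ are finite, and that \ref{i:sgradient} applies to the sub-segments, which is immediate from convexity of the segment.
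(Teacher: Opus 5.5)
Your proposal is correct and takes essentially the same approach as the paper. The paper's proof says only that the argument is the same as for Corollary~\ref{c:nice}\ref{i:newyear1}$\Leftrightarrow$\ref{i:newyear3} restricted to segments in $\dom\partial f$, and your write-up supplies the detail that transfer needs: the sub-segments $[x_t,x_0]$ and $[x_t,x_1]$ also lie in $\dom\partial f$, so the subgradient at the intermediate point $x_t$ can be used.
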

\begin{proof}
The proof is similar to
that of Corollary~\ref{c:nice}\ref{i:newyear1}$\Leftrightarrow$\ref{i:newyear3}.
%$\ref{i:sgradient}\Rightarrow\ref{i:sgradient:all}$:
\end{proof}

Characterization of an almost strictly convex function via strict monotonicity of
its subdifferential mapping is beautifully clean,
as the following fact due to Rockafellar and Wets shows.

\begin{fact}\emph{(\cite[Theorem 12.17]{RW})}\label{f:rock-wets}
For any proper, convex function
$f:\RR^n\rightarrow\EXR$, the mapping $\partial f:\RR^n \to 2^{\RR^n}$ is monotone.
Indeed, a proper, lsc function $f$ is convex if and only if $\partial f$ is monotone,
in which case $\partial f$ is maximal monotone.
Such a function $f$ is almost strictly
convex if and only if $\partial f$ is strictly monotone.
\end{fact}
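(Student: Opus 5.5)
The monotonicity of $\partial f$ for a proper convex $f$ is immediate: add the two subgradient inequalities $f(x_1)\geq f(x_0)+\scal{v_0}{x_1-x_0}$ and $f(x_0)\geq f(x_1)+\scal{v_1}{x_0-x_1}$. For the converse (a proper lsc $f$ with monotone $\partial f$ is convex) and for maximality, I would simply invoke the standard results: maximal monotonicity of subdifferentials of functions in $\Gamma_0$ (cf.\ \cite{Rock70}), and the characterization of convexity of a proper lsc function by monotonicity of its (limiting) subdifferential in \cite{RW}. I would not reprove these. The substance of the plan is the last sentence, for $f\in\Gamma_0(\RR^n)$: $f$ is almost strictly convex if and only if $\partial f$ is strictly monotone.

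\emph{($\Leftarrow$)} Assume $\partial f$ is strictly monotone and let $[x_0,x_1]\subset\dom\partial f$ with $x_0\neq x_1$. Suppose, for contradiction, that $f$ is not strictly convex on $[x_0,x_1]$. Then, by convexity, $f$ is affine on some nondegenerate subsegment $[y_0,y_1]\subset[x_0,x_1]$, which still lies in $\dom\partial f$. Pick $\barx\in\left]y_0,y_1\right[$ and $v\in\partial f(\barx)$, which exists since $\barx\in\dom\partial f$. By Lemma~\ref{l:affine:segment}, $v\in\partial f(y_0)\cap\partial f(y_1)$. Then $(y_0,v),(y_1,v)\in\gra\partial f$ with $y_0\neq y_1$, yet $\scal{y_0-y_1}{v-v}=0$, contradicting strict monotonicity.

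\emph{($\Rightarrow$)} Assume $f$ is almost strictly convex and take $(x_i,v_i)\in\gra\partial f$ with $x_0\neq x_1$ and $\scal{x_0-x_1}{v_0-v_1}=0$. The key is paramonotonicity of $\partial f$. Adding the subgradient inequalities shows both are equalities, for example $f(x_1)=f(x_0)+\scal{v_0}{x_1-x_0}$. From this, for any $y$,
$$f(y)\geq f(x_1)+\scal{v_1}{y-x_1}=f(x_0)+\scal{v_0}{x_1-x_0}+\scal{v_1}{y-x_1},$$
and a short rearrangement using $\scal{v_0-v_1}{x_1-x_0}=0$ gives $f(y)\geq f(x_0)+\scal{v_1}{y-x_0}$. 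Hence $v_1\in\partial f(x_0)$, and symmetrically $v_0\in\partial f(x_1)$. So $v_0\in\partial f(x_0)\cap\partial f(x_1)$, i.e.\ $x_0,x_1\in\partial f^*(v_0)$. Since $\partial f^*(v_0)$ is convex, $[x_0,x_1]\subset\partial f^*(v_0)$, which gives $v_0\in\partial f(x)$ for every $x\in[x_0,x_1]$. Thus $[x_0,x_1]\subset\dom\partial f$, and $f-\scal{v_0}{\cdot}$ attains its global minimum on the whole segment. So $f$ is affine on a segment contained in $\dom\partial f$, contradicting almost strict convexity. This is exactly the argument in the proof of the unique-minimizer theorem of Section~\ref{s:strict:almosts}.

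The main obstacle is the paramonotonicity step: showing that $\scal{x_0-x_1}{v_0-v_1}=0$ forces $v_0,v_1$ to be common subgradients at $x_0$ and $x_1$. Once that holds, convexity of $\partial f^*(v_0)$ carries the segment into $\dom\partial f$. This is what allows the argument to avoid any assumption that $\dom\partial f$ is convex. It also uses nothing finite-dimensional, so the argument should carry over verbatim to $\HH$.
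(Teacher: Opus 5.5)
Your proposal is correct and follows essentially the paper's own route. The paper cites Rockafellar--Wets for this Fact, but proves its last sentence in a general Hilbert space as Theorem~\ref{t:almost}. There, your ($\Leftarrow$) direction is Lemma~\ref{l:strictmono} via Lemma~\ref{l:affine:segment}. Your ($\Rightarrow$) direction is Lemma~\ref{l:almoststrict} combined with Lemma~\ref{l:not:strict}, which likewise uses paramonotonicity and convexity of $\partial f^*(v_0)$ (equivalently of $\argmin(f-\scal{v_0}{\cdot})$) to place $[x_0,x_1]$ inside $\dom\partial f$.

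The differences are cosmetic. You verify $v_1\in\partial f(x_0)$ by direct rearrangement rather than through the Fenchel--Young equality. You also go straight to full strict monotonicity instead of through the intermediate condition ``strictly monotone on convex subsets of $\dom\partial f$''.
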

%We will show that ``A proper, lsc, and convex function $f:\RR^n\rightarrow\EXR$
%is almost strictly convex if and only if $\partial f$ is
%strictly monotone" is wrong!

%A correct statement comes as follows.
Below is our main result in this section, which gives
 some new characterizations of almost strictly convex functions
 via subdifferential mappings
in a general Hilbert space.
Moreover, the result extends Fact~\ref{f:rock-wets} to a general Hilbert space.
\begin{theorem}\label{t:almost} Let $f\in \CVF$. Then
the following are equivalent:
\begin{enumerate}
\item\label{i:falmost} $f$
is almost strictly convex.
\item\label{i:almost} $\partial f$ is
strictly monotone on any convex subset of $\dom\partial f$.
\item\label{i:full} $\partial f$ is strictly monotone.
\end{enumerate}
\end{theorem}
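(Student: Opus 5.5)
We would prove the cycle \ref{i:full}$\Rightarrow$\ref{i:almost}$\Rightarrow$\ref{i:falmost}$\Rightarrow$\ref{i:full}. The step ``\ref{i:full}$\Rightarrow$\ref{i:almost}'' is immediate, since strict monotonicity on all of $\dom\partial f$ restricts to any subset.

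For ``\ref{i:almost}$\Rightarrow$\ref{i:falmost}'', suppose $f$ is not almost strictly convex. Then there is a segment $[x_0,x_1]\subset\dom\partial f$ with $x_0\neq x_1$ on which $f$ fails to be strictly convex. Since a convex function on a segment that is not strictly convex is affine on some nondegenerate subsegment, we may shrink and assume $f$ is affine on $[x_0,x_1]\subset\dom\partial f$. Pick $\barx\in\left]x_0,x_1\right[$ and $v\in\partial f(\barx)$, which exists because $\barx\in\dom\partial f$. Lemma~\ref{l:affine:segment} then gives $v\in\partial f(x_0)\cap\partial f(x_1)$. Consequently $\scal{x_0-x_1}{v-v}=0$ on the convex set $[x_0,x_1]\subset\dom\partial f$, which contradicts \ref{i:almost}.

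The main step is ``\ref{i:falmost}$\Rightarrow$\ref{i:full}'', because the points $x_0,x_1$ need not be joined by a segment lying in $\dom\partial f$. Suppose $x_0\neq x_1$, $v_i\in\partial f(x_i)$, and $\scal{x_0-x_1}{v_0-v_1}=0$. We use paramonotonicity of $\partial f$, proved directly as follows. Adding the subgradient inequalities $f(x_1)\geq f(x_0)+\scal{v_0}{x_1-x_0}$ and $f(x_0)\geq f(x_1)+\scal{v_1}{x_0-x_1}$ gives $0\geq\scal{v_1-v_0}{x_0-x_1}=0$, so both inequalities are equalities. Hence $f(x_1)=f(x_0)+\scal{v_0}{x_1-x_0}$. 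It follows that for all $y$,
\[
f(y)\geq f(x_0)+\scal{v_0}{y-x_0}=f(x_1)+\scal{v_0}{y-x_1},
\]
so $v_0\in\partial f(x_1)$, and therefore $v_0\in\partial f(x_0)\cap\partial f(x_1)$. Thus $x_0,x_1\in\partial f^*(v_0)$. This set is convex, so $[x_0,x_1]\subset\partial f^*(v_0)$, which means $v_0\in\partial f(x)$ for every $x\in[x_0,x_1]$. In particular $[x_0,x_1]\subset\dom\partial f$.

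Moreover, $f-\scal{v_0}{\cdot}$ attains its global minimum at every point of $[x_0,x_1]$, so $f$ is affine on this segment. This contradicts almost strict convexity, exactly as in the proof of the unique-minimizer theorem. Hence $\scal{x_0-x_1}{v_0-v_1}>0$, and $\partial f$ is strictly monotone. The key obstacle, the possible non-convexity of $\dom\partial f$, is overcome by paramonotonicity, which manufactures a segment inside $\dom\partial f$.
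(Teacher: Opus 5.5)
Your proof is correct and is essentially the paper's argument: your \ref{i:almost}$\Rightarrow$\ref{i:falmost} is exactly Lemma~\ref{l:strictmono} (via Lemma~\ref{l:affine:segment}), and your direct \ref{i:falmost}$\Rightarrow$\ref{i:full} merges Lemma~\ref{l:almoststrict} with the paper's \ref{i:almost}$\Rightarrow$\ref{i:full} step, using the same ingredients as Lemma~\ref{l:not:strict}. Those ingredients are equality in the subgradient inequality, $v_0\in\partial f(x_1)$, convexity of $\partial f^*(v_0)$ to place $[x_0,x_1]$ inside $\dom\partial f$, and affinity of $f$ on that segment; your remark about shrinking to an affine subsegment also fills in a detail the paper leaves implicit.
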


To show this, we need a few auxiliary results.

%\begin{theorem} A proper, lsc, and convex function $f:\RR^n\rightarrow\RX$
%is strictly convex if and only if $\partial f$ is
%strictly monotone.
%\end{theorem}

\begin{lemma}\label{l:not:strict}
Let $f\in \CVF$ and let
$\scal{v_{0}-v_{1}}{x_{0}-x_{1}}=0$ for $(x_{i},v_{i})\in\gra \partial f$ with $i=0,1$.
Then the following hold:
\begin{enumerate}
\item\label{i:faffine} $f$ is affine on $[x_{0},x_{1}]$.
\item\label{i:insubdomain} $[v_{0},v_{1}]\subset\partial f(x)$ for every $x\in [x_{0},x_{1}]$.
In particular, $[x_{0},x_{1}]\subset\dom\partial f$.
\item\label{i:inside} For every $x\in ]x_{0},x_{1}[$, $\partial f(x)=\partial f(x_{0})\cap\partial f(x_{1})\supset [v_{0},v_{1}]$.
\end{enumerate}
\end{lemma}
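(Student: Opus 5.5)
The plan is to exploit the two subgradient inequalities together with the hypothesis $\scal{v_{0}-v_{1}}{x_{0}-x_{1}}=0$ to force equality in both. From $v_{0}\in\partial f(x_{0})$ and $v_{1}\in\partial f(x_{1})$ we have
$$f(x_{1})\geq f(x_{0})+\scal{v_{0}}{x_{1}-x_{0}},\qquad f(x_{0})\geq f(x_{1})+\scal{v_{1}}{x_{0}-x_{1}}.$$
Adding them gives $0\geq \scal{v_{0}-v_{1}}{x_{1}-x_{0}}=0$, so both inequalities must be equalities. Hence $f(x_{1})-f(x_{0})=\scal{v_{0}}{x_{1}-x_{0}}=\scal{v_{1}}{x_{1}-x_{0}}$.

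For \ref{i:faffine}, set $x_{t}:=(1-t)x_{0}+tx_{1}$. Convexity gives the upper bound $f(x_{t})\leq (1-t)f(x_{0})+tf(x_{1})$. The subgradient inequality at $x_{0}$ with $v_{0}$ gives the lower bound
$$f(x_{t})\geq f(x_{0})+t\scal{v_{0}}{x_{1}-x_{0}}=f(x_{0})+t\big(f(x_{1})-f(x_{0})\big),$$
which equals the upper bound. So $f$ is affine on $[x_{0},x_{1}]$.

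For \ref{i:insubdomain}, I would first show $v_{0}\in\partial f(x_{t})$ for every $t\in[0,1]$. For any $y$,
$$f(y)\geq f(x_{0})+\scal{v_{0}}{y-x_{0}}=f(x_{t})+\scal{v_{0}}{y-x_{t}},$$
where the equality uses $f(x_{t})=f(x_{0})+t\scal{v_{0}}{x_{1}-x_{0}}$ from \ref{i:faffine}. The same argument, now based at $x_{1}$, gives $v_{1}\in\partial f(x_{t})$, since $f(x_{t})=f(x_{1})+(1-t)\scal{v_{1}}{x_{0}-x_{1}}$ by the equality case. Because $\partial f(x_{t})$ is convex, $[v_{0},v_{1}]\subset\partial f(x_{t})$. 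In particular every $x_{t}$ lies in $\dom\partial f$.

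For \ref{i:inside}, part \ref{i:faffine} lets us invoke Lemma~\ref{l:affine:segment}, which gives $\partial f(x)=\partial f(x_{0})\cap\partial f(x_{1})$ for $x\in\left]x_{0},x_{1}\right[$. The inclusion of $[v_{0},v_{1}]$ then follows from \ref{i:insubdomain}.

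There is no real obstacle here; the only delicate step is the bookkeeping identity that converts the subgradient inequality at $x_{0}$ (or $x_{1}$) into one at $x_{t}$, and that step relies on having both equalities from the first paragraph. If $x_{0}=x_{1}$, all statements are trivial.
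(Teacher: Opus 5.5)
Your proof is correct. Part~\ref{i:faffine} is the paper's argument almost verbatim.

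For part~\ref{i:insubdomain}, the paper takes a slightly different path. It first uses the Fenchel--Young equality $f(x_{1})+f^*(v_{0})=\scal{v_{0}}{x_{1}}$ to obtain $v_{0}\in\partial f(x_{1})$ and $v_{1}\in\partial f(x_{0})$. It then uses convexity of $\argmin\,(f-\scal{v_{0}}{\cdot})$, which contains $x_0$ and $x_1$. Your identity $f(x_{t})+\scal{v_{0}}{y-x_{t}}=f(x_{0})+\scal{v_{0}}{y-x_{0}}$ says the same thing, namely that every $x_t$ minimizes the tilted function. You verify it pointwise from part~\ref{i:faffine}, so this step is a little more elementary and needs no conjugate.

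The genuine difference is in part~\ref{i:inside}. The paper invokes paramonotonicity of $\partial f$ together with \cite[Proposition~22.10(ii)]{Bauschke-Combettes-2017}. You instead observe that part~\ref{i:faffine} supplies exactly the hypothesis of Lemma~\ref{l:affine:segment} and apply that lemma.

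What each route buys:
\begin{itemize}
\item Your route keeps the argument self-contained within the paper.
\item The paper's route is purely operator-theoretic: it uses only that $\partial f(x_i)$ and $(\partial f)^{-1}(v_0)$ are convex, plus paramonotonicity. It therefore transfers verbatim to Lemma~\ref{l:paramono:1} for paramonotone maximally monotone operators. There, no function is available to be affine and no subgradient inequality can be rebased, so your approach would not extend.
\end{itemize}
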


\begin{proof}
\ref{i:faffine}:
By the convexity of $f$, we have
\begin{align}
f(x_{1}) &\geq f(x_{0})+\scal{v_{0}}{x_{1}-x_{0}}\label{e:one}\\
f(x_{0}) &\geq f(x_{1})+\scal{v_{1}}{x_{0}-x_{1}}.
\end{align}
Then
\begin{align}
f(x_{0}) &\geq f(x_{1})+\scal{v_{1}}{x_{0}-x_{1}}=f(x_{1})+\scal{v_{1}-v_{0}}{x_{0}-x_{1}}+\scal{v_{0}}{x_{0}-x_{1}}\\
& =f(x_{1})+\scal{v_{0}}{x_{0}-x_{1}}. \label{e:two}
\end{align}
Combining \eqref{e:one} and \eqref{e:two} gives
\begin{equation}\label{e:affine:agree}
f(x_{1})=f(x_{0})+\scal{v_{0}}{x_{1}-x_{0}}
\end{equation}
Now for $t\in [0,1]$,
\begin{align*}
f((1-t)x_{0}+tx_{1}) & \geq f(x_{0})+\scal{v_{0}}{t(x_{1}-x_{0})}\\
&=f(x_{0})+t(f(x_{1})-f(x_{0}))=(1-t)f(x_{0})+tf(x_{1}).
\end{align*}
Since the converse always holds, we obtain
$$f((1-t)x_{0}+tx_{1})=(1-t)f(x_{0})+tf(x_{1}),$$
i.e., $f$ is affine on $[x_{0},x_{1}]$.
%This contradicts that $f$ is strictly convex.

\ref{i:insubdomain}: This follows from the paramonotonicity
of $\partial f$, \cite[Example 22.4 and Proposition 22.10(ii)]{Bauschke-Combettes-2017}.
Here we give a direct proof.
By \eqref{e:affine:agree},
$$f(x_{1})-\scal{v_{0}}{x_{1}}=f(x_{0})-\scal{v_{0}}{x_{0}}$$
Since $v_{0}\in\partial f(x_{0})$, we have $f^*(v_{0})=\scal{v_0}{x_0}-f(x_{0})$, so that
$f(x_{1})-\scal{v_{0}}{x_{1}}=-f^*(v_{0})$, i.e.,
$f(x_{1})+f^*(v_{0})=\scal{v_{0}}{x_{1}}$. Thus, $v_{0}\in\partial f(x_{1})$. Similarly,
$v_{1}\in\partial f(x_{0})$.
Define
$g(x):=f(x)-\scal{v_{0}}{x}$. Then $0\in \partial g(x_{0})\cap \partial g(x_{1})$, i.e.,
$x_{0}, x_{1}\in\argmin g$. Since $\argmin g$ is a convex set, we deduce that $[x_{0},x_{1}]\in\argmin g$,
implying $v_{0}\in\partial f(x)$ for every $x\in [x_{0},x_{1}]$. Similarly,
$v_{1}\in\partial f(x)$ for every $x\in [x_{0},x_{1}]$. Because $\partial f$ is maximally monotone,
$\partial f(x)$ is closed convex, so
$[v_{0},v_{1}]\subset\partial f(x)$.

\ref{i:inside}: By \cite[Example 22.4]{Bauschke-Combettes-2017}, $\partial f$ is paramonotone.
Since $\partial f(x_{0})\cap\partial f(x_{1})\supset\{v_{0}, v_{1}\}$ and
$[x_{0},x_{1}]\subset\dom\partial f$ by
\ref{i:insubdomain} and $x\in \left]x_{0},x_{1}\right[$,
using \cite[Proposition~22.10(ii)]{Bauschke-Combettes-2017}
we obtain $\partial f(x)=\partial f(x_{0})\cap\partial f(x_{1})$. The remaining result follows from
\ref{i:insubdomain}.
\end{proof}

\begin{corollary}
Let $f\in\CVF$ and let $v_{i}\in\partial f(x_{i})$ for $i=0,1$.
If $\exists x\in \left]x_{0},x_{1}\right[$ such that $\partial f(x)=\varnothing$, then
$\scal{v_{0}-v_{1}}{x_{0}-x_{1}}>0$.
\end{corollary}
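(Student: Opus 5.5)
This follows by contraposition from Lemma~\ref{l:not:strict}\ref{i:insubdomain}. Let $v_{i}\in\partial f(x_{i})$ for $i=0,1$, and suppose some $x\in\left]x_{0},x_{1}\right[$ satisfies $\partial f(x)=\varnothing$. Since $x\in\left]x_{0},x_{1}\right[$ forces $x_{0}\neq x_{1}$, the points are distinct. Because $\partial f$ is monotone (it is maximally monotone, as recalled in the introduction), we already know $\scal{v_{0}-v_{1}}{x_{0}-x_{1}}\geq 0$. So it suffices to rule out equality.

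Assume, for contradiction, that $\scal{v_{0}-v_{1}}{x_{0}-x_{1}}=0$. Then the hypotheses of Lemma~\ref{l:not:strict} hold for the pairs $(x_{0},v_{0}),(x_{1},v_{1})\in\gra\partial f$. Part~\ref{i:insubdomain} of that lemma gives $[v_{0},v_{1}]\subset\partial f(y)$ for every $y\in[x_{0},x_{1}]$; in particular $[x_{0},x_{1}]\subset\dom\partial f$. Since $x\in\left]x_{0},x_{1}\right[\subset[x_{0},x_{1}]$, this yields $v_{0}\in\partial f(x)$, contradicting $\partial f(x)=\varnothing$. Hence the inner product is strictly positive.

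No step here is a real obstacle; all the substance sits in Lemma~\ref{l:not:strict}\ref{i:insubdomain}. That part rests on the paramonotonicity argument: $v_{0}$ minimizes $f-\scal{v_{0}}{\cdot}$ at both endpoints, hence along the whole segment, because the set of minimizers is convex. The only care needed in the write-up is to note explicitly that the corollary's hypothesis forces $x_{0}\neq x_{1}$.
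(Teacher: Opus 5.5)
Your proof is correct and is the intended argument. The paper states this corollary without proof right after Lemma~\ref{l:not:strict}: it is the contrapositive of part~\ref{i:insubdomain}, where equality in the monotonicity inequality forces $[x_{0},x_{1}]\subset\dom\partial f$, combined with monotonicity of $\partial f$ to get $\geq 0$, exactly as you do.
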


\begin{lemma}\label{l:almoststrict} Suppose that $f\in\CVF$ is almost strictly convex. Then
$\partial f$ is strictly monotone on convex subsets of $\dom \partial f$.
\end{lemma}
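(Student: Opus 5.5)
We argue by contradiction, using Lemma~\ref{l:not:strict}. Let $C\subset\dom\partial f$ be convex, and suppose $\partial f$ is not strictly monotone on $C$. Then there exist $x_{0},x_{1}\in C$ with $x_{0}\neq x_{1}$ and $v_{i}\in\partial f(x_{i})$, $i=0,1$, such that $\scal{v_{0}-v_{1}}{x_{0}-x_{1}}\leq 0$. Since $\partial f$ is monotone, the inner product is also $\geq 0$, so it equals $0$.

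Lemma~\ref{l:not:strict}\ref{i:faffine} now gives that $f$ is affine on $[x_{0},x_{1}]$. The segment $[x_{0},x_{1}]$ lies in $\dom\partial f$. This holds because $C$ is convex and contains $x_{0},x_{1}$; it also follows, independently of $C$, from Lemma~\ref{l:not:strict}\ref{i:insubdomain}.

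So $f$ is not strictly convex along the segment $[x_{0},x_{1}]\subset\dom\partial f$ with $x_{0}\neq x_{1}$. Concretely, for $t\in\left]0,1\right[$ we have $f((1-t)x_{0}+tx_{1})=(1-t)f(x_{0})+tf(x_{1})$, and all values are finite because the points lie in $\dom\partial f\subset\dom f$. This contradicts the almost strict convexity of $f$. Hence $\partial f$ is strictly monotone on every convex subset of $\dom\partial f$.

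There is no real obstacle here. The only care needed is the reduction from ``not strictly monotone'' to ``equality $=0$'', which relies on monotonicity of $\partial f$, and checking that the offending segment lies in $\dom\partial f$ so that the definition of almost strict convexity applies. Lemma~\ref{l:not:strict}\ref{i:insubdomain} shows that convexity of $C$ is not even needed. This observation already points toward the stronger implication \ref{i:falmost}$\Rightarrow$\ref{i:full} of Theorem~\ref{t:almost}.
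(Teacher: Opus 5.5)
Your proof is correct and matches the paper's argument: you reduce to $\scal{v_{0}-v_{1}}{x_{0}-x_{1}}=0$ on a segment contained in $\dom\partial f$, use Lemma~\ref{l:not:strict}\ref{i:faffine} to get that $f$ is affine there, and contradict almost strict convexity. You also spell out two points the paper leaves implicit, and both are valid: monotonicity turns ``$\leq 0$'' into ``$=0$'', and Lemma~\ref{l:not:strict}\ref{i:insubdomain} makes convexity of $C$ unnecessary. The second point is exactly how the paper later proves \ref{i:almost}$\Rightarrow$\ref{i:full} in Theorem~\ref{t:almost}.
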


\begin{proof} Suppose that $\partial f$ is not strictly monotone on convex subsets of $\dom \partial f$.
Then $\exists [x_{0},x_{1}]\subset\dom\partial f$ with $x_{0}\neq x_{1}$, $v_{i}\in\partial f(x_{i})$
such that $\scal{v_{0}-v_{1}}{x_{0}-x_{1}}=0$. By Lemma~\ref{l:not:strict}\ref{i:faffine},
$f$ is affine
on $[x_{0},x_{1}]$, which contradicts that $f$ is strictly convex on $[x_{0},x_{1}]$.
\end{proof}

\begin{lemma}\label{l:strictmono} Suppose that $f\in\CVF$ and that
$\partial f$ is strictly monotone on convex subsets of $\dom \partial f$. Then $f$ is almost strictly convex.
\end{lemma}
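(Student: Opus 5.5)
We argue by contradiction. Suppose that $\partial f$ is strictly monotone on convex subsets of $\dom\partial f$, but $f$ is not almost strictly convex. Then there is a segment $[x_{0},x_{1}]\subset\dom\partial f$ with $x_{0}\neq x_{1}$ on which $f$ fails to be strictly convex. For a convex function of one variable, failure of strict convexity on an interval means affinity on some nondegenerate subinterval. So after shrinking the segment (and renaming endpoints) we may assume that $f$ is affine on $[x_{0},x_{1}]$, and the new segment still lies in $\dom\partial f$. To justify this reduction, I would look at $\varphi(t):=f((1-t)x_{0}+tx_{1})$, which is finite and convex on $[0,1]$. If $\varphi(\lambda)=(1-\lambda)\varphi(0)+\lambda\varphi(1)$ for some $\lambda\in\left]0,1\right[$, then convexity forces $\varphi$ to be affine on all of $[0,1]$, since $\varphi$ lies below the chord and touches it at an interior point.

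Next I would pick subgradients $v_{0}\in\partial f(x_{0})$ and $v_{1}\in\partial f(x_{1})$, which exist because the endpoints lie in $\dom\partial f$. The subgradient inequalities give
$$f(x_{1})-f(x_{0})\geq\scal{v_{0}}{x_{1}-x_{0}} \quad\text{and}\quad f(x_{0})-f(x_{1})\geq\scal{v_{1}}{x_{0}-x_{1}}.$$
These alone only recover monotonicity, so affinity has to be used more sharply.

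The sharper tool is Lemma~\ref{l:affine:segment}. Take the midpoint $\bar x:=\tfrac12(x_{0}+x_{1})$. It lies in $\dom\partial f$ because the whole segment does, so choose $v\in\partial f(\bar x)$. By Lemma~\ref{l:affine:segment}, since $f$ is affine on $[x_{0},x_{1}]$, we have $\partial f(\bar x)=\partial f(x_{0})\cap\partial f(x_{1})$, hence $v\in\partial f(x_{0})\cap\partial f(x_{1})$. Then $(x_{0},v)$ and $(x_{1},v)$ both lie in $\gra\partial f$ with $x_{0}\neq x_{1}$, and
$$\scal{v-v}{x_{0}-x_{1}}=0.$$
The convex set $[x_{0},x_{1}]\subset\dom\partial f$ therefore carries a violation of strict monotonicity, which is the desired contradiction.

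The main obstacle is the reduction in the first paragraph: making sure the affine piece is a genuine nondegenerate segment still inside $\dom\partial f$. This is handled by the one-variable chord argument above; the rest follows directly from Lemma~\ref{l:affine:segment}.
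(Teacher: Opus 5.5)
Your proof is correct and follows the paper's argument. You reduce to a nondegenerate segment $[x_{0},x_{1}]\subset\dom\partial f$ on which $f$ is affine, take a subgradient at an interior point, and use Lemma~\ref{l:affine:segment} to place it in $\partial f(x_{0})\cap\partial f(x_{1})$, contradicting strict monotonicity on that segment. The only difference is that you explicitly justify the reduction to an affine subsegment via the one-variable chord argument, which the paper takes for granted; your second paragraph with the two subgradient inequalities is superfluous.
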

\begin{proof}
If $f$ is not almost strictly convex, then
$\exists [x_{0},x_{1}]\subset\dom\partial f$ such that $x_0\neq x_{1}$ and $f$ is affine
on $[x_{0},x_{1}]$. Because $[x_{0},x_{1}]\subset\dom\partial f$,
we have $\partial f(\barx)\neq \varnothing$ for every
$\barx\in \left]x_{0},x_{1}\right[$.
Take $v\in \partial f(\barx)$, and
apply Lemma~\ref{l:affine:segment} to obtain $v\in\partial f(x_{0})\cap\partial f(x_{1})$.
This contradicts that $\partial f$ is strictly monotone on $[x_{0},x_{1}]\subset\dom\partial f$.
\end{proof}

We are now ready for the

\noindent{\sl Proof of Theorem~\ref{t:almost}.}

``\ref{i:falmost}$\Leftrightarrow$\ref{i:almost}'':
Combine Lemmas~\ref{l:almoststrict} and \ref{l:strictmono}.

``\ref{i:almost}$\Rightarrow$\ref{i:full}'': Apply Lemma~\ref{l:not:strict}\ref{i:insubdomain}.

``\ref{i:full}$\Rightarrow$\ref{i:almost}'': Clear.
\qed

\begin{corollary}
Let $f\in\CVF$. Then the following are equivalent:
\begin{enumerate}
\item\label{i:friday1} $f$ is almost strictly convex.
\item\label{i:friday2}
 For all $x_{0},x_{1}\in \dom\partial f$, $x_{0}\neq x_{1}$, $v\in\partial f(x_{0})$,
we have
$f(x_{1})>f(x_{0})+\scal{v}{x_{1}-x_{0}}.$
\end{enumerate}
\end{corollary}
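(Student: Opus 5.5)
The plan is to go through Theorem~\ref{t:almost}: we show that condition \ref{i:friday2} is equivalent to strict monotonicity of $\partial f$. Then \ref{i:friday1}$\Leftrightarrow$\ref{i:friday2} follows from \ref{i:falmost}$\Leftrightarrow$\ref{i:full} of Theorem~\ref{t:almost}.

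For ``\ref{i:friday2}$\Rightarrow$\ref{i:friday1}'', let $x_{0}\neq x_{1}$ in $\dom\partial f$ with $v_{i}\in\partial f(x_{i})$. Apply \ref{i:friday2} twice, once with $(x_{0},v_{0})$ and once with $(x_{1},v_{1})$:
$$f(x_{1})>f(x_{0})+\scal{v_{0}}{x_{1}-x_{0}},\qquad f(x_{0})>f(x_{1})+\scal{v_{1}}{x_{0}-x_{1}}.$$
Adding these, exactly as in the proof of Theorem~\ref{l:strictly:mono}, gives $\scal{v_{0}-v_{1}}{x_{0}-x_{1}}>0$. So $\partial f$ is strictly monotone, and Theorem~\ref{t:almost} shows that $f$ is almost strictly convex.

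For ``\ref{i:friday1}$\Rightarrow$\ref{i:friday2}'', argue by contradiction. Suppose $x_{0}\neq x_{1}$ lie in $\dom\partial f$ and $v_{0}\in\partial f(x_{0})$ satisfies $f(x_{1})=f(x_{0})+\scal{v_{0}}{x_{1}-x_{0}}$; the reverse strict inequality is impossible by the subgradient inequality. Mimicking the proof of Lemma~\ref{l:not:strict}\ref{i:insubdomain}, rewrite this as $f(x_{1})+f^*(v_{0})=\scal{v_{0}}{x_{1}}$, which gives $v_{0}\in\partial f(x_{1})$. Taking $v_{1}:=v_{0}$, we then have $(x_{i},v_{0})\in\gra\partial f$ for $i=0,1$ with $\scal{v_{0}-v_{0}}{x_{0}-x_{1}}=0$. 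This contradicts strict monotonicity of $\partial f$, which holds by Theorem~\ref{t:almost} since $f$ is almost strictly convex.

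Alternatively, one can invoke Lemma~\ref{l:not:strict} to get $[x_{0},x_{1}]\subset\dom\partial f$ with $f$ affine on that segment, contradicting almost strict convexity directly.

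The only delicate point is this last direction. Unlike Theorem~\ref{t:inequality:s}, the points $x_{0}$ and $x_{1}$ need not span a segment inside $\dom\partial f$. The Fenchel--Young equality is what overcomes this: it transfers $v_{0}$ to $\partial f(x_{1})$, and Lemma~\ref{l:not:strict} then fills in the whole segment.
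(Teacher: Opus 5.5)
Your proof is correct and follows the paper's own argument. The reverse direction adds the two strict subgradient inequalities and invokes Theorem~\ref{t:almost}. The forward direction argues by contradiction and uses the Fenchel--Young equality to move $v_0$ into $\partial f(x_1)$. The one small difference is in how you close the forward direction: you finish directly by taking $v_1:=v_0$ and contradicting Theorem~\ref{t:almost}\ref{i:full}. The paper instead first uses convexity of $(\partial f)^{-1}(v_0)$ to get $[x_0,x_1]\subset\dom\partial f$, then appeals to Theorem~\ref{t:inequality:s} (or Theorem~\ref{t:almost}\ref{i:almost}). Your shortcut is valid and slightly cleaner.
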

\begin{proof}
``\ref{i:friday1}$\Rightarrow$\ref{i:friday2}'':
We prove by contradiction. Suppose
$\exists x_{0}, x_{1}\in\dom\partial f$, $x_{0}\neq x_{1}$, and $v_{0}\in\partial f(x_{0})$
such that
\begin{equation}\label{e:affine}
f(x_{1})=f(x_{0})+\scal{v_{0}}{x_{1}-x_{0}}.
\end{equation}
Then
$f(x_{1})=f(x_0)-\scal{v_{0}}{x_{0}}+\scal{v_{0}}{x_{1}}=-f^*(v_{0})+\scal{v_{0}}{x_{1}}$
gives $f(x_{1})+f^*(v_{0})=\scal{v_{0}}{x_{1}}$, so $v_{0}\in\partial f(x_{1})$. Since
$\partial f$ is maximally monotone, $(\partial f)^{-1}$ is maximally monotone. This implies that
$(\partial f)^{-1}(v)$ is a closed convex set for every $v\in\HH$.
From $x_{0}, x_{1}\in (\partial f)^{-1}(v_{0})$, we deduce that
$[x_{0}, x_{1}]\in (\partial f)^{-1}(v_{0})$, in particular,
\begin{equation}\label{e:domain:line}
[x_{0},x_{1}]\subset\dom \partial f.
\end{equation}
In view of \eqref{e:affine} and \eqref{e:domain:line}, $f$ is not almost convex by Theorem~\ref{t:inequality:s},
which is a contradiction. Or Apply Theorem~\ref{t:almost}\ref{i:almost}.

``\ref{i:friday2}$\Rightarrow$\ref{i:friday1}'': Let $v_{i}\in\partial f(x_{i})$ with $i=0,1$ with $x_0\neq x_{1}$.
The assumption gives
\begin{equation*}
f(x_{1})>f(x_{0})+\scal{v_{0}}{x_{1}-x_{0}}, \text{ and  } f(x_{0})>f(x_{1})+\scal{v_{1}}{x_{0}-x_{1}}.
\end{equation*}
Adding these two inequalities yields
$\scal{v_{1}-v_{0}}{x_{1}-x_{0}}>0$. Thus, $\partial f$ is strictly monotone. Hence $f$ is almost strictly convex by
Theorem~\ref{t:almost}.
\end{proof}

The following result shows that under the assumption $f$ being subdifferentiable on $\dom f$,
Theorem~\ref{l:strictly:mono} is necessary and sufficient.
\begin{corollary}\label{c:strict:cm} Suppose that $f\in\CVF$ and $\dom f=\dom\partial f$. Then
$f$ is strictly convex if and only if $\partial f$ is strictly monotone.
\end{corollary}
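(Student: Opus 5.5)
The forward implication is immediate from Theorem~\ref{l:strictly:mono}: a strictly convex $f\in\CVF$ always has a strictly monotone subdifferential, and the hypothesis $\dom f=\dom\partial f$ is not needed for it. The content lies in the converse. My plan is to route it through almost strict convexity and then use the domain hypothesis to upgrade to strict convexity.

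Assume $\partial f$ is strictly monotone. By Theorem~\ref{t:almost} (\ref{i:full}$\Rightarrow$\ref{i:falmost}), $f$ is almost strictly convex, so $f$ is strictly convex along every line segment contained in $\dom\partial f$. The hypothesis $\dom\partial f=\dom f$ makes $\dom\partial f$ convex, because $\dom f$ is convex for a convex function. Hence for any $x_0,x_1\in\dom f$ with $x_0\neq x_1$, the whole segment $[x_0,x_1]$ lies in $\dom f=\dom\partial f$. Almost strict convexity then gives
\[
f((1-\lambda)x_0+\lambda x_1)<(1-\lambda)f(x_0)+\lambda f(x_1)\quad\text{for all }\lambda\in\left]0,1\right[,
\]
which is exactly strict convexity of $f$.

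As an alternative that avoids Theorem~\ref{t:almost}, I would argue by contradiction. If $f$ is not strictly convex, there are $x_0\neq x_1$ in $\dom f$ with $f$ affine on $[x_0,x_1]$. Since $[x_0,x_1]\subset\dom\partial f$, we may pick $v\in\partial f(\barx)$ for some $\barx\in\left]x_0,x_1\right[$. Lemma~\ref{l:affine:segment} then gives $v\in\partial f(x_0)\cap\partial f(x_1)$, so $\scal{v-v}{x_0-x_1}=0$, contradicting strict monotonicity.

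I do not expect a real obstacle. The only point needing care is that both endpoints and the interior points of the segment must admit subgradients, and this is precisely what $\dom f=\dom\partial f$ together with convexity of $\dom f$ provides.
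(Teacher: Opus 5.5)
Your proposal is correct and matches the paper's proof: the paper likewise notes that $\dom\partial f=\dom f$ is convex and then invokes Theorem~\ref{t:almost}, so almost strict convexity becomes strict convexity on all of $\dom f$. Your contradiction-based alternative is the proof of Lemma~\ref{l:strictmono} specialized to this setting, and using Theorem~\ref{l:strictly:mono} for the forward direction is equally valid.
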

\begin{proof}
Because $\dom f=\dom\partial f$, we see that $\dom\partial f$ is convex. Apply Theorem~\ref{t:almost}.
\end{proof}

\begin{lemma}\label{l:domandsub} Let $f\in\CVF$. Suppose that one of the following holds:
\begin{enumerate}
\item\label{i:open} $\dom f$ is open.
\item\label{i:closed} $\dom\partial f$ is closed.
\end{enumerate}
Then $\dom f=\dom\partial f$.
\end{lemma}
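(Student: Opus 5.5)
For \ref{i:open} I will use the standard fact for $f\in\CVF$, already invoked in the proof of Corollary~\ref{c:nice}: $f$ is continuous on $\inte\dom f$ \cite[Corollary 8.39]{Bauschke-Combettes-2017}. Hence $\partial f(x)\neq\varnothing$ for every $x\in\inte\dom f$ \cite[Proposition 16.17(ii)]{Bauschke-Combettes-2017}. If $\dom f$ is open, then $\dom f=\inte\dom f\subset\dom\partial f$. The inclusion $\dom\partial f\subset\dom f$ always holds, because $v\in\partial f(x)$ forces $f(x)<+\infty$; this is immediate once $f$ is proper, since the inequality $f(y)\geq f(x)+\scal{v}{y-x}$ at some $y\in\dom f$ gives $f(x)\leq f(y)-\scal{v}{y-x}<+\infty$. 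So $\dom f=\dom\partial f$.

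For \ref{i:closed} I will invoke the Br{\o}ndsted--Rockafellar theorem, in the form that $\dom\partial f$ is dense in $\dom f$ for $f\in\CVF$ (see \cite[Corollary 16.39]{Bauschke-Combettes-2017}). Together with the trivial inclusion this gives
\[
\dom\partial f\subset\dom f\subset\closu{\dom\partial f}.
\]
If $\dom\partial f$ is closed, then $\closu{\dom\partial f}=\dom\partial f$, and equality follows.

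The only substantive ingredient is the density statement in \ref{i:closed}. If the reference needs checking, I would argue directly. Fix $x\in\dom f$ and $\varepsilon>0$. Since $f$ is lsc, convex and proper, it has a continuous affine minorant, so $\varepsilon$-subgradients exist at $x$. Ekeland's variational principle (or Br{\o}ndsted--Rockafellar itself) then yields a point $x_\varepsilon\in\dom\partial f$ with $\|x-x_\varepsilon\|\leq\sqrt{\varepsilon}$. Part \ref{i:open} is routine by comparison.
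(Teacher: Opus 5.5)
Your proposal is correct and matches the paper's proof. In part (i) you use continuity of $f$ on the open set $\dom f$ to get subdifferentiability there, together with the trivial inclusion $\dom\partial f\subset\dom f$; in part (ii) you use the Br{\o}ndsted--Rockafellar density $\dom f\subset\closu{\dom\partial f}$ combined with closedness. One small correction to your fallback sketch: $\varepsilon$-subgradients exist at every point of $\dom f$ because $f$ is the supremum of its continuous affine minorants, not merely because it has one.
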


\begin{proof}
\ref{i:open}: Because $\dom f$ is open, $f$ is a continuous convex function. We have $\dom f\subseteq\dom\partial f$,
see, e.g., \cite[Proposition 16.27]{Bauschke-Combettes-2017}.
Since $\dom\partial f\subseteq \dom f$ always holds, the result follows.

\ref{i:closed}: By the Brondsted-Rockafellar theorem, see, e.g., \cite[Theorem 16.58]{Bauschke-Combettes-2017},
this follows from
$$\dom \partial f\subseteq \dom f\subseteq \closu{\dom\partial f}=\dom\partial f.$$
\end{proof}

The following significantly extends Theorem~\ref{l:strictly:mono}.
\begin{theorem}\label{t:open:closed} Let $f\in\CVF$.  Suppose that one of the following holds:
\begin{enumerate}
\item\label{i:open} $\dom f$ is open.
\item $\dom\partial f$ is closed.
\end{enumerate}
Then
$f$ is strictly convex if and only if $\partial f$ is strictly monotone.
\end{theorem}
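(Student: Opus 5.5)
The plan is to reduce Theorem~\ref{t:open:closed} to Corollary~\ref{c:strict:cm}. That corollary already says that, for $f\in\CVF$ with $\dom f=\dom\partial f$, strict convexity of $f$ is equivalent to strict monotonicity of $\partial f$. So the only work is to check that either hypothesis forces $\dom f=\dom\partial f$, and this is exactly Lemma~\ref{l:domandsub}.

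Concretely, in case \ref{i:open} ($\dom f$ open), Lemma~\ref{l:domandsub}\ref{i:open} gives $\dom f=\dom\partial f$. The underlying reason is that a convex lsc function on an open domain in a Hilbert space is continuous there, hence subdifferentiable at every point. In the second case ($\dom\partial f$ closed), Lemma~\ref{l:domandsub}\ref{i:closed} gives the same equality. Its proof is the Br{\o}ndsted--Rockafellar sandwich
\[
\dom\partial f\subseteq\dom f\subseteq\closu{\dom\partial f}=\dom\partial f .
\]
In both cases we are then in the setting of Corollary~\ref{c:strict:cm}.

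For completeness I would recall how Corollary~\ref{c:strict:cm} itself goes, since this is where the two implications actually happen.
\begin{enumerate}
\item The direction ``strictly convex $\Rightarrow$ strictly monotone'' is Theorem~\ref{l:strictly:mono} and needs no hypothesis.
\item For the converse, $\partial f$ strictly monotone implies that $f$ is almost strictly convex, by Theorem~\ref{t:almost} (\ref{i:full}$\Rightarrow$\ref{i:falmost}).
\item Since $\dom\partial f=\dom f$ is convex, every segment $[x_0,x_1]$ with $x_0,x_1\in\dom f$ lies in $\dom\partial f$. Almost strict convexity is then exactly strict convexity on $\dom f$.
\end{enumerate}

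I do not expect a genuine obstacle, as the argument is a chaining of earlier results. The only point to double-check is the open case: continuity of $f$ on $\inte\dom f$ requires $f$ to be lsc, which holds since $f\in\CVF$, and Hilbert spaces are Banach, so the cited continuity result applies.
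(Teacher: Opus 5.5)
Your proposal is correct and takes the same approach as the paper. The paper's proof likewise uses Lemma~\ref{l:domandsub} to get $\dom f=\dom\partial f$ under either hypothesis, then applies Corollary~\ref{c:strict:cm} (equivalently Theorem~\ref{t:almost}), and your unpacking of that corollary via Theorem~\ref{l:strictly:mono} and the convexity of $\dom\partial f=\dom f$ matches its reasoning.
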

\begin{proof}
Apply Lemma~\ref{l:domandsub} and Theorem~\ref{t:almost} or Corollary~\ref{c:strict:cm}.
\end{proof}

We end this section with some calculus on almost strictly convex functions, which
potentially allow us to construct more almost strictly convex functions.

\begin{theorem} Let $f_{1}, f_{2}\in\CVF$. Suppose that one of them is almost strictly
convex, and
$\dom f_{1}\cap\inte\dom f_{2}\neq\varnothing$ (or $\inte\dom f_{1}\cap\dom f_{2}\neq\varnothing$).
Then $f_{1}+f_{2}$ is almost strictly
convex.
\end{theorem}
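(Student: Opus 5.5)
The natural route is through Theorem~\ref{t:almost}: it suffices to show that $\partial(f_{1}+f_{2})$ is strictly monotone. Without loss of generality, assume $f_{1}$ is almost strictly convex; the roles can be swapped since the qualification condition is symmetric in the parenthetical version. The constraint qualification $\dom f_{1}\cap\inte\dom f_{2}\neq\varnothing$ (or its mirror) is exactly the classical condition guaranteeing the sum rule
$$\partial(f_{1}+f_{2})=\partial f_{1}+\partial f_{2},$$
see \cite[Corollary 16.48]{Bauschke-Combettes-2017}. In particular $f_{1}+f_{2}\in\CVF$ (it is proper because the domains meet), and $\dom\partial(f_{1}+f_{2})=\dom\partial f_{1}\cap\dom\partial f_{2}$.

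Now take $(x_{i},w_{i})\in\gra\partial(f_{1}+f_{2})$ for $i=0,1$ with $x_{0}\neq x_{1}$. By the sum rule we can write $w_{i}=u_{i}+v_{i}$ with $u_{i}\in\partial f_{1}(x_{i})$ and $v_{i}\in\partial f_{2}(x_{i})$. Then
$$\scal{w_{0}-w_{1}}{x_{0}-x_{1}}=\scal{u_{0}-u_{1}}{x_{0}-x_{1}}+\scal{v_{0}-v_{1}}{x_{0}-x_{1}}.$$
The second term is nonnegative by the monotonicity of $\partial f_{2}$. The first term is strictly positive because $\partial f_{1}$ is strictly monotone, which follows from Theorem~\ref{t:almost}\ref{i:falmost}$\Rightarrow$\ref{i:full} applied to $f_{1}$. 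Hence $\partial(f_{1}+f_{2})$ is strictly monotone, and Theorem~\ref{t:almost}\ref{i:full}$\Rightarrow$\ref{i:falmost} shows that $f_{1}+f_{2}$ is almost strictly convex.

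The main point needing care is the sum rule. Without equality, only the inclusion $\partial f_{1}+\partial f_{2}\subset\partial(f_{1}+f_{2})$ holds in general, and then a subgradient of the sum need not decompose, so the strictness argument would fail. I will therefore cite the interior-point qualification form of the sum rule explicitly. If one prefers to avoid it, an alternative is to argue directly on a segment $[x_{0},x_{1}]\subset\dom\partial(f_{1}+f_{2})$ on which $f_{1}+f_{2}$ is affine. Both summands are convex, so each would be affine there. The difficulty is that such a segment need not lie in $\dom\partial f_{1}$, so this route also ultimately needs the decomposition of subgradients; the sum-rule approach is the cleaner choice.
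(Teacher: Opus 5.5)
Your proposal is correct and is essentially the paper's proof. Like the paper, you reduce via Theorem~\ref{t:almost} to strict monotonicity of $\partial(f_{1}+f_{2})$, use the sum rule $\partial(f_{1}+f_{2})=\partial f_{1}+\partial f_{2}$ under the interior qualification (Bauschke--Combettes, Corollary~16.48), and add the strict monotonicity of $\partial f_{1}$ to the monotonicity of $\partial f_{2}$; your explicit check that $f_{1}+f_{2}\in\Gamma_{0}(\HH)$, needed before invoking Theorem~\ref{t:almost}, fills in a detail the paper leaves implicit.
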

\begin{proof} Suppose, without loss of generality,
that $f_{1}$ is almost strictly convex. Then $\partial f_{1}$ is strictly monotone
by Theorem~\ref{t:almost}.
Under the assumption, we have $\partial(f_{1}+f_{2})=\partial f_{1}+\partial f_{2}$ by
\cite[Theorem 2.8.7]{zalinescu} or \cite[Corollary~16.48]{Bauschke-Combettes-2017}.
Since $\partial f_{2}$ is monotone, $\partial(f_{1}+f_{2})$ is strictly monotone.
so Theorem~\ref{t:almost} applies.
\end{proof}

\begin{theorem} Let $f\in\CVF$ be almost strictly convex, and let
$A:\HH\rightarrow\HH$ be a bounded linear operator, injective and $\ran A\cap\inte(\dom f)\neq \varnothing$.
Then $f\circ A$ is almost strictly convex.
\end{theorem}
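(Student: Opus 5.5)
The plan is to mirror the proof of the preceding sum rule and reduce everything to Theorem~\ref{t:almost}. It suffices to show that $\partial(f\circ A)$ is strictly monotone; Theorem~\ref{t:almost} then gives that $f\circ A$ is almost strictly convex. First, $f\circ A\in\CVF$: convexity and lower semicontinuity follow because $A$ is bounded linear and $f$ is lsc and convex. Properness follows from $\ran A\cap\dom f\neq\varnothing$.

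The key step is the chain rule. Because $\ran A\cap\inte(\dom f)\neq\varnothing$, $f$ is continuous at some point $Ax$. The standard subdifferential chain rule (e.g., \cite[Corollary~16.53]{Bauschke-Combettes-2017} or \cite[Theorem~2.8.3]{zalinescu}) then yields
\begin{equation*}
(\forall x\in\HH)\quad \partial(f\circ A)(x)=A^*\partial f(Ax).
\end{equation*}
I expect this to be the only nonroutine ingredient. The hypothesis is exactly the qualification condition that this theorem needs, so I would cite it rather than reprove it.

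Next, strict monotonicity. Take $(x_i,w_i)\in\gra\partial(f\circ A)$, $i=0,1$, with $x_0\neq x_1$. By the chain rule, $w_i=A^*v_i$ for some $v_i\in\partial f(Ax_i)$. Then
\begin{equation*}
\scal{w_0-w_1}{x_0-x_1}=\scal{v_0-v_1}{Ax_0-Ax_1}.
\end{equation*}
Injectivity of $A$ gives $Ax_0\neq Ax_1$. Since $f$ is almost strictly convex, $\partial f$ is strictly monotone by Theorem~\ref{t:almost} (\ref{i:falmost}$\Rightarrow$\ref{i:full}). Hence the right-hand side is strictly positive, so $\partial(f\circ A)$ is strictly monotone.

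Finally, applying Theorem~\ref{t:almost} (\ref{i:full}$\Rightarrow$\ref{i:falmost}) to $f\circ A$ concludes the proof.
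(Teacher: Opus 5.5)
Your proposal is correct and follows the paper's proof essentially verbatim: the chain rule $\partial(f\circ A)=A^*\circ\partial f\circ A$ under the qualification $\ran A\cap\inte(\dom f)\neq\varnothing$, then injectivity of $A$ to get $Ax_0\neq Ax_1$, then strict monotonicity of $\partial f$ from Theorem~\ref{t:almost}, and finally Theorem~\ref{t:almost} again to conclude. Your explicit check that $f\circ A\in\CVF$ is a small detail the paper leaves implicit.
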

\begin{proof} Put $g:=f\circ A$.
By \cite[Theorem 16.47]{Bauschke-Combettes-2017} or \cite[Theorem 2.8.3]{zalinescu},
we have $\partial g(x) =A^{*}(\partial f(Ax))$. Let $x_{0}\neq x_{1}$ and $v_{i}\in\partial g(x_{i})$
with $i=0,1$. We have that $v_{i}=A^*x^*_{i}$ with $x^*_{i}\in\partial f(Ax_{i})$, and that
 $Ax_{0}\neq Ax_{1}$ because $A$ is injective. It follows that
\begin{align*}
\scal{v_{0}-v_{1}}{x_{0}-x_{1}} & =\scal{A^*x_{0}^*-A^*x_{1}^*}{x_{0}-x_{1}}\\
&=\scal{x_{0}^*-x_{1}^*}{Ax_{0}-Ax_{1}}>0,
\end{align*}
because $\partial f$ is strictly monotone by Theorem~\ref{t:almost}.
Hence $\partial g$ is strictly monotone, and Theorem~\ref{t:almost} applies again.
\end{proof}

\section{Paramonotone operators}\label{s:paramonotone}
In this section, we extend results on subdifferentials of convex functions
in Section~\ref{s:almostfunction} to paramonotone
operators which are maximally monotone.

To investigate almost strictly monotone operators, we shall need the following powerful notion
coined by Censor, Iusem, and Zenios \cite{censor}.
\begin{definition} An operator $A:\HH\rightarrow 2^{\HH}$ is paramonotone if
it is monotone and
$$[(\forall (x_{i},v_{i})\in\gra A, i=0,1)\ \scal{x_{0}-x_{1}}{v_{0}-v_{1}}=0] \Rightarrow
(x_{0}, v_{1}) \in \gra A, (x_{1},v_{0})\in\gra A.$$
\end{definition}

See \cite{BBHM,Bauschke-Combettes-2017,yao14,burachik,censor,iusem} for further details and applications in
optimization about paramontone operators.

\begin{definition} A monotone operator $A:\HH\rightarrow 2^{\HH}$ is at most single-valued if
$Ax$ is a singleton
for every $x\in\dom A$, but $Ax=\varnothing$ otherwise.
$A$ is said to be one-to-one if both $A$ and $A^{-1}$ are at most single-valued.
\end{definition}

The following two results on paramontone operators are essential.

\begin{lemma}\label{l:paramono:1} Let $A:\HH\rightarrow 2^{\HH}$ be paramonotone and maximally monotone.
Suppose that $\exists x_{0},x_{1}\in\dom A$, $x_{0}\neq x_{1}$, and $\exists v_{i}\in Ax_{i}$ such that
$\scal{x_{0}-x_{1}}{v_{0}-v_{1}}=0$.
Then $$(\forall x\in \left]x_{0},x_{1}\right[)\
Ax=Ax_{0}\cap Ax_{1}\supset [v_0, v_{1}].$$
\end{lemma}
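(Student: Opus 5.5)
Since $\scal{x_0-x_1}{v_0-v_1}=0$, paramonotonicity gives at once $v_1\in Ax_0$ and $v_0\in Ax_1$. So both $v_0,v_1$ lie in $Ax_0\cap Ax_1$. Maximal monotonicity makes each $Ax$ closed and convex, so $[v_0,v_1]\subset Ax_0\cap Ax_1$. The proof then has two steps: the inclusion $Ax_0\cap Ax_1\subset Ax$, and the reverse inclusion $Ax\subset Ax_0\cap Ax_1$, for $x=(1-t)x_0+tx_1$ with $t\in\left]0,1\right[$.

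For $Ax_0\cap Ax_1\subset Ax$ I will use maximality. Let $v\in Ax_0\cap Ax_1$ and $(y,w)\in\gra A$. Monotonicity gives $\scal{y-x_i}{w-v}\geq 0$ for $i=0,1$. Taking the convex combination with weights $1-t$ and $t$ yields $\scal{y-x}{w-v}\geq 0$. Thus $(x,v)$ is monotonically related to all of $\gra A$, and maximal monotonicity gives $v\in Ax$. In particular $[v_0,v_1]\subset Ax$, so $x\in\dom A$.

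For $Ax\subset Ax_0\cap Ax_1$, take $u\in Ax$. Since $v_0\in Ax_0\cap Ax_1$, monotonicity gives
\begin{equation*}
\scal{x-x_0}{u-v_0}\geq 0 \quad\text{and}\quad \scal{x-x_1}{u-v_0}\geq 0 .
\end{equation*}
Now $x-x_0=t(x_1-x_0)$ and $x-x_1=-(1-t)(x_1-x_0)$ with $t,1-t>0$, so the first reads $\scal{x_1-x_0}{u-v_0}\geq 0$ and the second $\scal{x_1-x_0}{u-v_0}\leq 0$. Hence $\scal{x_1-x_0}{u-v_0}=0$, and therefore $\scal{x-x_0}{u-v_0}=0$. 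This is an equality case between $(x,u)$ and $(x_0,v_0)$ in $\gra A$. Paramonotonicity then gives $(x_0,u)\in\gra A$. The identical argument with $(x_1,v_0)$ in place of $(x_0,v_0)$ gives $u\in Ax_1$. So $u\in Ax_0\cap Ax_1$.

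The main point to get right is this last step. The trick is to compare $u$ against the common element $v_0$ from both endpoints, so that the two opposite-sign inequalities force orthogonality. Once that is in hand, paramonotonicity transfers $u$ to each endpoint, and the argument mirrors Lemma~\ref{l:not:strict}\ref{i:inside} with no convex function involved.
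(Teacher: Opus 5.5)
Your proof is correct and has the same skeleton as the paper's. Paramonotonicity puts $v_0,v_1$ in $Ax_0\cap Ax_1$, convexity of the values gives $[v_0,v_1]\subset Ax_0\cap Ax_1$, and the remaining work is the identity $Ax=Ax_0\cap Ax_1$ on the open segment.

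The difference lies in how that identity is obtained. The paper shows $[x_0,x_1]\subset A^{-1}v_0\subset\dom A$ from convexity of $A^{-1}v_0$. It then simply cites Proposition 22.10(ii) of Bauschke--Combettes for $Ax=Ax_0\cap Ax_1$. You instead prove both inclusions by hand:
\begin{itemize}
\item Your first step uses the same maximality mechanism that makes $A^{-1}v$ convex, but applies it to every $v\in Ax_0\cap Ax_1$.
\item Your second step tests $u\in Ax$ against the common element $v_0$ from both endpoints, which forces $\scal{x_1-x_0}{u-v_0}=0$, and then invokes paramonotonicity. This is exactly the content the paper outsources to the citation.
\end{itemize}

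The paper's route buys brevity. Yours is self-contained and shows where each hypothesis enters. In particular, the inclusion $Ax\subset Ax_0\cap Ax_1$ needs only paramonotonicity and the existence of one common element of $Ax_0\cap Ax_1$. Maximal monotonicity is used only for $Ax_0\cap Ax_1\subset Ax$ and for the convexity of $Ax_0$ and $Ax_1$.
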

\begin{proof} The paramonotonicity implies that $v_{0}, v_{1}\in Ax_{0}\cap Ax_{1}$. Because $A$ is maximally
monotone, each $Ax_{i}$ is a closed convex set, so $[v_{0},v_{1}]\subset Ax_{0}\cap Ax_{1}$. Now
$x_{0},x_{1}\in A^{-1}v_{0}$. Because $A^{-1}$ is maximally monotone, $A^{-1}v_{0}$ is
convex, so $[x_{0},x_{1}]\subset A^{-1}v_{0}$. Then $v_{0}\in Ax$ for $x\in [x_{0},x_{1}]$,
in particular, $[x_{0},x_{1}]\subset\dom A$.
By \cite[Proposition 22.10(ii)]{Bauschke-Combettes-2017}, applied to $x_{0},x_{1}$
with $Ax_{0}\cap Ax_{1}\neq\varnothing$ and
$x\in \left]x_{0},x_{1}\right[$,
we deduce that
$(\forall x\in \left]x_{0},x_{1}\right[)\ Ax=Ax_{0}\cap Ax_{1}$.
\end{proof}

\begin{lemma}\label{l:strict:disjoint}
Let $A:\HH\rightarrow 2^{\HH}$ be paramonotone. Then
$A$ is strictly monotone if and only if $A$ is disjointly injective, i.e.,
$Ax\cap Ay=\varnothing$ if $x\neq y$.
\end{lemma}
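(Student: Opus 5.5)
We prove the two implications separately; both use only the definitions of strict monotonicity and paramonotonicity.

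\emph{Strictly monotone $\Rightarrow$ disjointly injective.} We argue by contraposition. Suppose $x\neq y$ and there is some $v\in Ax\cap Ay$. Then $(x,v),(y,v)\in\gra A$ and
$$\scal{x-y}{v-v}=0.$$
This contradicts strict monotonicity. Hence $Ax\cap Ay=\varnothing$ whenever $x\neq y$. This direction needs only monotonicity, not paramonotonicity.

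\emph{Disjointly injective $\Rightarrow$ strictly monotone.} Let $(x_0,v_0),(x_1,v_1)\in\gra A$ with $x_0\neq x_1$. Since $A$ is monotone, $\scal{x_0-x_1}{v_0-v_1}\geq 0$. Suppose, for a contradiction, that equality holds. Paramonotonicity then gives $(x_0,v_1)\in\gra A$, so
$$v_1\in Ax_0\cap Ax_1,$$
which is nonempty although $x_0\neq x_1$. This contradicts disjoint injectivity, so the inequality is strict and $A$ is strictly monotone.

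There is no substantive obstacle. The only point to check is that paramonotonicity is applied to exactly the pair $(x_0,v_0),(x_1,v_1)$ with vanishing inner product, which yields the cross membership needed for the intersection to be nonempty. Maximal monotonicity is not needed.
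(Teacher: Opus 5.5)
Your proposal is correct and follows the paper's proof essentially verbatim. The forward direction uses $\scal{x-y}{v-v}=0$ for a common $v\in Ax\cap Ay$, and the converse uses paramonotonicity to produce the cross membership that makes $Ax_0\cap Ax_1$ nonempty. A minor quibble: the forward direction needs only the definition of strict monotonicity, not even monotonicity.
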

\begin{proof} ``$\Rightarrow$": Suppose that $A$ is strictly monotone.
Let $x\neq y$. We show $Ax\cap Ay=\varnothing$. If not, then $\exists u\in Ax\cap Ay$,
so $\scal{u-u}{x-y}=0$, which contradicts that $A$ is strictly monotone.

``$\Leftarrow$": Suppose that $A$ is disjointly injective.
We show that $A$ is strictly monotone. If not, then $\exists x\neq y$ with $u\in Ax, v\in Ay$
such that $\scal{u-v}{x-y}=0$. Since $A$ is paramnotone, we have
$u\in Ay$ and $v\in Ax$, so $\{u,v\}\subset Ax\cap Ay$, which contradicts that $A$ is disjointly injective.
\end{proof}

A maximally monotone operator enjoys the following important property.
\begin{fact}\label{f:firm}\emph{(\cite[Proposition 23.10]{Bauschke-Combettes-2017})}
Let $A:\HH\rightarrow 2^{\HH}$ be maximally monotone. Then
$J_{A}$ is firmly nonexpansive and $\dom J_{A}=\HH$, namely,
$$(\forall x,y\in\HH)\ \scal{J_{A}x-J_{A}y}{x-y}\geq \|J_{A}x-J_{A}y\|^2.$$
\end{fact}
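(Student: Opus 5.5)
Fact~\ref{f:firm} asserts that, for a maximally monotone $A$, the resolvent $J_A$ is defined on all of $\HH$ and is firmly nonexpansive. The plan has three steps: obtain $\dom J_A=\HH$ from Minty's theorem, show single-valuedness and firm nonexpansiveness directly from monotonicity, and note that the second half of the claim only uses that $A$ is monotone.

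\emph{Firm nonexpansiveness and single-valuedness.} Let $x,y\in\HH$ and take $p\in J_Ax$, $q\in J_Ay$. Then $x\in p+Ap$ and $y\in q+Aq$, so $x-p\in Ap$ and $y-q\in Aq$. Monotonicity of $A$ gives
$$\scal{(x-p)-(y-q)}{p-q}\geq 0,$$
which rearranges to
$$\scal{p-q}{x-y}\geq \|p-q\|^2.$$
Taking $y=x$ shows $p=q$, so $J_A$ is at most single-valued. The displayed inequality is then exactly the firm nonexpansiveness of $J_A$ on its domain.

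\emph{Full domain.} We have $\dom J_A=\ran(\Id+A)$, so $\dom J_A=\HH$ is precisely the surjectivity of $\Id+A$. This is Minty's theorem, and it is the main obstacle: it is the only place where maximality, rather than mere monotonicity, is used. I would cite it from \cite{Bauschke-Combettes-2017}.

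If a self-contained argument is wanted, I would fix $z\in\HH$ and reduce to finding $x$ with $z\in x+Ax$. Replacing $A$ by $A-z$ (still maximally monotone), this becomes $0\in\ran(\Id+A)$. One option is the Fitzpatrick function $F_A$ of $A$. Consider
$$G(x,v):=F_A(x,v)+\tfrac12\|x\|^2+\tfrac12\|v\|^2 .$$
Since $F_A(x,v)\geq\scal{x}{v}$ everywhere, we get $G(x,v)\geq \tfrac12\|x+v\|^2\geq 0$. A duality argument (Fenchel--Rockafellar) then shows that the infimum $0$ is attained. At a minimizer $(x,v)$ one has $F_A(x,v)=\scal{x}{v}$ and $x+v=0$. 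Maximality then gives $(x,v)\in\gra A$, so $0=x+v\in x+Ax$.

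I expect this duality and attainment step to be the only delicate part. Everything else is the short monotonicity computation above.
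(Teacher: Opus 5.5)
Your argument is correct. It is essentially the standard proof behind \cite[Proposition 23.10]{Bauschke-Combettes-2017}, which the paper simply cites without proof: monotonicity gives single-valuedness and $\scal{J_Ax-J_Ay}{x-y}\geq\|J_Ax-J_Ay\|^2$, and Minty's theorem gives $\dom J_A=\ran(\Id+A)=\HH$.

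In your optional Fitzpatrick-function sketch, two points need filling in. First, $\inf G=0$ comes from Fenchel--Rockafellar duality combined with $F_A^*(u,x)\geq\scal{x}{u}$, which holds because $F_A$ lies below its transposed conjugate and above $\scal{\cdot}{\cdot}$. Second, attainment of the infimum is most easily seen from the strong convexity of $G$.
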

See \cite{Bauschke-Combettes-2017, reich1, BeckSIAM, reich} for rich theory and abundant applications of resolvents and their variants. The main result of this section comes as follows.
\begin{theorem}\label{t:para} Let $A:\HH\rightarrow 2^{\HH}$ be paramonotone and maximally monotone.
Then the following are equivalent:
\begin{enumerate}
\item\label{i:m:strict} $A$ is strictly monotone.
\item\label{i:alm:strict} $A$ is almost strictly monotone.
\item\label{i:inverse} $A^{-1}$ is at most single-valued.
\item\label{i:resolvent} $J_{A}$ is strictly nonexpansive, i.e.,
$$(\forall x,y\in\HH, x\neq y)\ \|J_{A}x-J_{A}y\|<\|x-y\|.$$
\item\label{i:minus:resolvent} $\Id-J_{A}$ is injective.
\item\label{i:strict:iresovent} $\Id-J_{A}$ is strictly monotone.
\end{enumerate}
\end{theorem}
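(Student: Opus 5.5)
We will prove the chain \ref{i:m:strict}$\Leftrightarrow$\ref{i:alm:strict}, then \ref{i:m:strict}$\Leftrightarrow$\ref{i:inverse}, and then connect \ref{i:inverse} with the resolvent conditions \ref{i:resolvent}, \ref{i:minus:resolvent} and \ref{i:strict:iresovent}. The key identity throughout is that $x-J_Ax\in A(J_Ax)$ for every $x\in\HH$, together with Fact~\ref{f:firm}.

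\emph{Step 1.} The implication \ref{i:m:strict}$\Rightarrow$\ref{i:alm:strict} is trivial. For \ref{i:alm:strict}$\Rightarrow$\ref{i:m:strict}, suppose there are $x_0\neq x_1$ and $v_i\in Ax_i$ with $\scal{x_0-x_1}{v_0-v_1}=0$. Lemma~\ref{l:paramono:1} then gives $[x_0,x_1]\subset\dom A$ and $v_0\in Ax$ for every $x\in[x_0,x_1]$. Taking $(x_0,v_0)$ and $(x_1,v_0)$ on this segment yields inner product $0$, which contradicts strict monotonicity on the segment. This mirrors the proof of Theorem~\ref{t:almost}.

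\emph{Step 2.} For \ref{i:m:strict}$\Leftrightarrow$\ref{i:inverse}, use Lemma~\ref{l:strict:disjoint}: strict monotonicity is equivalent to $Ax\cap Ay=\varnothing$ whenever $x\neq y$. This in turn says exactly that each $v$ lies in $Ax$ for at most one $x$, i.e., $A^{-1}$ is at most single-valued.

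\emph{Step 3: resolvent conditions.} For \ref{i:m:strict}$\Rightarrow$\ref{i:resolvent}, set $p=J_Ax$ and $q=J_Ay$, so that $x-p\in Ap$ and $y-q\in Aq$. Then
\[
\|x-y\|^2=\|p-q\|^2+2\scal{p-q}{(x-p)-(y-q)}+\|(x-p)-(y-q)\|^2 .
\]
If $p\neq q$, strict monotonicity makes the middle term positive. If $p=q$, then $x\neq y$ forces the last term to be positive. In both cases $\|p-q\|<\|x-y\|$.

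For \ref{i:resolvent}$\Rightarrow$\ref{i:minus:resolvent}: if $(\Id-J_A)x=(\Id-J_A)y$ with $x\neq y$, then $x-y=J_Ax-J_Ay$, which contradicts strict nonexpansiveness.

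For \ref{i:minus:resolvent}$\Rightarrow$\ref{i:inverse}: suppose $p\neq q$ and $v\in Ap\cap Aq$. Put $x=p+v$ and $y=q+v$. Then $J_Ax=p$ and $J_Ay=q$, so $(\Id-J_A)x=v=(\Id-J_A)y$ while $x\neq y$, which contradicts injectivity. Hence $A^{-1}$ is at most single-valued.

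For \ref{i:strict:iresovent}: the implication \ref{i:strict:iresovent}$\Rightarrow$\ref{i:minus:resolvent} holds because a strictly monotone single-valued map is injective. For \ref{i:m:strict}$\Rightarrow$\ref{i:strict:iresovent}, note that $\Id-J_A=J_{A^{-1}}$ is firmly nonexpansive, so
\[
\scal{(\Id-J_A)x-(\Id-J_A)y}{x-y}\geq\|(\Id-J_A)x-(\Id-J_A)y\|^2 .
\]
If the left side is $0$, then $(\Id-J_A)x=(\Id-J_A)y$. By the injectivity already established in \ref{i:minus:resolvent}, this forces $x=y$.

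\emph{Anticipated difficulty.} The only delicate point is making sure paramonotonicity is actually used, which happens in Steps 1 and 2. The resolvent part needs only maximal monotonicity plus \ref{i:inverse}. I expect Step 1, the reduction from segments to the whole space via Lemma~\ref{l:paramono:1}, to be the main conceptual step, though it is short given that lemma.
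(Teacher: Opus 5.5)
Your proof is correct and complete. Steps 1 and 2 match the paper's arguments; the paper proves \ref{i:m:strict}$\Leftrightarrow$\ref{i:inverse} directly rather than by quoting Lemma~\ref{l:strict:disjoint}, but the content is the same.

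The resolvent part is where you genuinely depart from the paper. The paper obtains \ref{i:m:strict}$\Leftrightarrow$\ref{i:resolvent} by citing an external result (strict nonexpansiveness of $J_A$ is equivalent to disjoint injectivity of $A$), combined with Lemma~\ref{l:strict:disjoint}. It cites the same source for \ref{i:resolvent}$\Leftrightarrow$\ref{i:minus:resolvent}, and links \ref{i:strict:iresovent} to \ref{i:resolvent} via Cauchy--Schwarz and firm nonexpansiveness of $J_A$. You instead close the cycle \ref{i:m:strict}$\Rightarrow$\ref{i:resolvent}$\Rightarrow$\ref{i:minus:resolvent}$\Rightarrow$\ref{i:inverse} with elementary arguments: the expansion of $\|x-y\|^2$, and the explicit points $x=p+v$, $y=q+v$. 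You then attach \ref{i:strict:iresovent} to \ref{i:minus:resolvent} through firm nonexpansiveness of $\Id-J_A=J_{A^{-1}}$. Your version is self-contained and makes visible that the resolvent conditions are really statements about disjoint injectivity of $A$. The paper's version is shorter on the page but outsources the key steps.

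One small point concerns your closing remark that the resolvent part needs only maximal monotonicity plus \ref{i:inverse}. The remark is true, but as written your proof of \ref{i:inverse}$\Rightarrow$\ref{i:resolvent} passes through \ref{i:m:strict}, and hence uses paramonotonicity. To make the argument match the remark, use the fact that in your identity both extra terms are nonnegative by monotonicity. Then $\|p-q\|=\|x-y\|$ forces $x-p=y-q\in Ap\cap Aq$, and $p\neq q$ because $x-y=p-q$ and $x\neq y$. This contradicts \ref{i:inverse} directly.
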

\begin{proof}
``\ref{i:m:strict}$\Rightarrow$\ref{i:alm:strict}'': Clear.

``\ref{i:alm:strict}$\Rightarrow$\ref{i:m:strict}'':
We prove by contradiction. Suppose that
$A$ is not strictly monotone. Then $\exists (x_{i},v_{i})\in\gra A, i=0,1$ such that $x_{0}\neq x_{1}$ and
$\scal{x_{0}-x_{1}}{v_{0}-v_{1}}=0$.
%Since $A$ is paramontone, we have
%$(x_{0},v_{1})\in\gra A, (x_{1},v_{0})\in\gra A$, thus,
%$v_{0}\in Ax_{0}\cap Ax_{1}, v_{1}\in Ax_{0}\cap Ax_{1}$. Since $x_{0}, x_{1}\in A^{-1}v_{0}$ and
%$A^{-1}$ is maximally monotone, $A^{-1}v_{0}$ is convex, so $[x_{0},x_{1}]\subset A^{-1}v_{0}$. Then
%$[x_{0},x_{1}]\subset\dom A$, and $v_{0}\in Ax$ for every $x\in [x_{0},x_{1}]$. In fact, by
%\cite[Proposition 22.10(ii)]{Bauschke-Combettes-2017},
It follows from Lemma~\ref{l:paramono:1} that
$Ax=Ax_0\cap Ax_{1}\supset\{v_{0},v_{1}\}$
for every $x\in \left]x_{0},x_{1}\right[$.
Then $A$ is not strictly monotone on $[x_{0},x_{1}]$.
Because $[x_{0},x_{1}]\subset\dom A$,
the assumption says that $A$ is strictly monotone on $[x_0,x_{1}]$. This is a contradiction.

``\ref{i:m:strict}$\Rightarrow$\ref{i:inverse}'': Suppose that $A$ is strictly monotone.
If $x_{0},x_{1}\in A^{-1}v$ and $x_{0}\neq x_{1}$, then
$v\in Ax_{0}\cap Ax_{1}$, implying $\scal{v-v}{x_{0}-x_{1}}=0$ with $x_{0}\neq x_{1}$. This contradicts
that $A$ is strictly monotone.

``\ref{i:inverse}$\Rightarrow$\ref{i:m:strict}'': Suppose that $A^{-1}$ is at most single-valued.
If $A$ is not strictly monotone, then $\exists v_{i}\in Ax_{i}$ for $i=0,1$ such that
$x_{0}\neq x_{1}$ and $\scal{v_{0}-v_{1}}{x_{0}-x_{1}}=0$. Since
$A$ is paramonotone, we have that $v_{0}\in Ax_{1}, v_{1}\in Ax_{0}$. Then
$\{x_{0}, x_{1}\}\subset A^{-1}v_{0}$, which contradict that $A^{-1}$ is at most single-valued.

``\ref{i:m:strict}$\Leftrightarrow$\ref{i:resolvent}'': This follows from
\cite[Theorem 2.1(ix)]{moffat12}, because of that under the assumption of paramonotonicity,
$A$ being strictly monotone is equivalent to $A$ being disjointly injective by Lemma~\ref{l:strict:disjoint}.
%To see this,
%suppose $A$ is strictly monotone. If $\exists x, y$ such that $x\neq y$ and $Ax\cap Ay\neq\varnothing$, we take $v\in Ax\cap Ay$ to get
%$\scal{v-v}{x-y}=0$, contradicting the strict monotonicity of $A$.
%Conversely, suppose $A$ is disjointly injective. If $\exists x, y$ such that
%$x\neq y$ and $u\in Ax, v\in Ay$ satisfying
%$\scal{u-v}{x-y}=0$, then $u\in Ax\cap Ay$ by the paramonotonicity of $A$, contradicting the disjoint injectivity
%of $A$.

``\ref{i:resolvent}$\Leftrightarrow$\ref{i:minus:resolvent}'': This is
\cite[Theorem 3.3(i)$\Leftrightarrow$(iii)]{moffat12}.

``\ref{i:resolvent}$\Leftrightarrow$\ref{i:strict:iresovent}'': Suppose that $J_{A}$ is strictly nonexpansive.
For $x\neq y$, we have
\begin{align*}
& \scal{x-y}{(x-J_{A}x)-(y-J_{A}y)}\\
&=\|x-y\|^2-\scal{x-y}{J_{A}x-J_{A}y}\\
&\geq \|x-y\|^2-\|x-y\|\|J_{A}x-J_{A}y\|\\
& >\|x-y\|^2-\|x-y\|^2=0,
\end{align*}
so $\Id-J_{A}$ is strictly monotone.
Conversely, suppose that $\Id-J_{A}$ is strictly monotone. That is,
$$(\forall x, y\in\HH, x\neq y)\ \scal{x-y}{(x-J_{A}x)-(y-J_{A}y)}>0.$$
Since $J_{A}$ is firmly nonexpansive by Fact~\ref{f:firm},
we have
\begin{align*}
\|x-y\|^2 &> \scal{x-y}{J_{A}x-J_{A}y}\\
&\geq \|J_{A}x-J_{A}y\|^2,
\end{align*}
so that $\|x-y\|>\|J_{A}x-J_{A}y\|$. Hence $J_{A}$ is strictly nonexpansive.
\end{proof}

\begin{corollary} Let $A:\HH\rightarrow 2^{\HH}$ be paramonotone and maximally monotone.
Then the following are equivalent:
\begin{enumerate}
\item\label{i:thurs1} $A$ is strictly monotone and at most single-valued.
\item\label{i:thurs2} $A^{-1}$ is strictly monotone and at most single-valued.
\item\label{i:thurs3} Both $J_{A}$ and $J_{A^{-1}}$ are is strictly nonexpansive.
\item\label{i:thurs4} $A$ is one-to-one.
\end{enumerate}
\end{corollary}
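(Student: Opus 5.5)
The plan is to apply Theorem~\ref{t:para} to both $A$ and $A^{-1}$, using that $A^{-1}$ is again paramonotone and maximally monotone. Maximal monotonicity of $A^{-1}$ is standard, since $\gra A^{-1}$ is just the reflection of $\gra A$. For paramonotonicity, take $(v_i,x_i)\in\gra A^{-1}$ with $\scal{v_0-v_1}{x_0-x_1}=0$. Paramonotonicity of $A$ gives $(x_0,v_1),(x_1,v_0)\in\gra A$, that is, $(v_1,x_0),(v_0,x_1)\in\gra A^{-1}$, which is exactly paramonotonicity of $A^{-1}$. Moreover $(A^{-1})^{-1}=A$. Hence Theorem~\ref{t:para}\ref{i:m:strict}$\Leftrightarrow$\ref{i:inverse} holds for both operators:
\begin{equation*}
A \text{ is strictly monotone} \iff A^{-1} \text{ is at most single-valued},
\end{equation*}
\begin{equation*}
A^{-1} \text{ is strictly monotone} \iff A \text{ is at most single-valued}.
\end{equation*}

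With these two equivalences, \ref{i:thurs1} says that $A^{-1}$ is at most single-valued and $A$ is at most single-valued, which is \ref{i:thurs4}. Likewise \ref{i:thurs2} says that $A$ is at most single-valued and $A^{-1}$ is at most single-valued, again \ref{i:thurs4}. So \ref{i:thurs1}$\Leftrightarrow$\ref{i:thurs4}$\Leftrightarrow$\ref{i:thurs2}.

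For \ref{i:thurs3}, Theorem~\ref{t:para}\ref{i:m:strict}$\Leftrightarrow$\ref{i:resolvent} applied to $A$ gives that $J_A$ is strictly nonexpansive iff $A$ is strictly monotone, i.e.\ iff $A^{-1}$ is at most single-valued. Applied to $A^{-1}$, it gives that $J_{A^{-1}}$ is strictly nonexpansive iff $A^{-1}$ is strictly monotone, i.e.\ iff $A$ is at most single-valued. Therefore \ref{i:thurs3}$\Leftrightarrow$\ref{i:thurs4}.

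The only step needing care is the verification that $A^{-1}$ inherits paramonotonicity and maximal monotonicity; everything after that is bookkeeping with Theorem~\ref{t:para}. As an alternative route to \ref{i:thurs3}, one could use the identity $J_{A^{-1}}=\Id-J_A$ together with Theorem~\ref{t:para}\ref{i:minus:resolvent}, but the symmetric argument above avoids this.
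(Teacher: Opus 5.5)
Your proposal is correct and follows the paper's proof. Like the paper, you apply Theorem~\ref{t:para} to both $A$ and $A^{-1}$, and you verify explicitly that $A^{-1}$ inherits paramonotonicity and maximal monotonicity, where the paper only observes it. The one difference is cosmetic: the paper obtains \ref{i:thurs1}$\Leftrightarrow$\ref{i:thurs4} from Lemma~\ref{l:strict:disjoint}, whereas you use Theorem~\ref{t:para}\ref{i:m:strict}$\Leftrightarrow$\ref{i:inverse}, which is the same fact because disjoint injectivity of $A$ means exactly that $A^{-1}$ is at most single-valued.
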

\begin{proof}
``\ref{i:thurs1}$\Leftrightarrow$\ref{i:thurs2}$\Leftrightarrow$\ref{i:thurs3}'':
 Observe that $A$ is paramonotone (maximally monotone) if and only if $A^{-1}$ is
paramonotone (maximally monotone). Apply Theorem~\ref{t:para} to both $A$ and $A^{-1}$.

``\ref{i:thurs1}$\Leftrightarrow$\ref{i:thurs4}'': Apply Lemma~\ref{l:strict:disjoint}.
\end{proof}

The following example shows that Theorem~\ref{t:para} fails if $A$ is not maximally monotone.

\begin{example} Define $$A:\RR\rightarrow 2^{\RR}:
x\mapsto \begin{cases}
-x^2, & \text{ if $x\leq 0$;}\\
\varnothing, & \text{ if $0<x<1$;}\\
(x-1)^2, & \text{ if $x\geq 1$.}
\end{cases}
$$
Then $A$ is paramonotone, but not maximally monotone.
Now $\dom A=\left]-\infty, 0\right]\cup \left[1, +\infty\right[$,
$A$ is strictly monotone on convex subsets
of $\dom A$, namely
$\left]-\infty, 0\right]$ and $\left[1, +\infty\right[$.
However, $A$ is not strictly monotone.
\end{example}

The next example shows that Theorem~\ref{t:para} fails if $A$ is not paramonotone.

\begin{example} Define the skew operator $$A:\RR^2\rightarrow\RR^2: (x_{1},x_{2})\mapsto
(-x_{2},x_{1}).
$$
Then $A$ is not paramonotone, but maximally monotone. We have
$A^{-1}$ is single-valued, but $A$ is not strictly monotone.
\end{example}

\section{Duality}\label{s:duality}
Duality has long been central in convex analysis;
see \cite{bb-combettes, Bauschke-Combettes-2017, Rock70, RW} for further details and references therein.
This section focuses on the characterizations of almost strictly convex functions by their Fenchel
conjugates. It turns out that seemingly different variants of almost differentiable (or essentially smooth)
 functions coincide if the domain of
its Fenchel conjugate has a nonempty interior.
In addition, based on duality, we show that a function is almost strictly convex if and only if its Moreau
envelope is strictly convex.
\begin{definition}\label{d:almost} A function $f\in \CVF$ is almost differentiable if
$\partial f(x)$ is either a singleton or an empty set for every $x\in\HH$. That is,
the mapping $\partial f$ is single-valued on its domain.
\end{definition}

In the light of Definition~\ref{d:almost}, the following observations help.
\begin{remark} \
\begin{enumerate}
\item Different from \cite[Definition 5.2(i)]{bb-combettes}, we do not require $\partial f$ to be locally bounded,
in particular, $\inte\dom f\neq\varnothing$. See the almost differentiable function $f^*$ given in
Example~\ref{e:empty:interior}.
%Different from \cite[Theorem 5.4]{bb-combettes}, we do not require $\partial f^*$ locally bounded,
%in particular, $\inte\dom f^*\neq \varnothing$.
\item In \cite[Theorem 11.13]{RW} Rockafellar and Wets originally define that $f\in\CVF$ is almost differentiable if $f$ is differentiable
on the open convex set $\inte(\dom f)$, $\inte(\dom f)\neq\varnothing$,
and $\partial f(x)=\varnothing$
for all points $x\in\RR^n\setminus(\inte\dom f)$, if any.
\end{enumerate}
\end{remark}
We will show that all these variants of almost differentiable functions in fact coincide if the function has a nonempty
interior domain!

Now, getting back to the course, almost strict convexity of a convex function dualizes to
almost differentiability of its Fenchel conjugate.

\begin{theorem}\label{t:almost:diff} Let $f\in \CVF$. Then the following are equivalent:
\begin{enumerate}
\item $f$ is almost strictly convex.
\item $f^*$ is almost differentiable.
\end{enumerate}
\end{theorem}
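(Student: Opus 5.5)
The plan is to route everything through Theorem~\ref{t:almost}, which already characterizes almost strict convexity of $f$ by strict monotonicity of $\partial f$. The remaining work is a duality statement about $\partial f^*=(\partial f)^{-1}$. We then use that $\partial f$ is paramonotone and maximally monotone \cite[Example 22.4]{Bauschke-Combettes-2017}, so Theorem~\ref{t:para} applies with $A:=\partial f$.

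\emph{Step 1.} By Theorem~\ref{t:almost}, $f$ is almost strictly convex if and only if $\partial f$ is strictly monotone.

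\emph{Step 2.} By Theorem~\ref{t:para}\ref{i:m:strict}$\Leftrightarrow$\ref{i:inverse} applied to $A=\partial f$, strict monotonicity of $\partial f$ is equivalent to $(\partial f)^{-1}$ being at most single-valued.

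\emph{Step 3.} Since $f\in\CVF$, we have $(\partial f)^{-1}=\partial f^*$ \cite[Corollary 16.30]{Bauschke-Combettes-2017}. Hence $(\partial f)^{-1}$ is at most single-valued exactly when $\partial f^*(v)$ is a singleton or empty for every $v$. By Definition~\ref{d:almost}, this says precisely that $f^*$ is almost differentiable.

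Chaining the three equivalences gives the theorem. The only point needing care is Step 2, which carries the real content: the direction ``$(\partial f)^{-1}$ single-valued $\Rightarrow$ $\partial f$ strictly monotone'' uses paramonotonicity. If $\scal{v_0-v_1}{x_0-x_1}=0$ with $x_0\neq x_1$, then $v_0\in\partial f(x_1)$. This can also be verified directly as in Lemma~\ref{l:not:strict}\ref{i:insubdomain}, giving $\{x_0,x_1\}\subset\partial f^*(v_0)$.

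For a self-contained argument I would also check directly the direction ``almost strictly convex $\Rightarrow$ $\partial f^*$ at most single-valued''. If $x_0\neq x_1$ lie in $\partial f^*(v)$, then by convexity of $\partial f^*(v)$ the whole segment $[x_0,x_1]$ lies in $\partial f^*(v)$, hence in $\dom\partial f$. On that segment $f-\scal{v}{\cdot}$ attains its minimum, so $f$ is affine there. This contradicts almost strict convexity, and is the same argument as in the unique-minimizer theorem of Section~\ref{s:strict:almosts}. I expect no genuine obstacle beyond correctly invoking paramonotonicity.
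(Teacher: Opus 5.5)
Your proposal is correct and matches the paper's proof: the paper likewise uses that $\partial f$ is paramonotone and maximally monotone, the identity $\partial f^*=(\partial f)^{-1}$, Theorem~\ref{t:para}\ref{i:m:strict}$\Leftrightarrow$\ref{i:inverse}, and Theorem~\ref{t:almost}, chained together exactly as you do. Your extra direct checks (the paramonotonicity step via Lemma~\ref{l:not:strict}\ref{i:insubdomain}, and the unique-minimizer argument for the forward direction) are sound but not needed for the proof.
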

\begin{proof}
Since that $\partial f$ is paramonotone \cite[Example 22.4(i)]{Bauschke-Combettes-2017} and maximally monotone by \cite[Theorem 20.25]{Bauschke-Combettes-2017}, and that
$\partial f^*=(\partial f)^{-1}$, we
apply Theorem~\ref{t:para}\ref{i:m:strict}$\Leftrightarrow$\ref{i:inverse} and
Theorem~\ref{t:almost}.
\end{proof}

\begin{corollary} Let $f\in\CVF$ and $\dom\partial f=\inte\dom f$. Then
the following are equivalent:
\begin{enumerate}
\item\label{i:strict:interior} $f$
is strictly convex on the open convex set $\inte\dom f$.
\item $\partial f$ is strictly monotone.
\item\label{i:dual} $f^*$ is almost differentiable, in which case, $\dom \partial f^*=\partial f(\inte\dom f)$.
\end{enumerate}
\end{corollary}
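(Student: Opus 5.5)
Under the hypothesis $\dom\partial f=\inte\dom f$, the plan is to show that all three conditions are just reformulations of almost strict convexity of $f$, and then to read off the extra identity in \ref{i:dual}.

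\textbf{Step 1.} The hypothesis forces $\inte\dom f\neq\varnothing$, because $f$ is proper and so $\dom\partial f\neq\varnothing$ by the Br{\o}ndsted--Rockafellar theorem. Then $\dom\partial f=\inte\dom f$ is an open convex set. The line segments contained in $\dom\partial f$ are therefore exactly the segments in $\inte\dom f$. Hence $f$ is almost strictly convex if and only if $f$ is strictly convex on $\inte\dom f$, which is \ref{i:strict:interior}.

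\textbf{Step 2.} By Theorem~\ref{t:almost}, almost strict convexity of $f$ is equivalent to strict monotonicity of $\partial f$. By Theorem~\ref{t:almost:diff}, it is also equivalent to almost differentiability of $f^*$. Combined with Step 1, this gives the equivalence of all three conditions. Alternatively, \ref{i:strict:interior}$\Leftrightarrow$(ii) follows directly from Corollary~\ref{c:nice} with $O=\inte\dom f$, since strict monotonicity of $\partial f$ on $O$ is the same as on $\dom\partial f$.

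\textbf{Step 3.} For the identity, I will use $\partial f^*=(\partial f)^{-1}$. This gives
\[
\dom\partial f^*=\ran\partial f=\partial f(\dom\partial f)=\partial f(\inte\dom f),
\]
where the last equality uses the hypothesis. This holds regardless of the equivalences.

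The only slightly delicate point is Step 1: checking that a segment lies in $\dom\partial f$ exactly when it lies in $\inte\dom f$, which uses convexity of $\inte\dom f$. Everything else is direct citation of earlier results.
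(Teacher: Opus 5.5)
Your proposal is correct and follows essentially the same route as the paper. Under the hypothesis $\dom\partial f=\inte\dom f$, item (i) is exactly almost strict convexity, Theorems~\ref{t:almost} and~\ref{t:almost:diff} then give the equivalences, and the identity comes from $\dom\partial f^*=\ran\partial f=\partial f(\dom\partial f)$. Your remark that $\inte\dom f\neq\varnothing$ and your alternative route through Corollary~\ref{c:nice} are harmless extras.
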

\begin{proof} Observe that $\inte\dom f$ is convex.
Under the assumption $\dom\partial f=\inte\dom f$,
\ref{i:strict:interior} means that $f$ is almost strictly convex.
Hence Theorems~\ref{t:almost} and \ref{t:almost:diff} apply.

Finally, the assumption $\dom\partial f=\inte\dom f$ yields
$$\dom \partial f^*=\ran\partial f=\partial f(\dom\partial f)=\partial f(\inte\dom f).$$
\end{proof}

Two significant consequences of Theorem~\ref{t:almost:diff} come as follows.
First, it extends \cite[Theorem~11.13]{RW} from $\RR^n$ to a general Hilbert space.
This relies on the following crucial observation in $\RR^n$.
\begin{lemma}\label{l:finite} Let $f$ be a proper, lsc, and convex function
on $\RR^n$. Then $\partial f$ being at most single-valued is the
same as that $f$ being differentiable on the open convex set $\inte\dom f$, $\inte\dom f\neq \varnothing$, and $\partial f(x)=\varnothing$
for $x\in\RR^n\setminus(\inte\dom f)$.
\end{lemma}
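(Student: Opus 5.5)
The plan is to prove the two implications separately. The direction ``$\Leftarrow$'' is essentially standard. Suppose $\inte\dom f\neq\varnothing$, $f$ is differentiable on $\inte\dom f$, and $\partial f(x)=\varnothing$ for $x\notin\inte\dom f$. Then $\dom\partial f=\inte\dom f$. For $x\in\inte\dom f$, differentiability of a convex function at $x$ forces $\partial f(x)=\{\nabla f(x)\}$ (e.g.\ \cite[Proposition 17.31]{Bauschke-Combettes-2017} or \cite[Theorem 25.1]{Rock70}). Hence $\partial f$ is at most single-valued.

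For ``$\Rightarrow$'', assume $\partial f$ is at most single-valued. I would carry out the following steps in order.

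\emph{Step 1: $\inte\dom f\neq\varnothing$.} Suppose instead that $\aff\dom f$ is a proper affine subspace $x_0+L$ with $L\neq\RR^n$. Take $x\in\reli\dom f$, where $\partial f(x)\neq\varnothing$ by \cite[Theorem 23.4]{Rock70}. If $v\in\partial f(x)$, then $v+w\in\partial f(x)$ for every $w\in L^\perp$, because $\scal{w}{y-x}=0$ for all $y\in\dom f$. Since $L^\perp\neq\{0\}$, $\partial f(x)$ is not a singleton, which is a contradiction. This is the one place where finite dimensionality is essential: nonemptiness of $\partial f$ on the relative interior fails in general Hilbert spaces, as Example~\ref{e:empty:interior} shows.

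\emph{Step 2: differentiability on $\inte\dom f$.} On the open convex set $\inte\dom f$, $f$ is continuous and $\partial f(x)$ is a nonempty singleton. A continuous convex function with singleton subdifferential at $x$ is G\^ateaux differentiable there \cite[Proposition 17.26]{Bauschke-Combettes-2017}. In $\RR^n$, G\^ateaux and Fr\'echet differentiability coincide for convex functions \cite[Theorem 25.2]{Rock70}.

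\emph{Step 3: $\partial f(x)=\varnothing$ for $x\in\dom f\setminus\inte\dom f$.} This is the step I expect to be the main obstacle. Let $x$ be such a boundary point. Since $\inte\dom f\neq\varnothing$, $\dom f$ has a nontrivial normal cone at $x$: by the supporting hyperplane theorem there is $w\neq0$ with $w\in N_{\dom f}(x)$. If $v\in\partial f(x)$, then for every $t\ge0$ and $y\in\dom f$ we get $f(y)\ge f(x)+\scal{v}{y-x}\ge f(x)+\scal{v+tw}{y-x}$, since $\scal{tw}{y-x}\le0$. Hence $v+tw\in\partial f(x)$ for all $t\ge0$. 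So $\partial f(x)$ contains a ray and is not a singleton, a contradiction. Therefore $\partial f(x)=\varnothing$, and for $x\notin\dom f$ this holds trivially.

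This completes the equivalence.
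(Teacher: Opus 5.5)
Your proof is correct and follows essentially the same argument as the paper. In both, a nonzero normal direction to $\dom f$ is added to a subgradient to contradict single-valuedness; this direction comes from $\aff\dom f\neq\RR^n$ when $\inte\dom f=\varnothing$, or from a supporting hyperplane at a point of $\dom f\setminus\inte\dom f$. Singleton subdifferentials on $\inte\dom f$ then give differentiability there. You also spell out the easy converse, and in Step~1 you pick $x\in\reli\dom f$ rather than an arbitrary $x\in\dom\partial f$.

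One small correction to your side remark: what fails in infinite dimensions (Example~\ref{e:empty:interior}) is not nonemptiness of subdifferentials. It is that $\inte\dom f^*=\varnothing$ no longer forces $\dom f^*$ into a closed hyperplane; there $\dom f^*$ is dense, so no nonzero normal vector exists.
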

\begin{proof}
Indeed,
if $\inte\dom f= \varnothing$, then $\dom f$ lies in a hyperplane, so that
$N_{\dom f}(x)\setminus\{0\}\neq\varnothing$ for every $x\in\dom f$. Take a point $x\in\dom\partial f$.
Since $N_{\dom f}(x)$ is a cone and
$\partial f(x)+N_{\dom f}(x)\subset\partial f(x)$, this contradicts that
$\partial f(x)$ is a singleton. Therefore, $\inte\dom f\neq\varnothing$.
For $x\in\inte\dom f$, $f$ is differentiable at $x$ by \cite[Theorem 17.18, Corollary 8.39]{Bauschke-Combettes-2017}
because $\partial f(x)$ is a singleton.
For $x\in\dom f\setminus(\inte\dom f)$ (if any), it is a boundary point, so $N_{\dom f}(x)\setminus\{0\}\neq\varnothing$.
If $\partial f(x) \neq\varnothing$, then $\partial f(x)+N_{\dom f}(x)\subset\partial f(x)$
implies that
$\partial f(x)$ is not a singleton, which is a contradiction. Therefore,
$\partial f(x)=\varnothing$ for every $x\in\dom f\setminus(\inte\dom f)$. Finally,
$\partial f(x)=\varnothing$ for $x\not\in\dom f$ by the definition.
\end{proof}

\begin{corollary}[Rockafellar-Wets]\label{c:rockwet}
Let $f\in \Gamma_{0}(\RR^n)$. Then the following are equivalent:
\begin{enumerate}
\item $f$ is almost strictly convex.
\item $f^*$ is almost differentiable, in the sense that $f^*$ is differentiable on the open convex
set $\inte\dom f^*$, which is nonempty, but $\partial f^*(x)=\varnothing$ for all points
$x\in\RR^n\setminus(\inte\dom f^*)$, if any.
\end{enumerate}
\end{corollary}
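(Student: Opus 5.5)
The plan is to obtain Corollary~\ref{c:rockwet} directly from Theorem~\ref{t:almost:diff} and Lemma~\ref{l:finite}, applied with $f^*$ in place of $f$. Since $f\in\Gamma_{0}(\RR^n)$, we also have $f^*\in\Gamma_{0}(\RR^n)$, so both results are available for $f^*$.

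First, Theorem~\ref{t:almost:diff}, which is valid in any Hilbert space and in particular in $\RR^n$, shows that $f$ is almost strictly convex if and only if $\partial f^*$ is at most single-valued. This is almost differentiability of $f^*$ in the sense of Definition~\ref{d:almost}.

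Second, Lemma~\ref{l:finite} applied to $f^*$ shows that $\partial f^*$ is at most single-valued if and only if the Rockafellar--Wets conditions hold for $f^*$:
\begin{enumerate}
\item $\inte\dom f^*\neq\varnothing$;
\item $f^*$ is differentiable on the open convex set $\inte\dom f^*$;
\item $\partial f^*(x)=\varnothing$ for every $x\notin\inte\dom f^*$.
\end{enumerate}
Chaining these two equivalences gives the corollary.

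Lemma~\ref{l:finite} is stated as a two-sided equivalence, but its written proof only handles the direction from single-valuedness to the Rockafellar--Wets conditions. The converse direction is the only gap, and it is routine. Suppose the three conditions hold for $f^*$. By the third condition, $\dom\partial f^*\subset\inte\dom f^*$. At any point of $\inte\dom f^*$, the function $f^*$ is differentiable by the second condition, so its subdifferential there is the singleton consisting of the gradient (e.g., \cite[Proposition 17.31]{Bauschke-Combettes-2017}). Hence $\partial f^*$ is at most single-valued.

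The only genuinely finite-dimensional step is the nonemptiness of $\inte\dom f^*$. It comes from the fact that a convex set in $\RR^n$ with empty interior lies in a hyperplane, so its normal cone is nontrivial at every point; this is what Lemma~\ref{l:finite} uses. Example~\ref{e:empty:interior} shows that this step fails in infinite dimensions. Beyond that, no real obstacle is expected, since the corollary is a direct combination of the two earlier results.
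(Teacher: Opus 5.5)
Your proposal is correct and matches the paper's argument: the paper's proof combines Theorem~\ref{t:almost:diff} with Lemma~\ref{l:finite}, with the lemma applied to $f^*$. You are right that the written proof of Lemma~\ref{l:finite} only treats the forward direction, and your converse correctly fills that gap: differentiability at points of $\inte\dom f^*$ gives $\partial f^*(x)=\{\nabla f^*(x)\}$ there, and $\partial f^*$ is empty elsewhere by hypothesis.
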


\begin{proof} Combine Theorem~\ref{t:almost:diff} and Lemma~\ref{l:finite}.
\end{proof}

Second, Theorem~\ref{t:almost:diff} not only provides new characterizations of almost strictly
convex functions via duality, but also recovers \cite[Corollary 18.11]{Bauschke-Combettes-2017}
with a different proof. To proceed, we need two important facts.

\begin{fact}\emph{(\cite[Propositions 6.45, 7.5]{Bauschke-Combettes-2017}, \cite[Theorem 1.1.3]{zalinescu})}
\label{f:nonempty:interior}
Let $C$ be a convex subset of $\HH$ such that $\inte C\neq\varnothing$ and $x\in C$. Then
$x\in\bdry C$ if and only if $N_{C}(x)\setminus\{0\}\neq \varnothing$.
\end{fact}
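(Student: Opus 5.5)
The plan is to prove the two implications separately. The reverse implication is a direct ball argument. The forward implication is a separation argument combined with the fact that a convex set with nonempty interior lies in the closure of its interior. Since $x\in C$ is assumed throughout, $x\in\bdry C$ simply means $x\in C\setminus\inte C$.

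``$\Leftarrow$'': We argue by contraposition. Suppose $x\in\inte C$, so that $\BB_{\delta}(x)\subset C$ for some $\delta>0$, and let $u\in N_{C}(x)$. If $u\neq 0$, take $y:=x+\tfrac{\delta}{2}u/\|u\|\in C$. Then the normal cone inequality $\scal{u}{y-x}\leq 0$ reads $\tfrac{\delta}{2}\|u\|\leq 0$, which is impossible. Hence $N_{C}(x)=\{0\}$. So if $N_{C}(x)\setminus\{0\}\neq\varnothing$, then $x\notin\inte C$, and therefore $x\in\bdry C$.

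``$\Rightarrow$'': Let $x\in C\setminus\inte C$. We proceed in three steps.
\begin{enumerate}
\item The set $\inte C$ is nonempty, convex and open, and $x\notin\inte C$. By the geometric Hahn--Banach theorem (separating a point from a nonempty open convex set), there is $u\neq 0$ with $\scal{u}{y}<\scal{u}{x}$ for all $y\in\inte C$; in particular $\scal{u}{y}\leq\scal{u}{x}$ on $\inte C$.
\item We extend this inequality from $\inte C$ to all of $C$. For this we use the line segment principle: if $z\in\inte C$ and $y\in C$, then $(1-t)z+ty\in\inte C$ for every $t\in\left[0,1\right[$. To see this, let $\BB_{\varepsilon}(z)\subset C$; by convexity, $(1-t)\BB_{\varepsilon}(z)+ty\subset C$, and this set is the ball of radius $(1-t)\varepsilon$ centred at $(1-t)z+ty$. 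Consequently $C\subset\closu{\inte C}$, and by continuity of $y\mapsto\scal{u}{y}$ we get $\scal{u}{y}\leq\scal{u}{x}$ for all $y\in C$.
\item The inequality in step 2 says exactly that $u\in N_{C}(x)\setminus\{0\}$.
\end{enumerate}

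The only step requiring care is step 2. A separation taken directly against $C$ would fail, because $x\in C$ and so $x$ cannot be separated from $C$ itself. The nonempty-interior hypothesis is what allows us to separate from the open set $\inte C$ and then pass to $C$ via $C\subset\closu{\inte C}$. Without this hypothesis the statement fails in infinite dimensions: a dense proper subspace has empty interior, so every one of its points is a boundary point, yet its normal cone at each point is $\{0\}$.
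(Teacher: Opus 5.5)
Your proof is correct: the ball argument settles ``$\Leftarrow$''. For ``$\Rightarrow$'', you separate $x$ from the nonempty open convex set $\inte C$ and then pass to all of $C$ via $C\subset\closu{\inte C}$ (the line segment principle), which works. The paper gives no argument of its own and only cites Bauschke--Combettes (Propositions 6.45 and 7.5) and Zalinescu (Theorem 1.1.3), which package exactly these two ingredients (trivial normal cone at interior points, and a supporting functional at points of $C\cap\bdry C$ when $\inte C\neq\varnothing$), so you have essentially reproduced the standard route behind those citations.
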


\begin{fact}\emph{(\cite[Theorem 3.3]{thibault2})}\label{f:thibault}
Let $f\in \CVF$ and $\inte\dom f\neq\varnothing$. Let $D$ be any dense subset of $\inte\dom f$.
Then for any $x\in\dom f$ one has
\begin{equation}\label{e:zagrodny}
\partial f(x)=\closu{\conv}^{w}\bigg(\limsup_{y\in D \rightarrow x}\partial f(y)\bigg)+N_{\dom f}(x)
=\closu{\conv}\bigg(\limsup_{y\in D \rightarrow x}\partial f(y)\bigg)+N_{\dom f}(x),
\end{equation}
where
$$\limsup_{y\in D, y\rightarrow x}\partial f(y)=\menge{y^*\in\HH}{y_{i}^*\rightharpoonup y^*, y_{i}^*\in\partial f(y_{i}), y_{i}\in D\rightarrow x},$$
and $N_{\dom f}$ denotes the normal cone to $\dom f$.
\end{fact}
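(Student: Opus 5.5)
The second equality in \eqref{e:zagrodny} is the easy part, so I would dispose of it first. By Mazur's theorem, a convex subset of $\HH$ has the same closure in the weak and in the strong topology. Hence $\closu{\conv}^{w}S=\closu{\conv}S$ for $S:=\limsup_{y\in D\rightarrow x}\partial f(y)$. The real content is the first equality, which I would prove by two inclusions.

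\emph{Inclusion ``$\supseteq$''.} I would proceed in three steps.
\begin{enumerate}
\item Show $S\subset\partial f(x)$. Let $y_i^*\in\partial f(y_i)$ with $y_i\in D$, $y_i\rightarrow x$ and $y_i^*\rightharpoonup y^*$. For every $z$ we have $f(z)\geq f(y_i)+\scal{y_i^*}{z-y_i}$.
\begin{itemize}
\item Weakly convergent sequences are bounded, and $y_i\rightarrow x$ strongly, so $\scal{y_i^*}{z-y_i}\rightarrow\scal{y^*}{z-x}$.
\item Lower semicontinuity gives $\liminf f(y_i)\geq f(x)$.
\end{itemize}
Together these yield $f(z)\geq f(x)+\scal{y^*}{z-x}$, i.e. $y^*\in\partial f(x)$.
\item Since $\partial f(x)$ is closed and convex, step 1 gives $\closu{\conv}S\subset\partial f(x)$.
\item Adding normals: if $v\in\partial f(x)$ and $n\in N_{\dom f}(x)$, then for $z\in\dom f$ we get $f(z)\geq f(x)+\scal{v}{z-x}\geq f(x)+\scal{v+n}{z-x}$. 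For $z\notin\dom f$ the inequality is trivial. Hence $\partial f(x)+N_{\dom f}(x)\subset\partial f(x)$.
\end{enumerate}

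\emph{Inclusion ``$\subseteq$''.} This is the main obstacle. Put $C:=\closu{\conv}S+N_{\dom f}(x)$ and argue by separation. If some $v\in\partial f(x)\setminus\closu{C}$ existed, there would be $u\in\HH$ with
\[\scal{v}{u}>\sup_{c\in C}\scal{c}{u}.\]
Since $N_{\dom f}(x)$ is a cone, finiteness of the right-hand side forces $\scal{n}{u}\leq 0$ for all normals $n$, i.e. $u$ lies in the tangent cone $T_{\dom f}(x)=\closu{\mathbb{R}_{+}(\dom f-x)}$. The aim is then to prove the key estimate
\[f'(x;u)\leq\sup_{s\in S}\scal{s}{u}\quad\text{for every } u\in T_{\dom f}(x),\]
which contradicts $\scal{v}{u}\leq f'(x;u)$ (valid because $v\in\partial f(x)$). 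Two points need care here. First, one must check that $S\neq\varnothing$ whenever $\partial f(x)\neq\varnothing$; otherwise the sum $C$ is empty and the estimate is meaningless. Second, one must pass from a separation against $\closu{C}$ to one against $C$ itself, i.e. show that the sum $C$ is closed.

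To prove the estimate I would first perturb $u$. Fix $x_0\in\inte\dom f$ and set $u_\varepsilon:=u+\varepsilon(x_0-x)$. Then $x+tu_\varepsilon\in\inte\dom f$ for small $t>0$, because $u_\varepsilon$ points strictly into the interior. Next, apply the mean value theorem \cite[Theorem 16.56]{Bauschke-Combettes-2017} on the segment from a point $w$ near $x$ in $\inte\dom f$ to $w+tu_\varepsilon$. This produces points $\xi$ in $\inte\dom f$ near $x$ and subgradients $\xi^*\in\partial f(\xi)$ with
\[\bigl(f(w+tu_\varepsilon)-f(w)\bigr)/t=\scal{\xi^*}{u_\varepsilon}.\]
I would then replace each $\xi$ by a point of the dense set $D$. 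Since $\partial f$ is locally bounded and norm-to-weak upper semicontinuous on $\inte\dom f$, nearby points of $D$ carry subgradients whose pairing with $u_\varepsilon$ is almost the same.

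The hardest step is the limit passage. Letting $w\rightarrow x$, $t\downarrow0$ and $\varepsilon\downarrow0$, I need the subgradients $\xi^*$ to stay bounded, so that weak cluster points exist and belong to $S$. Near the boundary of $\dom f$ this can fail. The plan there is to split into cases. If the $\xi^*$ remain bounded, then weak compactness and $\xi\rightarrow x$ place their cluster points in $S$, which gives the estimate. If they are unbounded, normalize them. Any weak cluster direction should then be a normal to $\dom f$ at $x$, and it should pair nonpositively with $u$; this is exactly the normal-cone term in $C$ absorbing the blow-up. I expect this case, together with checking that the directional derivative $f'(x;u)$ is controlled by the limits of these difference quotients, to require the most delicate estimates.
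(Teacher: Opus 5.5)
Your Mazur argument for the second equality is exactly the paper's justification, and your inclusion ``$\supseteq$'' is correct. The paper does not prove the first equality; it simply cites it from the literature. So everything rests on your sketch of ``$\subseteq$'', and that sketch has genuine gaps.

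\emph{The key estimate is false as stated.} Take the indicator function $f=\iota_{\BB_1(0)}$ of the closed unit disk in $\RR^2$, $x=(1,0)$ and $u=(0,1)\in T_{\dom f}(x)$. Then $f'(x;u)=+\infty$ while $S=\{0\}$. No passage $\varepsilon\downarrow0$ starting from $f'(x;u_\varepsilon)$ can reach this estimate, because $f'(x;\cdot)$ need not be lower semicontinuous at boundary directions of $T_{\dom f}(x)$.

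What the separation argument actually needs is weaker. Put $\sigma_S:=\sup_{s\in S}\scal{s}{\cdot}$. It suffices to show $\scal{v}{u_\varepsilon}\le\sigma_S(u_\varepsilon)$ and then use sublinearity:
$\sigma_S(u_\varepsilon)\le\sigma_S(u)+\varepsilon\sigma_S(x_0-x)\le\sigma_S(u)+\varepsilon\bigl(f(x_0)-f(x)\bigr)$,
where the last step uses $S\subset\partial f(x)$. Only then let $\varepsilon\downarrow0$. No mean value theorem or directional derivative is needed. Monotonicity of $\partial f$, applied to $(x,v)$ and to $(d,d^*)\in\gra\partial f$ with $d\in D$ and $\|d-x-tu_\varepsilon\|\le t^2$, gives $\scal{d^*}{u_\varepsilon}\ge\scal{v}{u_\varepsilon}-t\bigl(\|d^*\|+\|v\|\bigr)$.

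Your replacement of $\xi$ by points of $D$ via norm-to-weak upper semicontinuity does not work as stated. Upper semicontinuity only bounds $\scal{d^*}{u_\varepsilon}$ from below by $\min_{w\in\partial f(\xi)}\scal{w}{u_\varepsilon}$, up to a small error. It gives no bound in terms of the particular $\xi^*$ produced by the mean value theorem.

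\emph{The ``unbounded case'' argument fails in infinite dimensions.} Normalized subgradients $\xi^*/\|\xi^*\|$ can converge weakly to $0$, so no nonzero normal direction emerges and nothing is absorbed by $N_{\dom f}(x)$. The missing idea is that, once some $v\in\partial f(x)$ is fixed, there is no unbounded case:
\begin{itemize}
\item Choose $\rho>0$ and $M\in\RR$ with $f\le M$ on $\BB_{2\rho}(x_0)$.
\item Let $y=(1-t)x+tz$ with $t\in\left]0,1\right]$ and $z\in\BB_\rho(x_0)$, and let $w\in\partial f(y)$.
\item Test the subgradient inequality at $y+t\rho b$ with $\|b\|\le1$, and use $f(y+t\rho b)\le(1-t)f(x)+tM$ and $f(y)\ge f(x)+t\scal{v}{z-x}$.
\item This gives $\rho\|w\|\le M-f(x)-\scal{v}{z-x}$, uniformly in $t$.
\end{itemize}
This one bound yields $S\neq\varnothing$. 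It also yields the weak compactness your limit passage needs: apply it with $x_0$ replaced by a point of $\inte\dom f$ on the open ray from $x$ in direction $u_\varepsilon$, with new $\rho$ and $M$.

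\emph{Closedness of $C$ is left open.} As it stands, separation only proves $\partial f(x)=\closu{C}$. In $\HH$ the finite-dimensional recession-cone criterion for closedness of a sum is not available. The same interiority settles it. For $c\in\partial f(x)$ and $n\in N_{\dom f}(x)$ one gets
$\scal{c+n}{x_0-x}+\rho\bigl(\|c\|+\|n\|\bigr)\le M-f(x)$.
Hence if $(c_k+n_k)$ converges, then $(c_k)$ and $(n_k)$ are bounded. Weak sequential compactness, together with weak closedness of $\closu{\conv}S$ and $N_{\dom f}(x)$, then puts the limit in $C$.

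With these repairs your separation scheme becomes a self-contained Hilbert-space proof of the result the paper only cites. Without them, the crucial inclusion is not established.
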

\begin{proof} In \eqref{e:zagrodny}, while
the first equality is \cite[Theorem 3.3]{thibault2},
the second equality follows from \cite[Theorem 3.34]{Bauschke-Combettes-2017}.
\end{proof}

The following auxiliary result says that almost differentiable functions in various disguises are in fact
the same, if the function has a nonempty interior domain.
\begin{lemma}\label{l:infinite} Let $f\in \CVF$ and $\inte\dom f\neq\varnothing$. Then
the following are equivalent:
\begin{enumerate}
\item\label{i:sin1} $\partial f$ is at most single-valued.
\item\label{i:sin2}  $f$ is G\^ateaux differentiable on the open convex set $\inte\dom f$, and $\partial f(x)=\varnothing$
for $x\in\HH\setminus(\inte\dom f)$.
\item\label{i:sin3}
 $f$ is G\^ateaux differentiable on the open convex set $\inte\dom f$, and $\lim_{i\rightarrow\infty}\|\nabla f(x_{i})\|
=+\infty$ whenever $(x_{i})_{i\in\NN}$ is a sequence in $\inte\dom f$ converging to a boundary point of
$\inte\dom f$.
\end{enumerate}
\end{lemma}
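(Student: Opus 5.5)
We prove the cycle \ref{i:sin1}$\Rightarrow$\ref{i:sin2}$\Rightarrow$\ref{i:sin3}$\Rightarrow$\ref{i:sin1}, using Fact~\ref{f:nonempty:interior} for boundary behaviour and Fact~\ref{f:thibault} to move from interior information to all of $\dom f$.

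\ref{i:sin1}$\Rightarrow$\ref{i:sin2}: Since $\inte\dom f\neq\varnothing$, $f$ is continuous on $\inte\dom f$ \cite[Corollary 8.39]{Bauschke-Combettes-2017}, hence subdifferentiable there. A singleton subdifferential at a point of continuity gives G\^ateaux differentiability \cite[Proposition 17.26]{Bauschke-Combettes-2017}. Now let $x\in\dom f\setminus\inte\dom f$, so $x\in\bdry\dom f$. By Fact~\ref{f:nonempty:interior} there is $w\in N_{\dom f}(x)\setminus\{0\}$. If $\partial f(x)\ni v$, then $v+\lambda w\in\partial f(x)$ for all $\lambda\geq 0$, since $\partial f(x)+N_{\dom f}(x)\subset\partial f(x)$ (exactly as in Lemma~\ref{l:finite}). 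This contradicts single-valuedness, so $\partial f(x)=\varnothing$; outside $\dom f$ this is trivial.

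\ref{i:sin2}$\Rightarrow$\ref{i:sin3}: Let $x_i\in\inte\dom f$ with $x_i\to x\in\bdry\inte\dom f=\bdry\dom f$. Note $\closu{\inte\dom f}=\closu{\dom f}$. Suppose $\|\nabla f(x_i)\|\not\to\infty$. Passing to a subsequence, $\nabla f(x_i)$ is bounded, so $\nabla f(x_i)\rightharpoonup y^*$ along a further subsequence. By lower semicontinuity of $f$ and the gradient inequality $f(z)\geq f(x_i)+\scal{\nabla f(x_i)}{z-x_i}$, we get $f(x)\leq\liminf f(x_i)<+\infty$. Indeed, $f(x_i)\leq f(z_0)-\scal{\nabla f(x_i)}{z_0-x_i}$ is bounded above for a fixed $z_0\in\dom f$. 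Passing to the limit (weak times strong convergence) then gives $f(z)\geq f(x)+\scal{y^*}{z-x}$. Hence $x\in\dom f$ and $y^*\in\partial f(x)$, contradicting $\partial f(x)=\varnothing$ for the boundary point $x$. The only care needed is justifying $\limsup f(x_i)\le\liminf$-type bookkeeping; using lsc for $f(x)\le\liminf f(x_i)$ together with the gradient inequality written at $x_i$ suffices.

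\ref{i:sin3}$\Rightarrow$\ref{i:sin1}: This is the main step, and where Fact~\ref{f:thibault} enters. Take $D=\inte\dom f$ and $x\in\dom f$.

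If $x\in\inte\dom f$, then $\partial f(x)=\{\nabla f(x)\}$ directly, since $f$ is G\^ateaux differentiable there.

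If $x\in\bdry\dom f$, any weakly convergent sequence $\nabla f(y_i)$ with $y_i\to x$, $y_i\in D$, is bounded, contradicting $\|\nabla f(y_i)\|\to\infty$. Hence $\limsup_{y\in D\to x}\partial f(y)=\varnothing$, and \eqref{e:zagrodny} gives $\partial f(x)=\closu{\conv}\varnothing+N_{\dom f}(x)=\varnothing$. So $\partial f$ is at most single-valued.

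The obstacle I anticipate is this boundary case: one must confirm that the empty $\limsup$ indeed yields an empty set in \eqref{e:zagrodny} rather than just $N_{\dom f}(x)$. I would read \eqref{e:zagrodny} as a Minkowski sum with the convention $\varnothing+N=\varnothing$. If that convention is doubtful, I would instead argue directly: if $v\in\partial f(x)$, pick $z\in\inte\dom f$ and set $y_t=x+t(z-x)$. By monotonicity, $\scal{\nabla f(y_t)-v}{y_t-x}\ge0$, and convexity along the segment keeps $\scal{\nabla f(y_t)}{z-x}$ bounded above. To turn this into a norm bound contradicting \ref{i:sin3}, I would fall back on Thibault's formula.
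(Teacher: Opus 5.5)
Your proof is correct and follows the paper's own argument. The three steps match the paper: the normal-cone argument via Fact~\ref{f:nonempty:interior} for \ref{i:sin1}$\Rightarrow$\ref{i:sin2}, the weak cluster point plus lower semicontinuity argument for \ref{i:sin2}$\Rightarrow$\ref{i:sin3}, and Fact~\ref{f:thibault} with $D=\inte\dom f$ for the return direction; your cycle simply merges the paper's \ref{i:sin3}$\Rightarrow$\ref{i:sin2} with the trivial \ref{i:sin2}$\Rightarrow$\ref{i:sin1}. Your worry about an empty $\limsup$ is unnecessary: a Minkowski sum with the empty set is empty by definition, and the paper reads \eqref{e:zagrodny} exactly this way (it infers $\limsup\neq\varnothing$ from $\partial f(x)\neq\varnothing$), so the unfinished fallback can be dropped.
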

\begin{proof}
``\ref{i:sin1}$\Rightarrow$\ref{i:sin2}": Suppose that $\partial f$ is at most single-valued.
For $x\in\inte\dom f$, $f$ is differentiable at $x$ by \cite[Theorem 17.18, Corollary 8.39]{Bauschke-Combettes-2017}
because $\partial f(x)$ is a singleton.
For $x\in\dom f\setminus(\inte\dom f)$ (if any), it is a boundary point, so $N_{\dom f}(x)\setminus\{0\}\neq\varnothing$
by Fact~\ref{f:nonempty:interior}.
If $\partial f(x) \neq\varnothing$, by the subgradient inequality or \cite[Proposition 21.17]{Bauschke-Combettes-2017},
we have $\partial f(x)+N_{\dom f}(x)\subset\partial f(x)$
implying that
$\partial f(x)$ is not a singleton, which is a contradiction. Therefore,
$\partial f(x)=\varnothing$ for every $x\in\dom f\setminus(\inte\dom f)$. Finally,
$\partial f(x)=\varnothing$ for $x\not\in\dom f$ by the definition.

``\ref{i:sin2}$\Rightarrow$\ref{i:sin1}": Clear.

``\ref{i:sin2}$\Rightarrow$\ref{i:sin3}": Let $(x_{i})_{i\in\NN}$ be a sequence in $\inte\dom f$ and
$x_{i}\rightarrow x$ with $x$ being a boundary point of $\inte\dom f$. If $(\|\nabla f(x_{i})\|)_{i\in\NN}$
does not goes to $\infty$, it must have a bounded subsequence. Without loss of generality, we can assume
$(\|\nabla f(x_{i})\|)_{i\in\NN}$ is bounded. Then there exists a subsequence $(\nabla f(x_{i_{k}}))_{k\in\NN}$
weakly convergent to some $v\in\HH$. Since
$$(\forall y\in\HH)\ f(y)\geq f(x_{i_{k}})+\scal{\nabla f(x_{i_{k}})}{y-x_{i_{k}}},$$
taking $\liminf$ as $k\rightarrow\infty$ both sides we get
$$(\forall y\in\HH)\ f(y)\geq f(x)+\scal{v}{y-x}$$
so that $v\in\partial f(x)$. This contradicts that $\partial f(x)=\varnothing$.

``\ref{i:sin3}$\Rightarrow$\ref{i:sin2}": Since $\partial f(x)=\varnothing$ for $x\not\in\dom f$,
we only need to show $\partial f(x)=\varnothing$ for $x\in\dom f\setminus(\inte\dom f)$.
Let $x\in\dom f\setminus(\inte\dom f)$ and $\partial f(x)\neq \varnothing$. Since
$\partial f(x)=\closu{\conv}^{w}\big(\limsup_{y\in\inte\dom f, y\rightarrow x}\partial f(y)\big)
+N_{\dom f}(x)$ by Fact~\ref{f:thibault}, we have
$\limsup_{y\in\inte\dom f, y\rightarrow x}\partial f(y)\neq \varnothing$. Then
there exists a sequence $(x_{i})_{i\in\NN}$ in $\inte\dom f$ such that $x_{i}\rightarrow x$ and $(\nabla f(x_{i}))_{i\in\NN}$
is weakly convergent. Thus $x_{i}\rightarrow x$ and $(\|\nabla f(x_{i})\|)_{i\in\NN}$ is bounded,
which is a contradiction.
\end{proof}

\begin{theorem}\label{t:nonempty:int} Let $f\in \CVF$ and $\inte\dom f\neq\varnothing$.
Then the following are equivalent:
\begin{enumerate}
\item\label{i:Ga} $f$ is G\^ateaux differentiable on the open convex
set on $\inte\dom f$ and $\partial f(x)=\varnothing$ for $x\in\HH\setminus(\inte\dom f)$.
\item\label{i:Al} $f^*$ is almost strictly convex, i.e., $f^*$ is strictly convex on convex subsets of $\dom\partial f^*$,
in which case $\dom \partial f^*=\nabla f(\inte\dom f)$.
\end{enumerate}
\end{theorem}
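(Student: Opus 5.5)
The plan is to reduce Theorem~\ref{t:nonempty:int} to two earlier results. Lemma~\ref{l:infinite} handles the primal side and Theorem~\ref{t:almost:diff} handles duality. Since $f\in\CVF$, we have $f^*\in\CVF$ and $f^{**}=f$ (Fenchel--Moreau). Theorem~\ref{t:almost:diff} applied to $f^*$ in place of $f$ therefore gives: $f^*$ is almost strictly convex if and only if $(f^*)^*=f$ is almost differentiable, i.e., $\partial f$ is at most single-valued (Definition~\ref{d:almost}).

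Because $\inte\dom f\neq\varnothing$, Lemma~\ref{l:infinite}, \ref{i:sin1}$\Leftrightarrow$\ref{i:sin2}, says that $\partial f$ being at most single-valued is equivalent to condition \ref{i:Ga}. Chaining these equivalences yields \ref{i:Ga}$\Leftrightarrow$\ref{i:Al}. The rephrasing in \ref{i:Al}, that $f^*$ is strictly convex on convex subsets of $\dom\partial f^*$, is just the definition of almost strict convexity. By Theorem~\ref{t:almost}, applied to $f^*$, it is also the same as strict monotonicity of $\partial f^*$.

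For the supplementary identity, assume \ref{i:Ga}. Since $\partial f^*=(\partial f)^{-1}$, we get $\dom\partial f^*=\ran\partial f=\partial f(\dom\partial f)$. Under \ref{i:Ga}, $\dom\partial f=\inte\dom f$: the inclusion $\supseteq$ holds because $f$ is continuous on $\inte\dom f$, which makes it subdifferentiable there, and $\subseteq$ holds by the emptiness of $\partial f$ off $\inte\dom f$. Also, on $\inte\dom f$, Gâteaux differentiability gives $\partial f(x)=\{\nabla f(x)\}$. Hence $\dom\partial f^*=\nabla f(\inte\dom f)$.

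There is no serious obstacle. The only points needing care are checking that Theorem~\ref{t:almost:diff} may be applied to $f^*$, which holds because $f^*\in\CVF$ and $f^{**}=f$, and making sure the hypothesis $\inte\dom f\neq\varnothing$ is used where Lemma~\ref{l:infinite} requires it.
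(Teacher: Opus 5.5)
Your proposal is correct and follows the paper's proof essentially step for step. The paper applies Theorem~\ref{t:almost:diff} to $f^*$ (using $f^{**}=f$, which it leaves implicit), combines it with Lemma~\ref{l:infinite} to pass between at-most-single-valuedness of $\partial f$ and condition~\ref{i:Ga}, and obtains the domain identity from the chain $\dom\partial f^*=\ran\partial f=\partial f(\inte\dom f)=\nabla f(\inte\dom f)$. Your explicit justification that $\dom\partial f=\inte\dom f$ under~\ref{i:Ga} fills in a detail the paper states without comment.
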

\begin{proof}
``\ref{i:Ga}$\Rightarrow$\ref{i:Al}'':
Since $\partial f$ is at most single-valued, $f$ is almost differentiable. Theorem~\ref{t:almost:diff} applies.

``\ref{i:Al}$\Rightarrow$\ref{i:Ga}'': Theorem~\ref{t:almost:diff} implies that $f$ is almost differentiable,
so $\partial f$ is at most single-valued. Lemma~\ref{l:infinite} applies.

The claim that $\dom \partial f^*=\nabla f(\inte\dom f)$ follows from
\begin{equation*}
\dom\partial f^*=\ran\partial f=\partial f(\dom\partial f)=\partial f(\inte\dom f)=\nabla f(\inte\dom f).
\end{equation*}
\end{proof}

Applying Theorem~\ref{t:nonempty:int} to $f^*$ we obtain an exact analogue of Corollary~\ref{c:rockwet}
in a general Hilbert space.
% comes as follows.
\begin{corollary}\label{c:dual:diff}
Let $f\in \CVF$ and $\inte\dom f^*\neq\varnothing$.
Then the following are equivalent:
\begin{enumerate}
\item\label{i:Alc} $f$ is almost strictly convex.
\item\label{i:Gac} $f^*$ is G\^ateaux differentiable on the open convex
set $\inte\dom f^*$, and $\partial f^*(x)=\varnothing$ for $x\in\HH\setminus(\inte\dom f^*)$.
\end{enumerate}
These conditions imply
$\dom\partial f=\nabla f^*(\inte\dom f^*) \text{ and }
\ran\partial f=\inte\dom f^*.$
\end{corollary}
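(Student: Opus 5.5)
The plan is to apply Theorem~\ref{t:nonempty:int} with $f^*$ in place of $f$, and then use $f^{**}=f$. Since $f\in\CVF$, we have $f^*\in\CVF$ and $f^{**}=f$ by Fenchel--Moreau, and the hypothesis $\inte\dom f^*\neq\varnothing$ is exactly the hypothesis of Theorem~\ref{t:nonempty:int} for $f^*$. That theorem, applied to $f^*$, states that $f^*$ is G\^ateaux differentiable on $\inte\dom f^*$ with $\partial f^*(x)=\varnothing$ off $\inte\dom f^*$ if and only if $(f^*)^*=f$ is almost strictly convex. This is precisely \ref{i:Gac}$\Leftrightarrow$\ref{i:Alc}.

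Alternatively, and to make the steps transparent, I would argue as follows. By Theorem~\ref{t:almost:diff}, \ref{i:Alc} holds if and only if $f^*$ is almost differentiable, i.e., $\partial f^*$ is at most single-valued. Because $\inte\dom f^*\neq\varnothing$, Lemma~\ref{l:infinite} (items \ref{i:sin1}$\Leftrightarrow$\ref{i:sin2}, applied to $f^*$) converts this into \ref{i:Gac}. The only point needing care is that Lemma~\ref{l:infinite} is stated for an arbitrary member of $\CVF$ with nonempty interior domain, so it applies verbatim to $f^*$.

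For the final assertions, I use $\partial f=(\partial f^*)^{-1}$. This gives
\[
\dom\partial f=\ran\partial f^*=\partial f^*(\dom\partial f^*).
\]
Under \ref{i:Gac}, $\dom\partial f^*\subset\inte\dom f^*$. Conversely, $f^*$ is continuous on $\inte\dom f^*$ (Corollary 8.39 of \cite{Bauschke-Combettes-2017}) and hence subdifferentiable there, so $\dom\partial f^*=\inte\dom f^*$. On this set $\partial f^*=\{\nabla f^*\}$, which yields $\dom\partial f=\nabla f^*(\inte\dom f^*)$. Similarly, $\ran\partial f=\dom(\partial f)^{-1}=\dom\partial f^*=\inte\dom f^*$.

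I expect no real obstacle here. The one step to check is the equality $\dom\partial f^*=\inte\dom f^*$, which requires both inclusions: emptiness of $\partial f^*$ off the interior (from \ref{i:Gac}), and subdifferentiability on the interior (from continuity of a convex lsc function on the interior of its domain in a Hilbert space).
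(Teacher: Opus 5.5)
Your proposal is correct and takes the paper's route: the paper gets this corollary by applying Theorem~\ref{t:nonempty:int} to $f^*$ and using $f^{**}=f$, and that theorem is itself proved via Theorem~\ref{t:almost:diff} and Lemma~\ref{l:infinite}, exactly as in your alternative argument. For the final identities, your explicit check that $\dom\partial f^*=\inte\dom f^*$, together with $\partial f=(\partial f^*)^{-1}$, spells out what the paper leaves implicit.
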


Moreover, the following result \cite[Corollary 18.11]{Bauschke-Combettes-2017} is immediate.
\begin{corollary}
Let $f\in \CVF$ such that $\dom\partial f=\inte\dom f$. Then the following are equivalent:
\begin{enumerate}
\item\label{i:diff:int} $f$ is G\^ateaux differentiable on the open convex
set on $\inte\dom f$.
\item $f^*$ is almost strictly convex, i.e., $f^*$ is strictly convex on convex subsets of $\dom\partial f^*$,
in which case $\dom \partial f^*=\nabla f(\inte\dom f)$.
\end{enumerate}
\end{corollary}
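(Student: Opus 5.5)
The plan is to deduce this corollary from Theorem~\ref{t:nonempty:int}, applied to $f$ itself. The hypothesis $\dom\partial f=\inte\dom f$ should supply everything that theorem needs.

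\emph{Step 1: check the hypotheses of Theorem~\ref{t:nonempty:int}.} Since $f$ is proper, $\dom\partial f\neq\varnothing$; this follows from the Br{\o}ndsted--Rockafellar theorem, since $\dom f\subseteq\closu{\dom\partial f}$. Hence $\inte\dom f=\dom\partial f\neq\varnothing$. We also record that $\partial f(x)=\varnothing$ for every $x\in\HH\setminus(\inte\dom f)$, because such $x$ lies outside $\dom\partial f=\inte\dom f$.

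\emph{Step 2: \ref{i:diff:int} is equivalent to condition~\ref{i:Ga} of Theorem~\ref{t:nonempty:int}.} Condition~\ref{i:Ga} consists of G\^ateaux differentiability on $\inte\dom f$ together with $\partial f=\varnothing$ off $\inte\dom f$. The second requirement holds automatically by Step~1. So under our hypothesis, condition~\ref{i:Ga} reduces exactly to \ref{i:diff:int}. Theorem~\ref{t:nonempty:int} then makes \ref{i:diff:int} equivalent to almost strict convexity of $f^*$, which is the second item.

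\emph{Step 3: the range formula.} When these conditions hold, $\partial f$ is single-valued on $\inte\dom f$ and empty elsewhere. Using $\partial f^*=(\partial f)^{-1}$ we get
\[
\dom\partial f^*=\ran\partial f=\partial f(\dom\partial f)=\partial f(\inte\dom f)=\nabla f(\inte\dom f),
\]
exactly as at the end of the proof of Theorem~\ref{t:nonempty:int}.

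\emph{Main obstacle.} The only step needing care is showing $\inte\dom f\neq\varnothing$ in Step~1. This requires noting that $\dom\partial f$ is nonempty for a proper lsc convex $f$, and it is where Br{\o}ndsted--Rockafellar (\cite[Theorem 16.58]{Bauschke-Combettes-2017}) is used. As an alternative route, one can skip Theorem~\ref{t:nonempty:int} and combine Theorem~\ref{t:almost:diff} with the fact that G\^ateaux differentiability at interior points is equivalent to $\partial f(x)$ being a singleton there (\cite[Theorem 17.18, Corollary 8.39]{Bauschke-Combettes-2017}).
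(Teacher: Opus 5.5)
Your proposal is correct and follows the paper's proof: the paper also gets $\inte\dom f\neq\varnothing$ from $\dom\partial f\neq\varnothing$ and then applies Theorem~\ref{t:nonempty:int}. You additionally spell out two points the paper leaves implicit: that $\partial f=\varnothing$ off $\inte\dom f$ is automatic under the hypothesis, and that nonemptiness of $\dom\partial f$ comes from Br{\o}ndsted--Rockafellar.
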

\begin{proof}
Because $\dom\partial f\neq\varnothing$, the assumption
$\dom\partial f=\inte\dom f$ implies $\inte\dom f\neq \varnothing$. It remains to apply
Theorem~\ref{t:nonempty:int}.
%\ref{i:diff:int} means that
%$f$ is almost differentiable. Then Theorem~\ref{t:almost:diff} applies.
%The claim $\dom \partial f^*=\nabla f(\inte\dom f)$ follows from
%\begin{equation}
%\dom\partial f^*=\ran\partial f=\partial f(\dom\partial f)=\partial f(\inte\dom f)=\nabla f(\inte\dom f).
%\end{equation}
\end{proof}

\begin{remark}
Some comments are in order.
\begin{enumerate}
\item Let $f\in \CVF$ with $\inte\dom f\neq\varnothing$.
If $f\in\CVF$ is almost differentiable, then $f$ is G\^ateaux differentiable on $\inte\dom f$ by
\cite[Theorem 17.18, Corollary 8.39]{Bauschke-Combettes-2017}. Simple examples show that the converse fails, e.g.,
$$f:\RR\rightarrow\RX: x\mapsto \begin{cases}
x, &\text{ if $x\geq 0$;}\\
+\infty, &\text{ if $x<0$.}
\end{cases}
$$
\item Let $f\in\CVF$. Then $\dom\partial f$ is open if and only if $\dom\partial f=\inte\dom f$.
Indeed, on one hand
$\dom\partial f$ being open
implies $\dom\partial f\subset\inte\dom f$ because $\dom\partial f\subset\dom f$; on the other hand,
$\inte\dom f\subset \dom\partial f$ always hold, see,
e.g., \cite[Corollary~8.39, Proposition~16.17(iv)]{Bauschke-Combettes-2017}. Thus,
$\dom\partial f$ being open gives $\dom\partial f=\inte\dom f$. The other direction is clear.
\end{enumerate}
\end{remark}

The following result establishes a characterization of $\partial f$ being one-to-one.
\begin{corollary} Let $f\in\CVF$. Then $\partial f$ is one-to-one if and only if $f$ is almost strictly convex and
almost differentiable. If, in addition, $\inte\dom f\neq\varnothing$, then
$\partial f$ is one-to-one if and only if $f$ is strictly convex and G\^ateaux differentiable on $\inte\dom f$, and
$\partial f(x)=\varnothing$ for $x\in\HH\setminus(\inte\dom f)$.
\end{corollary}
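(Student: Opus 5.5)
The plan is to unpack ``one-to-one'' into two single-valuedness conditions and treat each with a result already proved. By definition, $\partial f$ is one-to-one if and only if both $\partial f$ and $(\partial f)^{-1}$ are at most single-valued.

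\emph{First condition.} That $\partial f$ is at most single-valued is, by Definition~\ref{d:almost}, exactly the statement that $f$ is almost differentiable. Nothing more is needed here.

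\emph{Second condition.} For $(\partial f)^{-1}$, I use that $\partial f$ is paramonotone \cite[Example 22.4(i)]{Bauschke-Combettes-2017} and maximally monotone \cite[Theorem 20.25]{Bauschke-Combettes-2017}. So Theorem~\ref{t:para}\ref{i:m:strict}$\Leftrightarrow$\ref{i:inverse} applies: $(\partial f)^{-1}$ is at most single-valued if and only if $\partial f$ is strictly monotone. Theorem~\ref{t:almost}\ref{i:falmost}$\Leftrightarrow$\ref{i:full} then says this holds if and only if $f$ is almost strictly convex. Putting the two conditions together proves the first assertion.

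\emph{Second assertion, differentiability part.} Now assume $\inte\dom f\neq\varnothing$. Lemma~\ref{l:infinite}\ref{i:sin1}$\Leftrightarrow$\ref{i:sin2} converts almost differentiability into: $f$ is G\^ateaux differentiable on $\inte\dom f$ and $\partial f(x)=\varnothing$ for every $x\notin\inte\dom f$.

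\emph{Second assertion, convexity part.} Under that condition we get $\dom\partial f\subset\inte\dom f$. Since $\inte\dom f\subset\dom\partial f$ always holds (\cite[Corollary~8.39, Proposition~16.17]{Bauschke-Combettes-2017}), it follows that $\dom\partial f=\inte\dom f$, which is an open convex set. Every segment in $\dom\partial f$ therefore lies in the convex set $\inte\dom f$, and conversely. Hence almost strict convexity of $f$ is the same as strict convexity of $f$ on $\inte\dom f$, and the equivalence follows from the first part.

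The only step needing care is the reading of ``$f$ is strictly convex'' in the second assertion. What the argument above delivers is strict convexity on $\inte\dom f$; this is the meaning under which the equivalence holds, as in the preceding corollary with $\dom\partial f=\inte\dom f$. If strict convexity on all of $\dom f$ were meant, one would additionally have to rule out affine segments in the boundary of $\dom f$, where $\partial f$ is empty. The subdifferential gives no information there, and Rockafellar's Example~\ref{e:almost:strict} shows that such segments can occur. So I would state and prove the result with strict convexity understood on the open convex set $\inte\dom f$.
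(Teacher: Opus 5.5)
Your proposal is correct and follows the paper's proof. The paper obtains almost strict convexity from single-valuedness of $(\partial f)^{-1}=\partial f^*$ via Theorem~\ref{t:almost:diff}, whose proof is exactly your combination of Theorems~\ref{t:para} and~\ref{t:almost}; it then uses Lemma~\ref{l:infinite} to reduce almost strict convexity to strict convexity on $\inte\dom f$, which is precisely the reading you adopt, and Example~\ref{e:almost:strict} shows that the stronger reading (strict convexity on all of $\dom f$) would make the statement false.
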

\begin{proof} $\partial f$ being one-to-one means: $\partial f$ is at most single-valued, so $f$ is almost differentiable; and
%$\partial f$ is injective, so
$(\partial f)^{-1}$ is at most single-valued, so $f^*$ is almost differentiable.
The latter ensures that $f$ is almost strictly convex by Theorem~\ref{t:almost:diff}.

If $\inte\dom f\neq\varnothing$, by Lemma~\ref{l:infinite}, $f$ being almost differentiable is equivalent
to that $f$ is G\^ateaux differentiable on the open convex set $\inte\dom f$ and $\partial f(x)=\varnothing$ for
$x\in\HH\setminus (\inte\dom f)$. Thus, $f$ being almost strictly convex reduces to that
$f$ is strictly convex on $\inte\dom f$.
\end{proof}

%\begin{remark} Observe that $\dom\partial f=\inte\dom f$ if and only if $\dom\partial f$ is open.
%Indeed, since $\dom\partial f\subset\dom f$ and $\dom\partial f$ is open, it follows that
%$\dom\partial f\subset\inte\dom f\subset \dom\partial f,$
%as required.
%\end{remark}
We end this section with some results on Moreau envelopes of almost strictly convex functions.
For $f\in\CVF$ and parameter value $\lambda \in \left]0,+\infty\right[$,
the Moreau envelope $e_{\lambda}f$ and
proximal mapping $\prox{\lambda}f$
are defined by
$$e_{\lambda}f(x):=\inf_{w\in\HH}\left\{f(w)+\frac{1}{2\lambda}\|x-w\|^2\right\},$$
$$\prox{\lambda}f(x):=\argmin_{w\in\HH}\left\{f(w)+\frac{1}{2\lambda}\|x-w\|^2\right\}.$$
Set $\jj:=\tfrac{1}{2}\|\cdot\|^2$. The following result extends \cite[Theorem 3.7]{planiden19} from
a finite-dimensional space to a general Hilbert space.

\begin{theorem}\label{t:envel} Let $f\in\CVF$ and let the parameter
$\lambda\in \left]0,+\infty\right[$. Then the following are equivalent:
\begin{enumerate}
\item\label{i:f:alconvex} $f$ is almost
strictly convex.
\item\label{i:env:sconvex} $e_{\lambda}f$ is strictly convex.
\item \label{i:prox:strict}
$\prox{\lambda}f$ is strictly nonexpansive, i.e.,
$$(\forall x\neq y)\ \|\prox{\lambda}f(x)-\prox{\lambda}f(y)\|<\|x-y\|.$$
\end{enumerate}
\end{theorem}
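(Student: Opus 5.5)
The plan is to pass everything through the subdifferential $\partial(e_{\lambda}f)$ and reduce to Theorem~\ref{t:almost} and Theorem~\ref{t:para}. The tools are the standard Moreau envelope identities (see \cite{Bauschke-Combettes-2017}):
\begin{itemize}
\item $e_{\lambda}f$ is convex and Fr\'echet differentiable on all of $\HH$;
\item $\nabla e_{\lambda}f=\lambda^{-1}(\Id-\prox{\lambda}f)$;
\item $\prox{\lambda}f=J_{\lambda\partial f}$;
\item $(e_{\lambda}f)^*=f^*+\lambda\jj$.
\end{itemize}
Since $\dom e_{\lambda}f=\HH$ is open, Theorem~\ref{t:open:closed} (or Corollary~\ref{c:nice} with $O=\HH$) gives the following: $e_{\lambda}f$ is strictly convex if and only if $\nabla e_{\lambda}f$ is strictly monotone. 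That is, if and only if $\Id-J_{\lambda\partial f}$ is strictly monotone, since the factor $\lambda^{-1}>0$ does not affect strict monotonicity.

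Next, I apply Theorem~\ref{t:para} to $A:=\lambda\partial f=\partial(\lambda f)$. This operator is maximally monotone and paramonotone, because it is the subdifferential of $\lambda f\in\CVF$ (\cite[Example 22.4, Theorem 20.25]{Bauschke-Combettes-2017}). Theorem~\ref{t:para} gives the chain
\[
A \text{ strictly monotone} \Leftrightarrow J_{A} \text{ strictly nonexpansive} \Leftrightarrow \Id-J_{A} \text{ strictly monotone}.
\]
Since $J_A=\prox{\lambda}f$, the middle condition is exactly \ref{i:prox:strict}, and by the previous paragraph the last condition is exactly \ref{i:env:sconvex}. Finally, $A=\lambda\partial f$ is strictly monotone if and only if $\partial f$ is, and by Theorem~\ref{t:almost} this holds if and only if $f$ is almost strictly convex, which is \ref{i:f:alconvex}. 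This closes the cycle \ref{i:f:alconvex}$\Leftrightarrow$\ref{i:prox:strict}$\Leftrightarrow$\ref{i:env:sconvex}.

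As an alternative route, or a sanity check, I could argue by duality. Using $(e_{\lambda}f)^*=f^*+\lambda\jj$ and Theorem~\ref{t:almost:diff}, $e_{\lambda}f$ is strictly convex if and only if it is almost strictly convex, because its subdifferential domain is $\HH$. That in turn holds if and only if $(e_\lambda f)^*$ is almost differentiable. Since $\partial(f^*+\lambda\jj)=\partial f^*+\lambda\Id$ (valid because $\jj$ is continuous everywhere), this is equivalent to $\partial f^*$ being at most single-valued, i.e.\ to $f^*$ being almost differentiable, which is again \ref{i:f:alconvex}.

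The main point needing care is the identification of the envelope gradient and the prox with the resolvent in a general Hilbert space, together with the need for the everywhere-defined domain of $e_{\lambda}f$ to invoke Theorem~\ref{t:open:closed}. These are standard facts, but I will cite them precisely from \cite{Bauschke-Combettes-2017}, namely Proposition 12.30 and Example 23.3. The remaining steps are direct applications of earlier results.
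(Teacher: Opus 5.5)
Your proof is correct, but your primary chain is not the paper's route; your ``alternative route'' is.

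\emph{The paper's route.} It proves that almost strict convexity of $f$ is equivalent to strict convexity of $e_{\lambda}f$ by duality, exactly as in your sanity check. It uses $(e_{\lambda}f)^*=f^*+\lambda\jj$ and the fact that adding $\lambda\Id$ neither creates nor destroys single-valuedness of $\partial f^*$. Two applications of Theorem~\ref{t:almost:diff} then show that $e_{\lambda}f$ is almost strictly convex if and only if $f$ is, and the full domain of $\nabla e_{\lambda}f$ upgrades this to strict convexity. It then links $e_{\lambda}f$ to $\prox{\lambda}f$ through Theorem~\ref{t:open:closed} and a firm-nonexpansiveness/Cauchy--Schwarz computation. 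That computation is the same one that proves, in Theorem~\ref{t:para}, the equivalence of strict nonexpansiveness of $J_A$ and strict monotonicity of $\Id-J_A$, which you invoke.

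\emph{Your main route.} You obtain the equivalence of almost strict convexity of $f$ and strict nonexpansiveness of $\prox{\lambda}f$ directly, by applying Theorem~\ref{t:para} to the paramonotone, maximally monotone operator $\lambda\partial f=\partial(\lambda f)$ with $J_{\lambda\partial f}=\prox{\lambda}f$. This is precisely the alternative the authors mention in the remark following the theorem.

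\emph{Comparison.} Your route gives a single uniform chain that shows the result as a special case of the resolvent characterization of strictly monotone paramonotone operators, with no conjugate calculus. The price is that the link between strict monotonicity of $A$ and strict nonexpansiveness of $J_A$ in Theorem~\ref{t:para} is not proved in the paper. It is imported from an external result relating disjoint injectivity of $A$ to strict nonexpansiveness of $J_A$. The paper's duality proof, apart from standard envelope/conjugate identities, uses only the in-text equivalence between strict monotonicity of $\partial f$ and at-most-single-valuedness of $(\partial f)^{-1}=\partial f^*$. It also fits the section's theme of dualizing almost strict convexity into almost differentiability, which is reused in the proximal-average corollary.
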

\begin{proof}
``\ref{i:f:alconvex}$\Rightarrow$\ref{i:env:sconvex}'':
Assume that $f$ is almost strictly convex.
By Theorem~\ref{t:almost:diff}, $f^*$ is almost differentiable, i.e.,
$\partial f^*$ is at most single-valued.
Since
$(e_{\lambda}f)^*=f^*+\lambda \jj$, implying $\partial (e_{\lambda}f)^*=\partial f^*+\lambda\Id$,
we have that $(e_{\lambda}f)^*$ is almost differentiable. Applying Theorem~\ref{t:almost:diff}
again gives that $e_{\lambda}f$ is almost strictly convex.
Because $\nabla e_{\lambda}f=(\Id-\prox{\lambda}f)/\lambda$ has full domain, we deduce that
$e_{\lambda}f$ is strictly convex on $\HH$.

``\ref{i:env:sconvex}$\Rightarrow$\ref{i:f:alconvex}'': Assume that $e_{\lambda}f$ is strictly convex. Then
$e_{\lambda}f$ is almost strictly convex, so $(e_{\lambda}f)^*=f^*+\lambda\jj$ is almost differentiable
by Theorem~\ref{t:almost:diff}.
This implies that $f^*$ is almost differentiable, thus $f$ is almost strictly convex by Theorem~\ref{t:almost:diff}
again.

``\ref{i:env:sconvex}$\Rightarrow$\ref{i:prox:strict}'':
Because $\dom e_{\lambda}f =\HH$,
\ref{i:env:sconvex} is equivalent to $\nabla e_{\lambda}f$ being strictly monotone
by Theorem~\ref{t:open:closed}\ref{i:open}.
Then
\begin{equation*}
(\forall x\neq y)\ \scal{x-y}{\lambda^{-1}(x-\prox{\lambda}f(x))-\lambda^{-1}(y-\prox{\lambda}f(y))}>0,
\end{equation*}
which simplifies to
$$\|x-y\|^2>\scal{x-y}{\prox{\lambda}f(x)-\prox{\lambda}f(y)}.$$
Since $\prox{\lambda}f$ is firmly nonexpansive, we have
$$\scal{x-y}{\prox{\lambda}f(x)-\prox{\lambda}f(y)}\geq \|\prox{\lambda}f(x)-\prox{\lambda}f(y)\|^2.$$
Thus, $\|\prox{\lambda}f(x)-\prox{\lambda}f(y)\|<\|x-y\|$.

``\ref{i:prox:strict}$\Rightarrow$\ref{i:env:sconvex}'':
For $x\neq y$, the Cauchy-Schwarz inequality gives
\begin{align*}
\scal{x-y}{\prox{\lambda}f(x)-\prox{\lambda}f(y)} &\leq \|x-y\|\|\prox{\lambda}f(x)-\prox{\lambda}f(y)\|\\
&<\|x-y\|\|x-y\|=\|x-y\|^2,
\end{align*}
from which
\begin{align*}
& \scal{x-y}{\nabla e_{\lambda}f(x)-\nabla e_{\lambda}f(y)}\\
&=\scal{x-y}{\lambda^{-1}(x-\prox{\lambda}f(x))-\lambda^{-1}(y-\prox{\lambda}f(y))}>0,
\end{align*}
i.e., $\nabla e_{\lambda}f$ is strictly monotone. Hence $e_{\lambda}f$ is strictly convex by
Theorem~\ref{t:open:closed}.
\end{proof}

\begin{remark} Theorem~\ref{t:envel}\ref{i:f:alconvex}$\Leftrightarrow$\ref{i:prox:strict}
can also be obtained by using \cite[Theorem 2.1(ix)]{moffat12} and paramonotoncity of $\partial f$.
Theorem~\ref{t:envel} also extends \cite[Lemma 3.8]{moffat12} from a finite-dimensional space to
a general Hilbert space.
\end{remark}

Theorem~\ref{t:envel} can significantly simplify some proofs on
the proximal average \cite{bglw08} and
\cite[Section~14.2]{Bauschke-Combettes-2017}.
%Let $f_{1}, f_{2}\in\CVF$ and parameters $\lambda, \alpha>0$. Recall that the proximal average of $f_{1}, f_{2}$
%is defined by
%\begin{equation}\label{e:prox:def}
%f:=\big(\alpha (f_{1}+\lambda^{-1}\jj)*+(1-\alpha)(f_{2}+\lambda^{-1}\jj)^*\big)^*-\lambda^{-1}\jj.
%\end{equation}
\begin{corollary} Let $f_{1}, f_{2}\in\CVF$, let the parameter $\lambda \in \left]0,+\infty\right[$,
  and let the parameter $\alpha\in \left]0,1\right[$.
Define the proximal average of $f_{1}, f_{2}$ by
\begin{equation}\label{e:prox:def}
f:=\big(\alpha (f_{1}+\lambda^{-1}\jj)^*+(1-\alpha)(f_{2}+\lambda^{-1}\jj)^*\big)^*-\lambda^{-1}\jj.
\end{equation}
%$f$ be the proximal average given by \eqref{e:prox:def}.
Then the following hold:
\begin{enumerate}
\item\label{i:s:convex} If one of $f_{1}, f_{2}$
is almost strictly convex, then
$f$ is almost strictly convex.
\item\label{i:d:differ} If one of $f_{1}, f_{2}$ is almost differentiable, then
$f$ is almost differentiable.
\end{enumerate}
\end{corollary}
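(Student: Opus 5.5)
We work on the dual side, where the proximal average becomes an ordinary convex combination. Set $g_i:=f_i+\lambda^{-1}\jj$ for $i=1,2$. By \eqref{e:prox:def},
$$(f+\lambda^{-1}\jj)^{**}=f+\lambda^{-1}\jj .$$
We first check that $f+\lambda^{-1}\jj=\big(\alpha g_1^*+(1-\alpha)g_2^*\big)^*$ lies in $\CVF$. Each $g_i$ is strongly convex, so $g_i^*$ is real-valued, convex and Fréchet differentiable on $\HH$ with a $\lambda$-Lipschitz gradient. Hence $h:=\alpha g_1^*+(1-\alpha)g_2^*$ is also finite, convex and differentiable everywhere, and $f+\lambda^{-1}\jj=h^*$, so $(f+\lambda^{-1}\jj)^*=h$ (the standard fact that $f\in\CVF$, as in \cite[Section~14.2]{Bauschke-Combettes-2017}). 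The tool used repeatedly is Theorem~\ref{t:almost:diff} together with the observation that adding $\lambda^{-1}\jj$ preserves both properties. If $\varphi\in\CVF$, then $\partial(\varphi+\lambda^{-1}\jj)=\partial\varphi+\lambda^{-1}\Id$, with the sum rule valid since $\jj$ is continuous. Consequently $\varphi+\lambda^{-1}\jj$ is almost differentiable if and only if $\varphi$ is. Likewise, by Theorem~\ref{t:almost}, $\varphi+\lambda^{-1}\jj$ is almost strictly convex if and only if $\partial\varphi+\lambda^{-1}\Id$ is strictly monotone. That holds for every $\varphi\in\CVF$, so the "only if" direction for strict convexity cannot be read off from $\varphi+\lambda^{-1}\jj$; we must track $f$ itself.

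\ref{i:s:convex}: Suppose $f_1$ is almost strictly convex. By Theorem~\ref{t:almost:diff}, $f_1^*$ is almost differentiable, i.e., $\partial f_1^*$ is at most single-valued. Now $g_1^*=f_1^*\,\square\,\lambda\jj$, and $\nabla g_1^*=(\partial f_1+\lambda^{-1}\Id)^{-1}$ is the resolvent-type map $x\mapsto \prox{\lambda}{f_1}(\lambda x)$. We have $f=h^*-\lambda^{-1}\jj$ and want $\partial f=\partial h^*-\lambda^{-1}\Id=(\nabla h)^{-1}-\lambda^{-1}\Id$ to be strictly monotone. Take $u_j\in\partial f(x_j)$ for $j=0,1$ with $x_0\neq x_1$. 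Put $y_j:=u_j+\lambda^{-1}x_j$, so that $x_j=\nabla h(y_j)=\alpha\nabla g_1^*(y_j)+(1-\alpha)\nabla g_2^*(y_j)$. Then
$$\scal{u_0-u_1}{x_0-x_1}=\scal{y_0-y_1}{x_0-x_1}-\lambda^{-1}\|x_0-x_1\|^2 .$$
Each $\nabla g_i^*$ is $\lambda$-cocoercive, i.e., firmly nonexpansive after scaling, so
$$\scal{y_0-y_1}{\nabla g_i^*(y_0)-\nabla g_i^*(y_1)}\geq \lambda^{-1}\|\nabla g_i^*(y_0)-\nabla g_i^*(y_1)\|^2 .$$
Average these two inequalities with weights $\alpha$ and $1-\alpha$, and then use convexity of $\|\cdot\|^2$:
$$\alpha\|a_1\|^2+(1-\alpha)\|a_2\|^2\geq\|\alpha a_1+(1-\alpha)a_2\|^2=\|x_0-x_1\|^2 .$$
This gives $\scal{u_0-u_1}{x_0-x_1}\geq 0$.

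It remains to rule out equality. Equality forces equality in the cocoercivity inequality for $i=1$. By Theorem~\ref{t:para}\ref{i:m:strict}$\Leftrightarrow$\ref{i:resolvent} (strict monotonicity versus strict nonexpansiveness of the resolvent), applied to $\partial f_1$ with appropriate scaling, equality in firm nonexpansiveness means the corresponding points satisfy $\scal{p_0-p_1}{w_0-w_1}=0$ with $w_j\in\partial f_1(p_j)$. By strict monotonicity of $\partial f_1$ (Theorem~\ref{t:almost}) this forces $p_0=p_1$, i.e., $\nabla g_1^*(y_0)=\nabla g_1^*(y_1)$. Equality in the convexity of the squared norm also forces $\nabla g_1^*(y_0)-\nabla g_1^*(y_1)=\nabla g_2^*(y_0)-\nabla g_2^*(y_1)$. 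Together these give $x_0=x_1$, a contradiction. So $\partial f$ is strictly monotone, and Theorem~\ref{t:almost} shows $f$ is almost strictly convex. The main obstacle is this equality-case analysis. The cleanest route is to note that the first identity reads $\scal{p_0-p_1}{w_0-w_1}=0$ exactly, where $w_j:=y_j-\lambda^{-1}p_j\in\partial f_1(p_j)$.

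\ref{i:d:differ}: Use duality. For $\varphi\in\CVF$, Moreau's decomposition and the self-duality of $\jj$ give $f^*$ as the proximal average of $f_1^*,f_2^*$ with parameter $\lambda^{-1}$, i.e., $f^*=\mathcal P_{\lambda^{-1}}(f_1^*,f_2^*;\alpha)$ (the known self-duality of the proximal average, \cite[Section~14.2]{Bauschke-Combettes-2017}; we cite it). If $f_1$ is almost differentiable, then Theorem~\ref{t:almost:diff}, applied to $f_1^*$ with $f_1^{**}=f_1$, shows $f_1^*$ is almost strictly convex. Part \ref{i:s:convex} then makes $f^*$ almost strictly convex, and Theorem~\ref{t:almost:diff} once more gives that $f=f^{**}$ is almost differentiable.
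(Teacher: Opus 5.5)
Your proof is correct, but it takes a different route from the paper.

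\textbf{The paper's route.} It restates the definition as $e_{\lambda^{-1}}f^*=\alpha e_{\lambda^{-1}}f_1^*+(1-\alpha)e_{\lambda^{-1}}f_2^*$. It then uses Moreau's decomposition to get $e_{\lambda}f=\alpha e_{\lambda}f_1+(1-\alpha)e_{\lambda}f_2$. After that, each part is one line from Theorem~\ref{t:envel} ($\varphi$ almost strictly convex $\Leftrightarrow$ $e_\lambda\varphi$ strictly convex), plus the fact that a convex combination with positive weight on a strictly convex function is strictly convex. Part (ii) is the same argument run on the conjugates, closed off with Theorem~\ref{t:almost:diff}.

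\textbf{Your route.} You prove (i) directly at the level of subdifferentials. You use $\partial f(x)+\lambda^{-1}x\subset(\nabla h)^{-1}(x)$, the cocoercivity of $\nabla g_i^*$, and the identity $\alpha\|a_1\|^2+(1-\alpha)\|a_2\|^2-\|\alpha a_1+(1-\alpha)a_2\|^2=\alpha(1-\alpha)\|a_1-a_2\|^2$. The equality case then collapses to $a_1=a_2$ and $\scal{w_0-w_1}{p_0-p_1}=0$ with $w_j\in\partial f_1(p_j)$. So the only result you need from the paper is Theorem~\ref{t:almost}, and the argument shows exactly where $\alpha\in\left]0,1\right[$ enters.

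\textbf{How the two compare.} The routes are mirror images. Each uses the defining identity $(f+\lambda^{-1}\jj)^*=\alpha g_1^*+(1-\alpha)g_2^*$ directly for one part and needs Moreau's decomposition for the other. The paper needs it for (i). You need it for (ii), because the self-duality of the proximal average that you cite is equivalent to $e_\lambda f=\alpha e_\lambda f_1+(1-\alpha)e_\lambda f_2$; for general $\lambda,\alpha$ the reference is \cite{bglw08}.

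\textbf{A way to drop that citation.} You can stay entirely in your framework for (ii). Suppose $\nabla h(y_0)=\nabla h(y_1)$. Then $0=\scal{y_0-y_1}{\nabla h(y_0)-\nabla h(y_1)}\geq\lambda^{-1}\big(\alpha\|a_1\|^2+(1-\alpha)\|a_2\|^2\big)$ forces $a_1=0$. With $p=\nabla g_1^*(y_0)$ this gives $y_0,y_1\in\partial f_1(p)+\lambda^{-1}p$. Single-valuedness of $\partial f_1$ then gives $y_0=y_1$. Hence $(\nabla h)^{-1}$, and therefore $\partial f$, is at most single-valued. This removes Moreau's decomposition from the whole proof.

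\textbf{Two small slips.}
\begin{enumerate}
\item $\nabla g_i^*$ is $\lambda^{-1}$-cocoercive, not $\lambda$-cocoercive. Your displayed inequality is right; only the name is wrong.
\item The appeal to Theorem~\ref{t:para} in the equality step is beside the point, since that theorem concerns strict nonexpansiveness. What does the work is the direct identity $\scal{y_0-y_1}{p_0-p_1}-\lambda^{-1}\|p_0-p_1\|^2=\scal{w_0-w_1}{p_0-p_1}$, which you also give.
\end{enumerate}
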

\begin{proof}
Write \eqref{e:prox:def} as
$(f+\lambda^{-1}\jj)^*=\alpha (f_{1}+\lambda^{-1}\jj)^*+(1-\alpha)(f_{2}+\lambda^{-1}\jj)^*$,
i.e.,
\begin{equation}\label{e:moreau:dual}
e_{\lambda^{-1}}f^*=\alpha e_{\lambda^{-1}}f_{1}^*+(1-\alpha)e_{\lambda^{-1}}f_{2}^*.
\end{equation}
Thus, \eqref{e:moreau:dual} and Moreau's decomposition \cite[Theorem 14.3(i)]{Bauschke-Combettes-2017}
 together yield
\begin{equation}\label{e:moreau:primal}
e_{\lambda}f=\alpha e_{\lambda}f_{1}+(1-\alpha)e_{\lambda}f_{2}.
\end{equation}

\ref{i:s:convex}: Suppose, without loss of generality, that $f_{1}$ is almost strictly convex.
Then
$e_{\lambda}f_{1}$ is strictly convex by Theorem~\ref{t:envel}. Since
\eqref{e:moreau:primal} holds,
%$e_{\lambda}f=\alpha e_{\lambda}f_{1}+(1-\alpha) e_{\lambda}f_{2}$,
we have that $e_{\lambda}f$ is strictly convex. Thus,
$f$ is almost strictly convex by Theorem~\ref{t:envel}.

\ref{i:d:differ}: Suppose, without loss of generality, that $f_{1}$ is almost differentiable.
Then $f_{1}^*$ is almost strictly convex
by Theorem~\ref{t:almost:diff}, so that
$e_{\lambda^{-1}}f_{1}^*$ is strictly convex by Theorem~\ref{t:envel}.
In view of \eqref{e:moreau:dual}
%$e_{\lambda}f^*=\alpha e_{\lambda}f_{1}^*+(1-\alpha) e_{\lambda}f_{2}^*$,
we have that $e_{\lambda^{-1}}f^*$ is strictly convex.
Then $f^*$ is almost strictly convex by Theorem~\ref{t:envel}, so that $f$ is almost differentiable by Theorem~\ref{t:almost:diff}.
\end{proof}

\section{Perspectives and open problems}\label{s:openprobs}
In this last section, we study almost strictly convex functions from
the tilted-stable optimization point of view
and present some open questions.
%versus
For every $f\in \CVF$ and $x^*\in\HH$, one has
\begin{equation}\label{e:tilted}
\partial f^*(x^*)=\argmin_{x\in\HH}\big(f(x)-\scal{x^*}{x}\big).
\end{equation}
Moreover, $\dom\partial f^*=\ran\partial f$ and $\ran\partial f^*=\dom\partial f$.
\eqref{e:tilted} says that
$\partial f^*(x^*)$ is the set of (global) minimizers of the function
$x\mapsto f(x)-\scal{x^*}{x}$, a tilted version of $f$.
The continuity of $x^*\mapsto \argmin_{x\in\HH}\big(f(x)-\scal{x^*}{x}\big)$ is the central topic
of tilted-stable minimization (local or global); see, e.g., \cite{nghia, poliquin, volle12}.
Observe that different differentiabilities
correspond to different continuities of the gradient mapping of a convex function; see, e.g.,
\cite[Section 17.6]{Bauschke-Combettes-2017} and \cite{phelps}. This leads to

\begin{lemma}\label{l:weak:c}
Let $f\in \CVF$ and $\inte\dom f^*\neq\varnothing$.
If $f^*$ is G\^ateaux differentiable on the open convex
set $\inte\dom f^*$, then
the mapping
$$x^*\mapsto \argmin_{x\in\HH}\big(f(x)-\scal{x^*}{x}\big)$$
is single-valued and strong-to-weak (i.e., norm-to-weak) continuous on $\inte\dom f^*$.
\end{lemma}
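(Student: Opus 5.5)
The plan is to identify the argmin map with $\partial f^*$ through \eqref{e:tilted}, and then use the standard norm-to-weak continuity of the gradient of a Gâteaux differentiable convex function on an open set.

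By \eqref{e:tilted}, for every $x^*\in\HH$ we have $\argmin_{x\in\HH}\big(f(x)-\scal{x^*}{x}\big)=\partial f^*(x^*)$. Since $f^*\in\CVF$ and $\inte\dom f^*\neq\varnothing$, $f^*$ is continuous on $\inte\dom f^*$ by \cite[Corollary 8.39]{Bauschke-Combettes-2017}, so $\partial f^*(x^*)\neq\varnothing$ there. Gâteaux differentiability at $x^*$ then gives $\partial f^*(x^*)=\{\nabla f^*(x^*)\}$ by \cite[Proposition 17.31]{Bauschke-Combettes-2017}. Hence the argmin map is single-valued on $\inte\dom f^*$ and coincides with $\nabla f^*$ there.

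It remains to prove that $\nabla f^*$ is norm-to-weak continuous on $\inte\dom f^*$. Fix $x^*\in\inte\dom f^*$ and a sequence $x_k^*\to x^*$ in $\inte\dom f^*$ (sequences suffice, since $\HH$ is metric in the norm). The argument has three steps.

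\emph{Step 1 (local boundedness).} Since $f^*$ is continuous at $x^*$, the maximally monotone operator $\partial f^*$ is locally bounded at $x^*$, for instance by \cite[Proposition 16.17(iii)]{Bauschke-Combettes-2017}. So $(\nabla f^*(x_k^*))_k$ is bounded for large $k$. I expect this to be the only nontrivial point.

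\emph{Step 2 (identifying cluster points).} Let $v$ be any weak cluster point, say $\nabla f^*(x_{k_j}^*)\rightharpoonup v$. For each $y^*\in\HH$, the subgradient inequality gives
\[
f^*(y^*)\geq f^*(x_{k_j}^*)+\scal{\nabla f^*(x_{k_j}^*)}{y^*-x_{k_j}^*}.
\]
Pass to the limit: $f^*(x_{k_j}^*)\to f^*(x^*)$ by continuity, and the inner product converges because it pairs a weakly convergent sequence with a strongly convergent one. This yields $v\in\partial f^*(x^*)=\{\nabla f^*(x^*)\}$, the same limit-passing used in the proof of Lemma~\ref{l:infinite}.

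\emph{Step 3 (conclusion).} The sequence is bounded and every weakly convergent subsequence has the same limit $\nabla f^*(x^*)$, so the whole sequence converges weakly to $\nabla f^*(x^*)$. This is the required norm-to-weak continuity.
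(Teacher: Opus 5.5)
Your proof is correct and follows the paper's route. The paper also identifies the argmin map with $\partial f^*$ via \eqref{e:tilted}, and then cites \cite[Corollary 17.42]{Bauschke-Combettes-2017} for the norm-to-weak continuity of $\nabla f^*$ on $\inte\dom f^*$; your Steps 1--3 (local boundedness, identifying weak cluster points through the subgradient inequality, and the subsequence argument) simply prove that cited corollary directly.
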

\begin{proof} Use \cite[Corollary 17.42]{Bauschke-Combettes-2017} (or \cite[Proposition 2.8]{phelps}) and \eqref{e:tilted}.
\end{proof}

\begin{theorem}
Let $f\in \CVF$ and $\inte\dom f^*\neq\varnothing$.
Then the following are equivalent:
\begin{enumerate}
\item\label{i:weak1} $f$ is almost strictly convex.
\item \label{i:weak2} The mapping
$$x^*\mapsto \argmin_{x\in\HH}\big(f(x)-\scal{x^*}{x}\big)$$
is single-valued and strong-to-weak (i.e., norm-to-weak) continuous on $\inte\dom f^*$. Moreover,
\begin{equation*}
(\forall x^*\in\HH\setminus(\inte\dom f^*))\ \argmin_{x\in\HH}\big(f(x)-\scal{x^*}{x}\big)=\varnothing.
\end{equation*}
\end{enumerate}
\end{theorem}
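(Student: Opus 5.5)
The plan is to translate both conditions through the identity \eqref{e:tilted}, namely $\partial f^*(x^*)=\argmin_{x\in\HH}\big(f(x)-\scal{x^*}{x}\big)$. This reduces the theorem to Corollary~\ref{c:dual:diff} (applied under the standing assumption $\inte\dom f^*\neq\varnothing$), together with Lemma~\ref{l:weak:c} for the continuity claim.

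\emph{\ref{i:weak1}$\Rightarrow$\ref{i:weak2}.} Assume $f$ is almost strictly convex.
\begin{itemize}
\item By Corollary~\ref{c:dual:diff}, $f^*$ is G\^ateaux differentiable on $\inte\dom f^*$ and $\partial f^*(x^*)=\varnothing$ for every $x^*\in\HH\setminus(\inte\dom f^*)$.
\item Lemma~\ref{l:weak:c} then shows that the argmin mapping is single-valued and norm-to-weak continuous on $\inte\dom f^*$.
\item The emptiness of $\partial f^*(x^*)$ outside $\inte\dom f^*$, rewritten through \eqref{e:tilted}, is exactly the final clause of \ref{i:weak2}.
\end{itemize}

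\emph{\ref{i:weak2}$\Rightarrow$\ref{i:weak1}.} Through \eqref{e:tilted}, condition \ref{i:weak2} says the following:
\begin{itemize}
\item $\partial f^*(x^*)$ is a singleton for every $x^*\in\inte\dom f^*$;
\item $\partial f^*(x^*)$ is empty for every $x^*\notin\inte\dom f^*$.
\end{itemize}
Hence $\partial f^*$ is at most single-valued, so $f^*$ is almost differentiable in the sense of Definition~\ref{d:almost}. Theorem~\ref{t:almost:diff} then yields that $f$ is almost strictly convex.

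If one prefers to use Corollary~\ref{c:dual:diff} here as well, G\^ateaux differentiability of $f^*$ at interior points follows from single-valuedness of $\partial f^*$ there, by \cite[Theorem 17.18, Corollary 8.39]{Bauschke-Combettes-2017}. This is not needed, since Theorem~\ref{t:almost:diff} already suffices. In particular, the continuity part of \ref{i:weak2} is not used in this direction at all.

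The only point requiring care is the first direction: Lemma~\ref{l:weak:c} needs G\^ateaux differentiability on the whole of $\inte\dom f^*$, and this is supplied by Corollary~\ref{c:dual:diff}, which is where the hypothesis $\inte\dom f^*\neq\varnothing$ is essential. Otherwise the proof is bookkeeping through \eqref{e:tilted}, and I expect no genuine obstacle.
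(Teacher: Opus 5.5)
Your proposal is correct and matches the paper's proof: the forward direction is the same combination of Corollary~\ref{c:dual:diff} and Lemma~\ref{l:weak:c}, translated through \eqref{e:tilted}. For the converse, the paper invokes Corollary~\ref{c:dual:diff}, which tacitly requires first obtaining G\^ateaux differentiability of $f^*$ on $\inte\dom f^*$ from single-valuedness of $\partial f^*$ there; you instead go straight to Theorem~\ref{t:almost:diff}, a slightly shorter route that also makes plain that the continuity clause is not used in that direction.
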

\begin{proof} ``\ref{i:weak1}$\Rightarrow$\ref{i:weak2}":
Combine Lemma~\ref{l:weak:c} and Corollary~\ref{c:dual:diff}.

``\ref{i:weak2}$\Rightarrow$\ref{i:weak1}'': Apply Corollary~\ref{c:dual:diff}.
\end{proof}

Furthermore, one has the following results.
\begin{lemma}\label{l:strong1}
Let $f\in \CVF$ and $\inte\dom f^*\neq\varnothing$.
If $f^*$ is Fr\'echet differentiable on the open convex
set $\inte\dom f^*$, then
the mapping
$$x^*\mapsto \argmin_{x\in\HH}\big(f(x)-\scal{x^*}{x}\big)$$
is single-valued and  continuous (i.e., norm-to-norm) on $\inte\dom f^*$.
\end{lemma}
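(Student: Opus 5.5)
The argument mirrors Lemma~\ref{l:weak:c}, with Fr\'echet differentiability upgrading weak continuity of the gradient to norm continuity. First I identify the argmin mapping with $\partial f^*$ via \eqref{e:tilted}: for every $x^*\in\HH$,
\[
\argmin_{x\in\HH}\big(f(x)-\scal{x^*}{x}\big)=\partial f^*(x^*).
\]
So it suffices to show that $\partial f^*$ is single-valued and norm-to-norm continuous on $\inte\dom f^*$.

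\emph{Single-valuedness.} Since $f^*\in\CVF$, it is continuous on the open convex set $\inte\dom f^*$ by \cite[Corollary 8.39]{Bauschke-Combettes-2017}. Fr\'echet differentiability implies G\^ateaux differentiability, so \cite[Proposition 17.31]{Bauschke-Combettes-2017} (or \cite[Theorem 17.18]{Bauschke-Combettes-2017}) gives $\partial f^*(x^*)=\{\nabla f^*(x^*)\}$ for every $x^*\in\inte\dom f^*$. Hence the argmin mapping coincides with $\nabla f^*$ on $\inte\dom f^*$.

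\emph{Continuity.} It remains to show that $\nabla f^*$ is norm-to-norm continuous on $\inte\dom f^*$. This is the standard fact that a continuous convex function which is Fr\'echet differentiable on an open convex set has a norm-to-norm continuous gradient there; see \cite[Corollary 17.42]{Bauschke-Combettes-2017} or \cite[Proposition 2.8]{phelps}. If a self-contained argument is preferred, I would prove it directly. Fix $x^*\in\inte\dom f^*$ and let $y^*_n\to x^*$ in $\inte\dom f^*$, writing $u_n:=\nabla f^*(y^*_n)$ and $u:=\nabla f^*(x^*)$. By local Lipschitz continuity of $f^*$ near $x^*$, the sequence $(u_n)$ is bounded. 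For any unit vector $h$ and any $t>0$, the subgradient inequality at $y^*_n$ gives
\[
\scal{u_n-u}{th}\leq f^*(x^*+th)-f^*(x^*)-\scal{u}{th}+\big(f^*(x^*)-f^*(y^*_n)-\scal{u_n}{x^*-y^*_n}\big)+\scal{u_n}{x^*-y^*_n}.
\]
Fr\'echet differentiability at $x^*$ makes the first difference $o(t)$ uniformly in $\|h\|=1$. The remaining terms tend to $0$ as $n\to\infty$, using boundedness of $(u_n)$ and continuity of $f^*$. Taking the supremum over $h$ gives
\[
t\|u_n-u\|\leq o(t)+\varepsilon_n,\qquad \varepsilon_n\to 0,
\]
so $\limsup_n\|u_n-u\|\leq o(t)/t$ for every small $t>0$, and therefore $u_n\to u$.

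The main obstacle is the uniform-in-$h$ estimate: the $o(t)$ term must not depend on the direction. This is exactly where Fr\'echet differentiability, rather than G\^ateaux differentiability, is used, and it is what distinguishes this result from Lemma~\ref{l:weak:c}. Combining single-valuedness with continuity of $\nabla f^*$ through \eqref{e:tilted} gives the claim.
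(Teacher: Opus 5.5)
Your proof is correct and follows the paper's route: you identify the argmin map with $\partial f^*$ via \eqref{e:tilted} and invoke norm-to-norm continuity of Fr\'echet gradients. Note that the paper cites \cite[Corollary 17.43]{Bauschke-Combettes-2017} for this step and reserves Corollary 17.42 for the strong-to-weak statement in Lemma~\ref{l:weak:c}; your citation of \cite[Proposition 2.8]{phelps} covers the Fr\'echet case. In your optional self-contained argument, delete the trailing $+\scal{u_n}{x^*-y^*_n}$ in the displayed inequality: it cancels the matching term inside the bracket, and the resulting bound does not follow from the subgradient inequality. Since the remaining $n$-dependent terms do not involve $h$ and tend to $0$, your conclusion is unaffected.
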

\begin{proof}
Use \cite[Corollary 17.43]{Bauschke-Combettes-2017} (or \cite[Proposition 2.8]{phelps}) and \eqref{e:tilted}.
\end{proof}
\begin{lemma}\label{l:strong2}
Let $f\in \CVF$ and $\inte\dom f^*\neq\varnothing$.
Suppose that $f^*$ is Fr\'echet differentiable on the open convex set $\inte\dom f^*$ and that $\nabla f^*$ is locally (or globally) Lipschitz. Then
the mapping
$$x^*\mapsto \argmin_{x\in\HH}\big(f(x)-\scal{x^*}{x}\big)$$
is single-valued and locally (or globally) Lipschitz on $\inte\dom f^*$.
\end{lemma}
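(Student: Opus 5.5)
The argument goes through the tilt identity \eqref{e:tilted}, exactly as in Lemmas~\ref{l:weak:c} and \ref{l:strong1}. For every $x^*\in\HH$,
\[
\partial f^*(x^*)=\argmin_{x\in\HH}\big(f(x)-\scal{x^*}{x}\big).
\]
So it suffices to show that on $\inte\dom f^*$ the map $\partial f^*$ is single-valued, equal to $\nabla f^*$, and locally (respectively globally) Lipschitz there. The last property is exactly the hypothesis on $\nabla f^*$.

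\emph{Single-valuedness.} Since $f^*\in\CVF$ and $\inte\dom f^*$ is a nonempty open convex set, $f^*$ is continuous there by \cite[Corollary~8.39]{Bauschke-Combettes-2017}. Hence $\partial f^*(x^*)\neq\varnothing$ for every $x^*\in\inte\dom f^*$. Since $f^*$ is Fr\'echet (in particular G\^ateaux) differentiable at such $x^*$, \cite[Proposition~17.31]{Bauschke-Combettes-2017} (or the argument behind \cite[Theorem~17.18]{Bauschke-Combettes-2017}) gives
\[
\partial f^*(x^*)=\{\nabla f^*(x^*)\}.
\]
Combining this with \eqref{e:tilted}, the tilted argmin map coincides with $\nabla f^*$ on $\inte\dom f^*$ and is single-valued there.

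\emph{Lipschitz property.} Suppose $\nabla f^*$ is locally Lipschitz on $\inte\dom f^*$. Then for each $x^*\in\inte\dom f^*$ there are a neighborhood $U\subset\inte\dom f^*$ of $x^*$ and a constant $L\geq 0$ such that
\[
(\forall y^*,z^*\in U)\ \|\nabla f^*(y^*)-\nabla f^*(z^*)\|\leq L\|y^*-z^*\|.
\]
Since the argmin map equals $\nabla f^*$ on $U$, it satisfies the same estimate on $U$. The global case is identical with $U=\inte\dom f^*$.

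The only step needing care is identifying $\partial f^*$ with $\{\nabla f^*\}$ at interior points, that is, the nonemptiness of the subdifferential together with the uniqueness of subgradients at points of G\^ateaux differentiability. Both are standard facts already invoked in Lemma~\ref{l:infinite}, so I expect no real obstacle: everything else is a direct transfer of the hypothesis through \eqref{e:tilted}.
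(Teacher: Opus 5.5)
Your proof is correct and follows the route the paper intends. The paper states this lemma without a separate proof, but, as in Lemmas~\ref{l:weak:c} and \ref{l:strong1}, the argument is \eqref{e:tilted} together with $\partial f^*(x^*)=\{\nabla f^*(x^*)\}$ for $x^*\in\inte\dom f^*$, after which the Lipschitz estimate for $\nabla f^*$ transfers verbatim to the argmin map. As a minor remark, the continuity step is not needed: nonemptiness of $\partial f^*(x^*)$ already follows from G\^ateaux differentiability, since $f^*(y^*)-f^*(x^*)\geq (f^*)'(x^*;y^*-x^*)=\scal{\nabla f^*(x^*)}{y^*-x^*}$.
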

Of course, all these will lead to other stronger types of almost strictly convex functions.
We conclude this paper with an open problem.
 %\begin{enumerate}[label=\textbf{P\arabic*},ref=\arabic*,leftmargin=*]\item
    \begin{openprob}
    Characterize those types of almost strictly (or strongly) convex functions given by Lemmas~\ref{l:strong1} and
    \ref{l:strong2}.
    \end{openprob}

\section*{Acknowledgments}
HHB and XW were partially supported by NSERC Discovery Grants. HL was partially supported
by the Grants of NSF China and Chongqing (11991024, 11771064, KJZD-K 202500507).

\section*{Data availability statements}
The authors declare that the data supporting the findings of this study are available within the paper.

% REFERENCES------------------------------------------------------------------

%\appendixpage
%
%\begin{appendices}
%  \crefalias{section}{appsec}
%
%  \section{}\label{app:blowsup}
%
%  For the sake of completeness,
%  we provide the following proof of \cref{l:blowsup}
%  based on
%  \cite[Problem~3.2.43]{Kaczor.Nowak-1}.
%
%    \begin{proof}[Proof of \cref{l:blowsup}]
%    Because
%   ??
%
%    \end{proof}
%
%
%
%\end{appendices}

\end{document}